\DeclareMathOperator{\pa}{pa}
\DeclareMathOperator{\diag}{diag}
\DeclareMathOperator{\trace}{trace}
\DeclareMathOperator{\cov}{Cov}
\DeclareMathOperator{\var}{Var}
\DeclareMathOperator{\Upper}{Upper}
\DeclareMathOperator{\Blockdiag}{Diag}
\definecolor{darkgreen}{rgb}{0,0.4,0}
\definecolor{MyBlue}{rgb}{0,0.08,0.7} 
\definecolor{MyRed}{rgb}{0.85,0.08,0}
\newcommand{\bi}{{\, \color{MyRed}\leftrightarrow}\,}
\newcommand{\toblue}{{\, \color{MyBlue}\to}\,}
\newcommand{\otblue}{{\, \color{MyBlue}\leftarrow}\,}
\renewcommand{\bi}{\leftrightarrow}
\renewcommand{\toblue}{\to}
\renewcommand{\otblue}{\leftarrow}
\newcommand\trans{T}
\newcommand{\lhs}[1]{\text{left}\left({#1}\right)}
\newcommand{\rhs}[1]{\text{right}\left({#1}\right)}
\title[Structural Equation Models]{Algebraic Problems in Structural
  Equation Modeling}   
\author{Mathias Drton}
\address{Department of Statistics\\
            University of Washington
}
\date{September 21, 2016}
\theoremstyle{plain}
\newtheorem{theorem}{Theorem}[section]
\newtheorem{lemma}{Lemma}[section]
\newtheorem{proposition}{Proposition}[section]
\newtheorem{cor}{Corollary}[section]
\theoremstyle{definition}
\newtheorem{definition}{Definition}[section]
\newtheorem{example}{Example}[section]
\newtheorem{ques}{Question}[section]
\theoremstyle{remark}
\newtheorem*{remark}{Remark}
\numberwithin{equation}{section}
\numberwithin{figure}{section}
\newcommand{\ignore}[1]{}
\begin{document}

\begin{abstract}
  The paper gives an overview of recent advances in structural
  equation modeling.  A structural equation model is a multivariate
  statistical model that is determined by a mixed graph, also known as
  a path diagram.  Our focus is on the covariance matrices of linear
  structural equation models.  In the linear case, each covariance is
  a rational function of parameters that are associated to the edges
  and nodes of the graph.  We statistically motivate algebraic
  problems concerning the rational map that parametrizes the
  covariance matrix.  We review combinatorial tools such as the trek
  rule, projection to ancestral sets, and a graph decomposition due to
  Jin Tian.  Building on these tools, we discuss advances in parameter
  identification, i.e., the study of (generic) injectivity of the
  parametrization, and explain recent results on determinantal
  relations among the covariances.  The paper is based on lectures
  given at the 8th Mathematical Society of Japan Seasonal Institute.
\end{abstract}
\keywords{Algebraic statistics, covariance matrix, Gaussian
  distribution, graphical model, Gr\"obner basis, structural equation
  model}

\maketitle

\setcounter{tocdepth}{1}
\begin{small}
  \vspace{-.4cm}
  \tableofcontents
\end{small}
\clearpage

\part{Structural Equation Models and Questions of Interest}
\addtocontents{toc}{\vspace{.02cm}}{}{}

\section{Motivation}
\label{sec:stat-motiv}

The following example serves well to introduce the statistical models
we will consider.  It features the simplest instance of what
is known as an instrumental variable model.  An empirical study that
shows this type of model `in action' can be found in
\cite{evans:1999}.

\begin{example}
  \label{ex:iv}
  Does a mother's smoking during pregnancy harm the baby?  To answer
  this question researchers conduct a study in which they record, for
  a sample of pregnancies, the baby's birth weight and the average
  number of cigarettes the mom smoked per day during the first
  trimester.  The researchers observe a significant negative
  correlation between the \emph{birth weight} and \emph{smoking} and
  are tempted to conclude that smoking has a negative effect on the
  baby's health, with an increase in the number of cigarettes smoked
  leading to lower birth weight.

  The cigarette companies are not suprised by this finding.  They
  argue, however, that smoking does not harm baby.  Instead, heavier
  smoking is merely a reflection of underlying factors that are the
  true causes of low birth weight.  Such confounding variables could,
  for instance, be of socio-economic nature.  In the context of the
  smoking-lung cancer debate, prominent Statistician Ronald Fisher
  liked to argue that correlations can be attributed to unobserved
  variables of genetic nature \cite{Stolley:1991}.

  Familiar with this type of counter-argument, the researchers
  cleverly recorded a third variable: The \emph{tax rate} on tobacco
  products in the local jurisdictions of the mothers in the sample.
  It is not unreasonable to assume that the tax rate does not have a
  direct effect on the baby's health.  If there is then variation in
  the tax rate and higher taxes have an effect on the amount of
  smoking, then the effect that smoking has on birth weight can
  be estimated in a model that allows for the presence of unobserved
  confounders, as we will see shortly.

  The above narrative suggests a number of cause-effect relations, as
  well as the absence thereof.  Qualitatively these are
  summarized in the graph in Figure~\ref{fig:iv:dag}.  The
  variables in play are the nodes of the graph and cause-effect
  relationships are indicated as directed edges.  The variable $U$
  represents a confounding variable and is unobserved, which we
  emphasize by coloring its edges in red.

  Structural equation models turn the qualitative descriptions of
  causes and effects into quantified functional relationships.  In
  this article, the functional relationships will always be
  \emph{linear}.  The linear structural equation model for the present
  example is based on the following system of \emph{structural
    equations}:
    \begin{align}
      \label{eq:ivX1}
       X_1 &\;=\;  \lambda_{01}
      \phantom{{}\,+\,{\color{MyBlue}\lambda_{12}}X_1\,+
      \,{\color{MyRed}\lambda_{u3}}U}
      \,+\,\varepsilon_1,\\
      \label{eq:ivX2}
      X_2 &\;=\; 
      \lambda_{02}\,+\,{\color{MyBlue}\lambda_{12}}X_1\,+
      \,{\color{MyRed}\lambda_{u2}}U 
      \,+\,\varepsilon_2,\\ 
      \label{eq:ivX3}
      X_3 &\;=\; 
      \lambda_{03}\,+\,{\color{MyBlue}\lambda_{23}}X_2\,+\,
      {\color{MyRed}\lambda_{u3}}U 
      \,+\,\varepsilon_3,\\ 
      \label{eq:ivU}
            U &\;=\;  \lambda_{0u}
      \phantom{{}\,+\,{\color{MyBlue}\lambda_{12}}X_1\,+
      \,{\color{MyRed}\lambda_{u3}}U}
      \,+\,\varepsilon_u.
    \end{align}
    Here, the error terms $\varepsilon_1$, $\varepsilon_2$,
    $\varepsilon_3$, $\varepsilon_u$ are independent random variables
    with zero mean.  The eight coefficients $\lambda_{01}$,
    $\lambda_{02}$, $\lambda_{03}$, $\lambda_{0u}$,
    $\color{MyBlue}\lambda_{12}$, $\color{MyBlue}\lambda_{23}$,
    $\color{MyRed}\lambda_{u2}$, and $\color{MyRed}\lambda_{u3}$ are
    unknown parameters.  Equation~(\ref{eq:ivX1}) indicates that
    variable $X_1$, the tax rate, has expectation $\lambda_{01}$, from
    which it deviates according to the distribution assumed for
    $\varepsilon_1$.  The analogous statement for the unobserved
    confounder $U$ is made in~(\ref{eq:ivU}).  In~(\ref{eq:ivX2}), the
    amount of smoking, denoted $X_2$, is modeled to be a linear
    function of the tax rate and independent noise.
    Similarly,~(\ref{eq:ivX3}) introduces birth weight, denoted $X_3$,
    as a noisy linear function of smoking.

    The quantity of primary interest is the coefficient
    $\color{MyBlue}\lambda_{23}$ that quantifies the relationship
    between smoking and birth weight.  Using data, we can estimate the
    joint distribution and, in particular, the covariance matrix of
    the three observed variables $X_1$, $X_2$ and $X_3$.  Because the
    error terms are independent and have zero means, the
    covariance between $X_3$ and $X_1$ is
    \begin{equation}
      \label{eq:iv}
      \cov[X_1,X_3] \;=\; {\color{MyBlue}\lambda_{23}}\,
      \cov[X_1,X_2].
    \end{equation}
    Hence, as long as $\cov[X_1,X_2]\not=0$, statistical
    inference about $\color{MyBlue}\lambda_{23}$ may be based on the
    ratio of the two covariances in~(\ref{eq:iv}).
\end{example}

\begin{figure}[t]
  \centering
  \begin{center}
    \scalebox{0.75}{
      \begin{tikzpicture}[->,>=triangle 45,shorten >=1pt, auto,thick,
        main
        node/.style={rectangle,fill=gray!20,draw,font=\sffamily\bfseries}]
      
        \node[main node,rounded corners] (1) {$X_1:$ Tax Rate};
        \node[main node,rounded corners] (2) [right=1.5cm of 1]
        {$X_2:$ Mom's Smoking}; \node[main node,rounded corners] (3)
        [right=1.5cm of 2] {$X_3:$ Baby's Weight}; \node[main
        node,rounded corners] (4) [above right=1cm and -.7cm of 2]
        {$U:$ Confounder};
      
        \path[color=black!20!blue,every
        node/.style={font=\sffamily\small}] (1) edge node {} (2) (2)
        edge node {} (3); \path[color=black!20!red,every
        node/.style={font=\sffamily\small}] (4) edge node {} (2) (4)
        edge node {} (3);
      \end{tikzpicture}
    }
  \end{center}
  \caption{Directed graph representing an instrumental variable model.}
  \label{fig:iv:dag}
\end{figure}
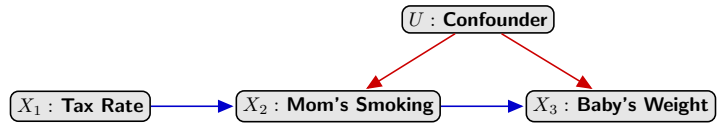

In some applications of structural equation models latent (that is,
unobserved) variables are of direct interest.  For instance, concepts
such as intelligence or depression in psychology are of this nature
and measured only indirectly through other variables such as exam
results or answers in questionnaires.  While problems with explicit
latent variables are ubiquitous \cite{bollen:1989}, we will focus
on models in which the effects of latent variables are summarized and
represented merely in terms of correlations among the error terms in
structural equations.  This representation of dependence induced by
latent variables is discussed in detail in
\cite{MR3338327,koster:2002,pearl:2009,richardson:2002,wermuth:2011}.

\begin{example}
  \label{ex:iv:mixedgraph}
  We take up the instrumental variable model from Example~\ref{ex:iv}.
  The effects of the confounding variable $U$ can be summarized by
  absorbing $U$ into the error terms for equations~(\ref{eq:ivX2})
  and~(\ref{eq:ivX3}).  Define
  \begin{align}
    \label{eq:absorbU}
    {\color{MyRed}\tilde\varepsilon_2} &\;=\; {\color{MyRed}\lambda_{u2}}U 
    {\,+\, \varepsilon_2}, &
    {\color{MyRed}\tilde\varepsilon_3} &\;=\; {\color{MyRed}\lambda_{u3}}U 
    {\,+\, \varepsilon_3}.
  \end{align}
  Retaining only the equations for the observed variables $X_1$,
  $X_2$, and $X_3$, we are left with the equation system:
  \begin{alignat}{2}
    \label{eq:iv:X1mixed}
     X_1 &\;=\; \lambda_{01}
    \phantom{{}\,+\,{\color{MyBlue}\lambda_{12}}X_1}
    \,+\,\varepsilon_1,\\
    \label{eq:iv:X2mixed}
    X_2 &\;=\; 
    \lambda_{02}\,+\,{\color{MyBlue}\lambda_{12}}X_1
    \,+\,{\color{MyRed}\tilde \varepsilon_2}
    ,\\ 
    \label{eq:iv:X3mixed}
    X_3 &\;=\; 
    \lambda_{03}\,+\,{\color{MyBlue}\lambda_{23}}X_2
    \,+\,{\color{MyRed}\tilde \varepsilon_3}
    . 
  \end{alignat}
  However, and this is the significance of the unobserved variable
  $U$, we have now correlated error terms because
  \begin{align}
    \label{eq:iv:w23}
    {\color{MyRed} \omega_{23}} 
    &\;:=\; 
      \cov[{\color{MyRed}
      \tilde\varepsilon_2},{\color{MyRed}\tilde \varepsilon_3}] \;=\;
      \cov[{\color{MyRed}\lambda_{u2}}U +
      \varepsilon_2,{\color{MyRed}\lambda_{u3}}U+ \varepsilon_3]
      \;=\;
      {\color{MyRed}\lambda_{u2}}{\color{MyRed}\lambda_{u3}}\var[U]
      \;\not=\;0.  
  \end{align}
  In the sequel, we will focus on models that are given by equations
  such as~(\ref{eq:iv:X1mixed})-(\ref{eq:iv:X3mixed}), with one
  equation for each observed variable but error terms that may be
  correlated.  Graphically, such models may be represented by a
  \emph{mixed graph} that features directed edges to encode which
  variables appear in each structural equation and bidirected edges
  that indicate possibly nonzero correlations between error terms.
  The mixed graph for the model given
  by~(\ref{eq:iv:X1mixed})-(\ref{eq:iv:X3mixed}) is depicted in
  Figure~\ref{fig:iv:mixed}, which shows the unknown parameters as
  weights for the edges.  At the nodes, we show the variances of the
  error terms, namely, $\omega_{11}=\var[\varepsilon_1]$,
  $\omega_{22}=\var[{\color{MyRed}\tilde\varepsilon_2}]$, and
  $\omega_{33}=\var[{\color{MyRed}\tilde\varepsilon_3}]$.  In the
  statistical literature, the mixed graph for a structural equation
  model is also known as a \emph{path diagram}.

  The ratio $\cov[X_2,X_3]/\var[X_2]$ is the regression
  coefficient when predicting $X_3$ from $X_2$.  We have
  \begin{equation}
    \label{eq:iv:regression-coeff}
    \frac{\cov[X_2,X_3]}{\var[X_2]} \;=\;
    {\color{MyBlue}\lambda_{23}} \;+\; \frac{{\color{MyRed}
        \omega_{23}}}{\var[X_2]}.
  \end{equation}
  Hence, linear regression predicting $X_3$ from $X_2$ only estimates
  the coefficient of interest if ${\color{MyRed} \omega_{23}}=0$, as
  is the case when $X_2$ and $X_3$ do not depend on the latent
  variable $U$.  When ${\color{MyRed} \omega_{23}}\not=0$, the
  relation from~(\ref{eq:iv}), which involves all three variables, is
  needed to recover $\color{MyBlue}\lambda_{23}$.
\end{example}

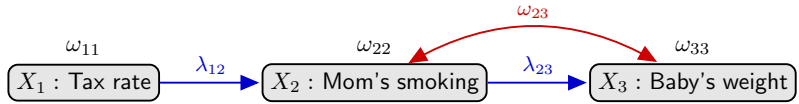
\begin{figure}[t]
  \centering
  \begin{center}
    \scalebox{0.9}{
      \begin{tikzpicture}[->,>=triangle 45,shorten >=1pt, auto,thick,
        main
        node/.style={rectangle,fill=gray!20,draw,font=\sffamily}]
        
        \node[main node,rounded corners] (1) [label=above:{$\omega_{11}$}]{$X_1:$ Tax rate};
        \node[main node,rounded corners] (2) [label=above:{$\omega_{22}$},right=1.5cm of 1]
        {$X_2:$ Mom's smoking}; \node[main node,rounded corners] (3)
        [label=above:{$\omega_{33}$},right=1.5cm of 2] {$X_3:$ Baby's weight};
        
        \path[color=black!20!blue,every
        node/.style={font=\sffamily\small}] (1) edge node
        {$\lambda_{12}$} (2) (2) edge node {$\lambda_{23}$} (3);
        \path[color=black!20!red,<->,every
        node/.style={font=\sffamily\small}, bend left] (2) edge node
        {$\omega_{23}$} (3);
      \end{tikzpicture}
    }
  \end{center}
  \caption{Mixed graph representing an instrumental variable model.}
  \label{fig:iv:mixed}
\end{figure}

The remainder of this paper is organized as follows.
Section~\ref{sec:algebr-comb-nature} introduces linear structural
equation models in full generality.  We then formulate questions of
statistical interest and the algebraic problems they correspond to
(Section~\ref{sec:questions-interest}).  Next, we examine the
interplay between covariance matrices and mixed graphs.  We treat the
so-called trek rule (Section~\ref{sec:trek-rule}) and review useful
results on subgraphs and graph decomposition
(Sections~\ref{sec:induced-subgraphs}
and~\ref{sec:graph-decomposition}).  In Sections~\ref{sec:glob-ident}
and~\ref{sec:gener-ident}, we dive deeper into parameter
identifiability, which here means the question of whether a
coefficient of interest can be recovered from the covariance matrix of
the observed variables.  Finally, we discuss relations among the
entries of the covariance matrix
(Sections~\ref{sec:implicitization}-\ref{sec:verma-constraints}).

\section{Linear Structural Equation
  Models}
\label{sec:algebr-comb-nature}

Let $\varepsilon=(\varepsilon_i:i\in V)$ be a random vector indexed by
a finite set $V$.  Define a new random vector $X=(X_i:i\in V)$ as the
solution to the system of structural equations
\begin{equation}
  \label{eq:sem:general}
  X \;=\; \Lambda^\trans X + \varepsilon,
\end{equation}
where $\Lambda=(\lambda_{ij})\in\mathbb{R}^{V\times V}$ is a matrix of
unknown parameters.  Suppose $\varepsilon$ has covariance matrix
$\Omega=(\omega_{ij})=\var[\varepsilon]$, so $\Omega$ is a positive
definite matrix whose entries are again unknown parameters.  Write $I$
for the identity matrix.  If $I-\Lambda$ is invertible, then the
linear system in~(\ref{eq:sem:general}) is solved uniquely by
$X = (I-\Lambda)^{-\trans}\varepsilon$, which has covariance matrix
\begin{equation}
  \var[X] \;=\; (I - \Lambda)^{-\trans} \Omega (I -
  \Lambda)^{-1} \,\;=:\,\; \phi(\Lambda,
  \Omega).\label{eq:VarX}
\end{equation}
Interesting settings are obtained by restricting the support of
$\Lambda$ and $\Omega$, as is the case in our motivating example.

\begin{example}
  \label{ex:iv:mixedgraph:takeup}
  Consider the setup from Example~\ref{ex:iv:mixedgraph}.  If the
  equation system from~(\ref{eq:iv:X1mixed})-(\ref{eq:iv:X3mixed}) is
  written in vector form as in~(\ref{eq:sem:general}), then the coefficient
  matrix is
  \begin{align}
      \Lambda &=
      \begin{pmatrix}
        0&\lambda_{12}&0\\
        0&0&\lambda_{23}\\
        0&0&0
      \end{pmatrix}.
  \end{align}
  The error covariance matrix is
  \begin{align}
    \Omega &= \var[\varepsilon] =
      \begin{pmatrix}
        \omega_{11} & 0&0\\
        0&\omega_{22} & {\omega_{23}}\\
        0&{\omega_{23}} & \omega_{33}
      \end{pmatrix}.
  \end{align}
  From~(\ref{eq:VarX}), the covariance matrix of 
  $X=(X_1,X_2,X_3)$ is found to be
  \begin{equation}
    \label{eq:iv:Sigma}
    \var[X] \;=\; \begin{pmatrix}
      \omega_{11} & \omega_{11}\lambda_{12} & \omega_{11}\lambda_{12}
      \lambda_{23} \\
      \omega_{11}\lambda_{12} & \omega_{22} +
      \omega_{11}\lambda_{12}^2 & \omega_{23} + 
      \lambda_{23}\sigma_{22}  \\
      \omega_{11}\lambda_{12}\lambda_{23} & \omega_{23} + 
      \lambda_{23}\sigma_{22}& \omega_{33} + 2\omega_{23}\lambda_{23}
      + \lambda_{23}^2\sigma_{22} 
    \end{pmatrix},
  \end{equation}
  where $\sigma_{22}$ denotes the $(2,2)$ entry of $\var[X]$.  The
  relation from~(\ref{eq:iv}) can be confirmed from the $(1,2)$ and
  $(2,3)$ entry of $\var[X]$.
\end{example}

Restrictions on the support of a matrix naturally correspond to a
graph.  Specifically, we adopt mixed graphs because we are dealing
with two matrices, $\Lambda$ and $\Omega$, whose rows and columns are
indexed by the same set $V$.  In structural equation modeling, this
point of view originated in the work of Sewall Wright
\cite{wright:1921,wright:1934}.

A mixed graph with vertex set $V$ is a triple $G=(V,D,B)$ where
$D,B\subseteq V\times V$ are two sets of edges.  The set $D$ comprises
ordered pairs $(i,j)$ that we also denote by $i\toblue j$ to visualize
the fact that such a pair encodes a directed edge pointing from $i$ to
$j$.  Then $i$ is the tail and $j$ is the head of the egde.  The pairs
in $B$ are unordered pairs $\{i,j\}$ that encode bidirected edges that
we also denote by $i\bi j$.  These edges have no orientiation, and
$i\bi j\in B$ if and only if $j\bi i\in B$.  It will be convenient to
call both endpoints $i$ and $j$ heads of $i\bi j$.  In our context,
neither the bidirected part $(V,B)$ nor the directed part $(V,D)$
contain loops, that is, $i\toblue i\not\in D$ and $i\bi i\not\in B$
for all $i\in V$.  If  $(V,D)$ does not contain any
directed cycles $i\toblue\dots\toblue i$, then the mixed graph $G$ is
said to be acyclic.

Let $\mathbb{R}^D$ be the set of real $V\times V$-matrices
$\Lambda=(\lambda_{ij})$ with support in $D$, that is, 
\begin{equation}
  \label{eq:R^D}
      \mathbb{R}^D = \big\{\,\Lambda \in \mathbb{R}^{V
      \times V} : \lambda_{ij} = 0 \ \text{ if } \ i \toblue j \notin D
    \,\big\}.
\end{equation}
Define $\mathbb{R}^D_\mathrm{reg}$ to be the subset of matrices
$\Lambda\in\mathbb{R}^D$ for which $I-\Lambda$ is invertible.  If $G$
is acyclic, then there is a permutation of $V$ that makes $I-\Lambda$
unit upper triangular such that $\det(I-\Lambda)=1$ for all
$\Lambda\in\mathbb{R}^D$ and thus
$\mathbb{R}^D=\mathbb{R}^D_\mathrm{reg}$.  Similarly, let
$\mathit{PD}_V$ be the cone of positive definite symmetric
$V\times V$-matrices $\Omega=(\omega_{ij})$, and define
$\mathit{PD}(B)$ to be the subcone of matrices supported over $B$, that
is, 
\begin{equation}
  \label{eq:PDB}
  \mathit{PD}(B) = \big\{\, \Omega \in \mathit{PD}_V : \omega_{ij} = 0
  \ \text{ if } \
  i \neq j \mbox{ and } i \bi j \notin B\,\big\}.
\end{equation}

Taking the error vector $\varepsilon$ to be Gaussian (or in other
words, to follow a multivariate normal distribution), we arrive at the
following definition of a statistical model for the random vector $X$
that solves~(\ref{eq:sem:general}).  Readers looking for background
such as the fact that linear transformations of a Gaussian random
vector are Gaussian may consult a textbook on multivariate statistics,
e.g., \cite{MR1990662}.

\begin{definition}
  \label{def:sem}
  The \emph{linear structural equation model} given by a mixed graph
  $G=(V,D,B)$ is the family of all multivariate normal distributions
  on $\mathbb{R}^V$ with covariance matrix in the set
  \[
    \mathcal{M}_G \;=\; \left\{ (I-\Lambda)^{-T}\Omega(I-\Lambda)^{-1}
      : \Lambda\in\mathbb{R}^D_\mathrm{reg}, \,
      \Omega\in\mathit{PD}(B)\right\}.
  \]
  The \emph{covariance parametrization} of the model is the map
  \begin{align*}
    \phi_G :  \mathbb{R}^{D} \times \mathit{PD}(B) &\to \mathit{PD}_V, \qquad
    (\Lambda, \Omega) \mapsto (I - \Lambda)^{-\trans} \Omega (I -
                        \Lambda)^{-1},
  \end{align*}
  for which we define the \emph{fiber} of a pair
  $(\Lambda,\Omega)\in\mathbb{R}^D_\mathrm{reg}\times \mathit{PD}(B)$
  to be the preimage
  \begin{equation}
    \label{eq:fiber}
    \mathcal{F}_G(\Lambda,\Omega) \;=\;
    \left\{ 
      (\Lambda',\Omega')\in\mathbb{R}^D_\mathrm{reg}\times \mathit{PD}(B) \::\:
      \phi_G(\Lambda',\Omega')=\phi_G(\Lambda,\Omega)
    \right\}.
  \end{equation}
\end{definition}

As defined, a linear structural equation model does not impose any
restrictions on the mean vector of the normal distributions.
Consequently, the mean vector plays no role in our discussion.  For
instance, in maximum likelihood estimation we may assume without loss
of generality that the mean vector is zero.  Other questions we
consider will directly concern the covariance matrices of the model.
Therefore, we may safely identify a linear structural equation model
with its set of covariance matrices $\mathcal{M}_G$.  On occasion, we
will simply refer to $\mathcal{M}_G$ as the model.

Leaving statistics out of the picture, our interest is in the maps
$\phi_G$, their fibers $\mathcal{F}_G$ and their images
$\mathcal{M}_G$.  Algebra comes into play naturally.

\begin{proposition}
  For any mixed graph $G$, the map $\phi_G$ is a rational map whose
  image $\mathcal{M}_G$ and fibers $\mathcal{F}_G(\Lambda,\Omega)$ are
  semi-algebraic sets.  The map $\phi_G$ is a polynomial map if and
  only if $G$ is acyclic.
\end{proposition}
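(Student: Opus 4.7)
The plan has three threads: verify rationality of $\phi_G$ via Cramer's rule, settle the polynomial-versus-rational dichotomy by a nilpotency argument in the acyclic case and a blow-up argument in the cyclic case, and then read off the semi-algebraic claims from a polynomial reformulation of $\phi_G$ via Tarski--Seidenberg. For rationality, write $(I-\Lambda)^{-1}$ as its adjugate divided by $\det(I-\Lambda)$; every entry of $\phi_G(\Lambda,\Omega)$ is then a polynomial in $(\Lambda,\Omega)$ divided by $\det(I-\Lambda)^2$, and this common denominator is nonzero by the very definition of $\mathbb{R}^D_{\mathrm{reg}}$.

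For the equivalence between polynomiality of $\phi_G$ and acyclicity of $G$, the acyclic direction is straightforward: as noted in the paper, a topological ordering of $V$ makes every $\Lambda\in\mathbb{R}^D$ strictly triangular, so $\Lambda^{|V|}=0$ identically on $\mathbb{R}^D$ and the finite geometric series $(I-\Lambda)^{-1}=\sum_{k=0}^{|V|-1}\Lambda^k$ exhibits $\phi_G$ as a polynomial map. For the converse I would argue by contrapositive: if $G$ contains a directed cycle $i_1\toblue\cdots\toblue i_k\toblue i_1$, set the $k$ cycle weights to $1$ and all other entries of $\Lambda$ to $0$. The only non-zero terms in the Leibniz expansion of $\det(I-\Lambda_0)$ come from the identity permutation and from the $k$-cycle, and these contribute $+1$ and $-1$ respectively, so $I-\Lambda_0$ is singular. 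Now take $\Omega=I\in\mathit{PD}(B)$ and approach $\Lambda_0$ along a path in $\mathbb{R}^D_{\mathrm{reg}}$: the scalar $\trace\bigl((I-\Lambda)^{-\trans}(I-\Lambda)^{-1}\bigr)$ equals the squared Frobenius norm of $(I-\Lambda)^{-1}$ and therefore diverges, so $\phi_G(\Lambda,I)$ is unbounded on a bounded neighborhood of $\Lambda_0$ and $\phi_G$ cannot be polynomial.

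Finally, semi-algebraicity is most cleanly argued by moving from the rational map $\phi_G$ to the polynomial identity $(I-\Lambda)^{\trans}\Sigma(I-\Lambda)=\Omega$, which is equivalent to $\Sigma=\phi_G(\Lambda,\Omega)$ on $\mathbb{R}^D_{\mathrm{reg}}\times\mathit{PD}(B)$. The domain itself is semi-algebraic: $\mathbb{R}^D$ is a linear subspace, $\mathit{PD}(B)$ is cut out of $\mathbb{R}^{V\times V}$ by the linear equations $\omega_{ij}=0$ for $i\neq j,\ i\bi j\notin B$ together with the positivity of the leading principal minors of $\Omega$, and $\mathbb{R}^D_{\mathrm{reg}}=\{\det(I-\Lambda)\neq 0\}$. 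The graph of $\phi_G$ is therefore the intersection of an algebraic set in $(\Lambda,\Omega,\Sigma)$ with a semi-algebraic set, so its projection onto the $\Sigma$-coordinate is $\mathcal{M}_G$ and its slice at a fixed target value is a fiber $\mathcal{F}_G(\Lambda,\Omega)$; Tarski--Seidenberg makes both semi-algebraic. The main obstacle in the whole argument is the converse of the polynomial/acyclic equivalence, since a non-constant $\det(I-\Lambda)$ does not by itself preclude cancellation with numerators; the choice $\Omega=I$ sidesteps this by producing a positive combination of squared entries of $(I-\Lambda)^{-1}$, so that the blow-up is genuine and no cancellation can occur.
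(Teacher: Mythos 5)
Your proof is correct, and it diverges from the paper's in one substantive place: the argument that a cyclic $G$ cannot have polynomial $\phi_G$. The paper handles this direction purely algebraically and rather tersely, by noting that $\det(I-\Lambda)$ is a non-constant polynomial whose Leibniz terms correspond to collections of disjoint directed cycles (citing Harary); it leaves implicit why a non-constant denominator cannot be cancelled by the numerators. You instead give an explicit analytic witness: setting the cycle weights to $1$ and all other entries to $0$ makes $\det(I-\Lambda_0)=1-1=0$ (your accounting of the two surviving Leibniz terms is right), and with $\Omega=I\in\mathit{PD}(B)$ the quantity $\trace\bigl((I-\Lambda)^{-\trans}(I-\Lambda)^{-1}\bigr)=\Vert(I-\Lambda)^{-1}\Vert_F^2$ must blow up along any path in $\mathbb{R}^D_{\mathrm{reg}}$ approaching $\Lambda_0$, so $\phi_G$ is unbounded on a bounded piece of its domain and cannot agree with a polynomial map there. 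This buys a self-contained, cancellation-proof argument at the cost of a small analytic detour, whereas the paper's route is shorter and records the combinatorial meaning of the denominator, which is reused elsewhere (e.g.\ in the trek-rule discussion of cyclic graphs). Your remaining steps -- Cramer's rule with denominator $\det(I-\Lambda)^2$, nilpotency of $\Lambda$ under a topological order for the acyclic direction (the paper equivalently uses $\det(I-\Lambda)=1$ plus the adjugate), and semi-algebraicity of domain, graph, fibers and image via the bilinear reformulation $(I-\Lambda)^{\trans}\Sigma(I-\Lambda)=\Omega$ and Tarski--Seidenberg -- match the paper's proof in substance, with your description of the graph of $\phi_G$ being slightly more explicit than what the paper writes down.
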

\begin{proof}
  That $\phi_G$ is rational follows from Cramer's rule for matrix
  inversion.  The domain of $\phi_G$ is a semi-algebraic set and, thus,
  the fibers $\mathcal{F}_G(\Lambda,\Omega)$ are semi-algebraic as
  well.  The Tarski-Seidenberg theorem implies that $\mathcal{M}_G$ is
  semi-algebraic.  If $G=(V,D,B)$ is acyclic, then $\det(I-\Lambda)=1$
  for all $\Lambda\in\mathbb{R}^D$.  Consequently, the entries of
  $(I-\Lambda)^{-1}$ are polynomial in $\Lambda$.  If $G$ is not
  acyclic, then $\det(I-\Lambda)$ is a non-constant polynomial.  The
  Leibniz formula shows that its terms correspond to collections of
  disjoint directed cycles in the graph; compare Theorem 1 in
  \cite{harary:1962}.
\end{proof}


\begin{figure}[t]
  \centering
    \scalebox{0.9}{
      \begin{tikzpicture}[->,>=triangle 45, shorten >=0pt, auto,thick,
        main node/.style={circle,inner
          sep=2pt,fill=gray!20,draw,font=\sffamily}]
        
        \node[main node] (1) {1}; \node[main node] (2) [below
        right=1.25cm and 1.5cm of 1] {2}; \node[main node] (3) [right=
        3.5cm of 1] {3}; \node[main node] (4) [below 
        right=1.25cm and 1.5cm of 3] {4};
        
        \path[color=black!20!blue,every
        node/.style={font=\sffamily\small}] (1) edge node {$\color{MyBlue}\lambda_{12}$} (2) (1)
        edge node {$\color{MyBlue}\lambda_{13}$} (3) (2) edge node {$\color{MyBlue}\lambda_{23}$} (3) (3) edge node {$\color{MyBlue}\lambda_{34}$} (4);
        \path[color=black!20!red,<->,every
        node/.style={font=\sffamily\small}] (2) edge node {$\color{MyRed}\omega_{24}$} (4);
      \end{tikzpicture}
    }
    \caption{An acyclic mixed graph known as the Verma graph.}
  \label{fig:recap}
\end{figure}
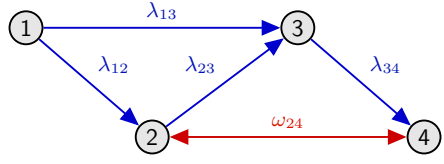

\begin{example}
  \label{ex:recap}
  The mixed graph $G=(V,D,B)$ in Figure~\ref{fig:recap}
  encodes the structural equations
  \begin{center}
    \begin{tabular}{@{}>{$}r<{$}>{$}l<{$}>{$}l<{$}@{}}
      X_1   & =\;\; \lambda_{01}    & \,+\,\varepsilon_1,\\
      X_2   & =\;\; \lambda_{02}\,+\,{\lambda_{12}}X_1  
                                    & \,+\,{\varepsilon_2}\\ 
      X_3   & =\;\; \lambda_{03}\,+\,{\lambda_{13}}X_1 \,+\,{\lambda_{23}}X_2  & \,+\,\varepsilon_3\\
      X_4   & =\;\; \lambda_{04}\,+\,{\lambda_{34}}X_3     & \,+\,{\varepsilon_4}.
    \end{tabular}
  \end{center}
  Only the errors ${\varepsilon_2}$ and
  ${\varepsilon_4}$ may be dependent and
  the error covariance matrix is
  \[
    \Omega \;=\;
    \begin{pmatrix}
      \omega_{11} & 0 &0&0\\
      0&\omega_{22}&0&\omega_{24}\\
      0&0&\omega_{33}&0\\
      0&\omega_{24}&0&\omega_{44}
    \end{pmatrix}.
  \]
  Substracting the coefficient matrix from the identity gives
  \[
    I-\Lambda\;=\;
    \begin{pmatrix}
      1 & -\lambda_{12} & -\lambda_{13} & 0 \\
      0 & 1 & -\lambda_{23} & 0 \\
      0 & 0 & 1 & -\lambda_{34} \\
      0 & 0 & 0 & 1
    \end{pmatrix}
  \]
  with $\det(I-\Lambda)=1$ and inverse
  \[
    (I- \Lambda)^{-1} = 
    \begin{pmatrix}
      1 & \lambda_{12} & \lambda_{13} + \lambda_{12}\lambda_{23} & \lambda_{13} \lambda_{34} + \lambda_{12}\lambda_{23}\lambda_{34} \\
      0 & 1 & \lambda_{23} &  \lambda_{23}\lambda_{34} \\
      0 & 0 & 1 & \lambda_{34} \\
      0 & 0 & 0 & 1
    \end{pmatrix}.
  \]
  To illustrate the form of the map $\phi_G$, we display the coordinate function
  \begin{equation}
     \label{eq:recap-24}
     \phi_G(\Lambda,\Omega)_{24} 
     \;=\;
     \lambda_{12}\lambda_{13} \lambda_{34}\omega_{11} +
     \lambda_{12}^2 \lambda_{23} \lambda_{34}\omega_{11} +
     \lambda_{23} \lambda_{34}\omega_{22} +
     \omega_{24}.
   \end{equation}
\end{example}


\section{Questions of Interest}
\label{sec:questions-interest}

Structural equation models are used to empirically estimate, test and
possibly discover cause-effect relationships among a set of variables.
In estimation and testing, the underlying graph is given.  In
discovery, we seek to estimate the underlying graph, or in other
words, perform model selection.  In this section we give a broad overview
of algebraic problems that arise in the context of these statistical
tasks.  Only some of the problems are treated in the remainder of the
paper, in which we focus on parameter identifiability and polynomial
relations between covariances.

\subsection{Parameter identification}

When considering the model given by the mixed graph $G=(V,D,B)$, a
first question is whether the effects of interest are
\emph{identifiable}, that is, whether they are determined by the
joint distribution of the observed variables.  The importance of the
question is clear: The joint distribution is what can be estimated
from data.  In our setting of linear and Gaussian models, the problem
is equivalent to deciding whether the coefficients $\lambda_{ij}$ in
the linear structural equations can be recovered from the covariance
matrix of the variables.

Different notions of parameter identifiability translate into related
but slightly different algebraic problems.  The most stringent
identifiability property of a model is to have all of its coefficients
$\lambda_{ij}$, $i\to j\in D$, identifiable.  In this case, we seek to
answer the following:

\begin{ques}
  \label{qu:glob-id}
  Is the map $\phi_G$ is injective?
\end{ques}

Injectivity of $\phi_G$ can be decided efficiently, as we will discuss
in Section~\ref{sec:glob-ident}.  However, injectivity can be too
strong of a requirement because all fibers are required to be
singletons with $\mathcal{F}_G(\Lambda,\Omega)=\{(\Lambda,\Omega)\}$.
Indeed, some interesting examples have fibers that are not singletons.

\begin{example}
  \label{ex:iv-generic}
  The map $\phi_G$ fails to be injective when $G$ is the graph for the
  instrumental variable model from Example~\ref{ex:iv:mixedgraph}.
  The relation from~(\ref{eq:iv}) shows that
  $\mathcal{F}_G(\Lambda,\Omega)=\{(\Lambda,\Omega)\}$ if
  $\lambda_{12}\not=0$.  If $\lambda_{12}=0$, however, then the fiber
  is infinite.  Hence, all model parameters are identifiable as long
  as $\lambda_{12}\not=0$.  In the context of
  Example~\ref{ex:iv:mixedgraph}, this requires making an argument
  that higher tax rates impact the amount of smoking.
\end{example}

In the example just given,
$\mathcal{F}_G(\Lambda,\Omega)=\{(\Lambda,\Omega)\}$ for generic
choices of
$(\Lambda,\Omega)\in\mathbb{R}^D_\mathrm{reg}\times \mathit{PD}(B)$.
In this case, we call $\phi_G$ \emph{generically injective}.  We are
led to:
\begin{ques}
  \label{qu:gen-id}
  Is the map $\phi_G$ is generically injective?
\end{ques}

It turns out that generic injectivity is more difficult to decide.
The computational complexity of the problem has not yet been
determined.  In Section~\ref{sec:gener-ident}, we review methods to
decide whether $\phi_G$ is generically injective as well as methods to
decide when the fibers are generically infinite.

When $\phi_G$ is not generically injective, its generic fibers may be
discrete sets.  This property is known as \emph{local identifiability}
in the statistical literature.  We will instead speak of $\phi_G$
being \emph{generically finite-to-one} to highlight that in our case a
discrete fiber is in fact finite because $\phi_G$ is rational.  By
the inverse function theorem, the question of whether $\phi_G$ is
generically finite-to-one is the same as:
\begin{ques}
  \label{qu:rank}
  Does the Jacobian of $\phi_G$ have full column rank?
\end{ques}

The fiber $\mathcal{F}_G(\Lambda,\Omega)$ is defined by the equation
system $\phi_G(\Lambda',\Omega')=\phi_G(\Lambda,\Omega)$.  These
equation systems have a generic number of complex solutions (i.e., the
free entries of $\Lambda$ and $\Omega$ are allowed to be complex
numbers).  

\begin{definition}
  \label{def:id-degree}
  The map $\phi_G$ is algebraically $k$-to-one if the equation systems
  defining its fibers generically have $k$ complex solutions.  We call the
  number $k$ the algebraic degree of identifiability of $G$.
\end{definition}

The degree of identifiability may be determined by Gr\"obner basis
methods (see Section~\ref{sec:gener-ident}).  It is finite if and only
if $\phi_G$ is generically finite-to-one.  The main theorem in
Section~\ref{thm:glob-id} shows that if $\phi_G$ is injective then its
inverse is rational, which is the same as $G$ having degree of
identifiability one.  Currently, there are no combinatorial results
about when the degree is finite but larger than one.  Example 8(b) in
\cite{foygel:draisma:drton:2012} has degree $3$ but fibers whose
cardinality over the reals is either one or three.  To the author's
knowledge, no example has been discovered in which $\phi_G$ is
generically injective over the reals but algebraically $k$-to-one for
$k\ge 2$.

An important question that we will not address in detail is the
identifiability of only a single parameter $\lambda_{ij}$ for a
designated edge of interest $i\to j\in D$.  This amounts to checking
whether in every fiber the coefficient for the edge has only a single
value.  In other words, it must hold that
$\lambda_{ij}'=\lambda_{ij}''$ whenever $(\Lambda',\Omega')$ and
$(\Lambda'',\Omega'')$ are in the same fiber.  In
Example~\ref{ex:iv:mixedgraph}, the fiber of a pair $(\Lambda,\Omega)$
with $\lambda_{12}=0$ is infinite but all pairs $(\Lambda',\Omega')$
such a fiber have $\lambda_{12}'=0$.  Two well-known graphical
methods for identifying a single edge coefficient are the back-door and
the front-door criterion \cite{pearl:2009};  see also
\cite{chen2014,garcia:2010}.

\subsection{Model dimension}
\label{subsec:model-dim}

Statistical tests may be used to assess whether a model is compatible
with empirical data.  At an intuitive level, such tests are based on
computing a distance between data and model and comparing this
distance to typical distances that are obtained when data are
generated from a distribution in the model.  For linear Gaussian
models, a test can be thought off as assessing the distance between
the empirical (or sample) covariance matrix and the model
$\mathcal{M}_G$.  Recall that we have defined $\mathcal{M}_G$ as the
set of covariance matrices.

The challenging part of designing a statistical test is to quantify,
in a probabilistic manner, what typical distances between data and
model are.  Many procedures rely on asymptotic approximations that are
obtained by letting the number of data points grow to infinity.  Under
regularity conditions, limiting distribution theory leads to
consideration of so-called chi-square distributions, which are indexed
by an integer parameter.  In our context, when testing the model given
by the mixed graph $G=(V,D,B)$, the chi-square parameter is set equal
to the codimension of $\mathcal{M}_G$, where we think of
$\mathcal{M}_G$ as embedded in the space of symmetric matrices.  This
gives concrete statistical motivation for:
\begin{ques}
  \label{qu:dim}
  What is the dimension of $\mathcal{M}_G$?
\end{ques}

The model $\mathcal{M}_G$ is parametrized by the coefficients and
covariances associated with the edges in $D$ and $B$ as well as the
variances associated with the nodes in $V$.  Being a subset of the space of
$V\times V$ symmetric matrices, $\mathcal{M}_G$ has
expected dimension
\[
  \min\left\{ |V|+|D|+|B|, \frac{|V|(|V|+1)}{2}\right\}.
\]
The term $|D|+|V|+|B|$ is the count of nodes and edges in $G$.  Since
$\mathcal{M}_G$ is the image of $\phi_G$, its actual dimension is
equal to the rank of the Jacobian of $\phi_G$.  A review of the
connection between dimension and Jacobian in a statistical context is
given in \cite{MR1863967}.  Question~\ref{qu:dim} is tied to parameter
identifiability, most directly to Question~\ref{qu:rank}.  If $\phi_G$
is generically finite-to-one, then $\mathcal{M}_G$ has the expected
dimension $|V|+|D|+|B|$.

\subsection{Covariance equivalence}

Different graphs may induce the same statistical model.  For example,
take $V=\{1,2\}$, and let $G_1$ be the graph with the single edge
$1\to 2$.  Let $G_2$ and $G_3$ be the graphs with single edge
$1\leftarrow 2$ and $1\bi 2$, respectively.  Then
$\mathcal{M}_{G_1}=\mathcal{M}_{G_2}=\mathcal{M}_{G_3}$ as each model
is easily seen to be equal to the entire cone of positive definite
$2\times 2$ matrices.  

From an applied perspective, two different graphs $G$ and $G'$ encode
different scientific/causal hypotheses.  If
$\mathcal{M}_G=\mathcal{M}_{G'}$, then the two hypotheses cannot be
distinguished based on data from a linear and Gaussian structural
equation model.  It is thus useful to be able to decide whether two
graphs $G$ and $G'$ are covariance equivalent, that is, we would like to
be able to answer:

\begin{ques}
  \label{qu:model-equivalence}
  When do two maps $\phi_G$ and $\phi_{G'}$ have the same image?
\end{ques}

Existing results addressing this questions make comparisons between
relations among the entries of the matrices in each model, and we
record:

\begin{ques}
  \label{qu:implicit}
  What are the algebraic relations among the coordinates of $\phi_G$?
\end{ques}

Such relations are also of interest for statistical tests that assess
whether the model given by $G$ is compatible with available data; see,
e.g., \cite{bollen2000tetrad,chen2014,MR2458187}.  We review results
on relations among the covariances in
Sections~\ref{sec:implicitization}-\ref{sec:verma-constraints}.  An
important role is played by determinants that represent probabilistic
conditional independence in a Gaussian random vector.  We note that
models can, in principle, also be distinguished using inequality
constraints.  However, as less is know about inequalities, we do not
treat them here.  Examples of models with latent variables for which a
full semi-algebraic description is available can be found in
\cite{drtonyu:2010,zwiernik;2016}.

\begin{remark}
  As defined above, covariance equivalence is based on data that is
  observational, meaning that it is collected by merely observing the
  considered physical system.  The situation is different when
  experimental data is available, meaning, that data is collected in
  different settings in which the system is subject to various
  experimental interventions.  We will not treat such interventional
  data in this paper.  Interested readers may find discussions of the
  problem in \cite{MR3299409,pearl:2009,spirtes:2000}.  Similarly,
  even for observational data, questions of equivalence differ from
  Question~\ref{qu:model-equivalence} in non-linear models or linear
  models with Gaussian errors \cite{ernest:2016,Shimizu14}.
\end{remark}

\subsection{Maximum likelihood}

The parameters of linear structural equation models are most commonly
estimated using the technique of maximum likelihood.  Suppose we
observe a sample $X^{(1)},\dots,X^{(n)}$ drawn independently from the
multivariate normal distribution with mean vector $\mu$ and covariance
matrix $\Sigma$, which we denote by $\mathcal{N}(\mu,\Sigma)$.  The
joint distribution of the random vectors $X^{(1)},\dots,X^{(n)}$ is
the $n$-fold product of $\mathcal{N}(\mu,\Sigma)$.  The likelihood of
the sample is the value of the joint density of the product
distribution at $(X^{(1)},\dots,X^{(n)})$.  The \emph{likelihood
  function} is the function mapping the pair $(\mu,\Sigma)$ to the
likelihood of the sample.  The \emph{maximum likelihood estimator}
(MLE) of $(\mu,\Sigma)$ under the model given by a mixed graph $G$ is
the maximizer of the likelihood function when restricting $\Sigma$ to
be in $\mathcal{M}_G$.

Define the sample mean vector and the sample covariance matrix as
\begin{equation}
  \label{eq:sample-mean-cov}
\bar X_n \;=\; \frac{1}{n}\sum_{i=1}^n X^{(i)} \quad\text{and}\quad S_n
\;=\; \frac{1}{n} (X^{(i)}-\bar X_n)(X^{(i)}-\bar X_n)^T,
\end{equation}
respectively.  It is convenient to treat the likelihood function on
the log-scale.  With an additive constant omitted and $n/2$ divided
out, the \emph{log-likelihood function} is
\begin{equation*}
  (\mu,\Sigma) \;\mapsto\; - \log\det(\Sigma) -
  \trace\left(\Sigma^{-1} S_n\right) - (\bar
  X_n-\mu)^T\Sigma^{-1}(\bar X_n-\mu).
\end{equation*}
Because the considered models place no constraint on the mean vector,
its MLE is always $\bar X_n$.  The MLE of $\Sigma$ solves the
problem of maximizing the function
\begin{equation}\label{eq:likelihood}
  \ell(\Sigma) = - \log\det(\Sigma) -
  \trace\left(\Sigma^{-1} S_n\right)
\end{equation}
subject to $\Sigma\in\mathcal{M}_G$.  Using the covariance
parametrization, the MLE is found by maximizing $\ell\circ \phi_G$.  A
key problem is then understanding the existence and uniqueness of
the MLE.   We record:

\begin{ques}
  \label{qu:mle-exist}
  For which sample covariance matrices $S_n$ does the likelihood
  function $\ell\circ \phi_G$ achieve its maximum?
\end{ques}

Graphical models theory solves Question~\ref{qu:mle-exist} when
$G=(V,D,B)$ is an acyclic digraph, i.e., has $B=\emptyset$ and $D$
without directed cycles \cite{lauritzen:1996}.  More generally, it is
well known that $\ell\circ\phi_G$ is bounded when $S_n$ is positive
definite but this is not necessary \cite{fox:2014}.  An issue that is
not well explored is the fact that even if $\ell\circ\phi_G$ is
bounded it may fail to achieve its maximum as the model
$\mathcal{M}_G$ need not be closed.  For instance, the model in
Example~\ref{ex:two-ivs} is not closed.  We remark that
Question~\ref{qu:mle-exist} is closely related to a positive definite
matrix completion problem that arises in ML estimation for other types
of graphical models \cite{buhl:1993,Sullivant:Gross,MR3014306}.

In some models, the MLE is known to admit a closed-form expression as
a rational function of the data.  Such models have \emph{maximum
  likelihood (ML) degree} equal to one, in the sense of the following:
\begin{ques}
  The MLE of $\Sigma$ in model $\mathcal{M}_G$ is an algebraic
  function of the data.  What is the degree of this function?
\end{ques}

An introduction to the notion of ML degree is given in \cite[Chapter
2]{oberwolfach}.  Here, we merely not that the ML degree is one when
$G$ is an acyclic digraph.  More general models may have higher ML
degree and a log-likelihood function with more than one local maximum.
We exemplify this for a model discussed in detail in
\cite{drton:richardson:2004}.

\begin{figure}[t]
  \centering
  \scalebox{.9}{
    \begin{tikzpicture}[->,>=triangle 45,shorten >=1pt, auto,thick,
      main node/.style={circle,inner
        sep=2pt,fill=gray!20,draw,font=\sffamily}]
          
      \node[main node] (1) {1}; \node[main node] (2) [right = 1.75cm
      of 1] {2}; \node[main node] (3) [right = 1.75cm of 2] {3};
      \node[main node] (4) [right = 1.75cm of 3] {4};
      \path[color=black!20!blue,every
      node/.style={font=\sffamily\small}] (1) edge node[above]
      {$\lambda_{12}$} (2) (4) edge node {$\lambda_{43}$} (3);
      \path[color=black!20!red,<->,every
      node/.style={font=\sffamily\small}] (2) edge (3);
    \end{tikzpicture}
  }
  \caption{Mixed graph for a bivariate seemingly unrelated regressions
    model.}
  \label{fig:sur}
\end{figure}
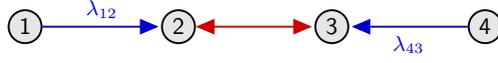
 
\begin{example}
  \label{ex:sur}
  Suppose we have data with sample covariance matrix
  \[
    S_n= \kbordermatrix{ & X_1 & X_2 & X_3 & X_4\cr X_1 & 8 & -5 & 10 &
      3 \cr X_2 &-5 & 27 & 4 & 49 \cr X_3 &10 & 4 & 21 & 24 \cr X_4 &
      3 & 49 & 24 & 114}.
  \]
  The matrix is positive definite such that the log-likelihood
  function $\ell$ from~(\ref{eq:likelihood}) is bounded above on the
  entire cone of positive definite matrices.  More precisely, $\ell$ has
  compact level sets, that is, for any constant $c\in\mathbb{R}$
  the set of positive definite matrices $\Sigma$ with $\ell(\Sigma)\ge
  c$ is compact \cite{MR1990662}.

  Let $G$ be the graph depicted in Figure~\ref{fig:sur}.  It is not
  difficult to show that the parametrization $\phi_G$ admits a
  rational inverse.  Let $\Sigma=(\sigma_{ij})$ satisfy
  $\Sigma=\phi_G(\Lambda,\Omega)$ with
  $\Lambda=(\lambda_{ij})\in\mathbb{R}^D$ and
  $\Omega=(\omega_{ij})\in\mathit{PD}(B)$.  Then
  \begin{align}
    \label{eq:sur-global-id}
    \lambda_{12}&=\frac{\sigma_{12}}{\sigma_{11}}, 
    &
      \lambda_{43} &=\frac{\sigma_{34}}{\sigma_{44}},
  \end{align}
  and the entries of $\Omega=(I-\Lambda)^T\Sigma(I-\Lambda)$ are
  rational functions of $\Sigma$ as well.  All the rational functions
  are defined on the entire cone of positive definite matrices because
  $\sigma_{11},\sigma_{44}>0$.  It is also clear that the considered
  map $\phi_G$ is proper, that is, compact subsets of the positive
  definite cone have compact preimages under $\phi_G$.  It follows
  that $\ell\circ\phi_G$ has compact level sets and, thus,
  achieves its maximum on the open set
  $\mathbb{R}^D\times \mathit{PD}(B)$.

  The critical points of $\ell\circ\phi_G$ satisfy a rational equation
  system, in which the determinant of $\phi_G(\Lambda,\Omega)$ appears
  in the denominator.  Since the directed part of $G$ is acyclic the
  determinant is equal to the determinant of $\Omega$.  Clearing the
  denominator yields a polynomial equation system.  Saturating the
  system with respect to the determinant removes infeasible solutions
  with $\det(\Omega)=0$.  Computing a lexicographic Gr\"obner basis
  after the saturation shows that the critical points
  $(\Lambda,\Omega)$ solve the equation
  \begin{multline*}
    10583160 \,{ \lambda_{12}^5}+43115307\,
    { \lambda_{12}^4}+72738452\,
    {\lambda_{12}^3}\\
    +55482894\, { \lambda_{12}^2}+8437660\,
    { \lambda_{12}}-4703765 \;=\; 0.
  \end{multline*}
  All other entries of $\Lambda$ and also $\Omega$ solve linear
  equations whose coefficients depend on $\lambda_{12}$ and the data.
  We conclude that the MLE of $\Sigma$ is an algebraic function of
  degree 5.  We note that the displayed equation for $\lambda_{12}$
  has three real roots and, thus, is not solvable by
  radicals.
\end{example}





\subsection{Model singularities}

As noted in Section~\ref{subsec:model-dim}, the distributions of test
statistics are frequently approximated using asymptotic theory.  For
so-called likelihood ratio tests, this asymptotic theory can be
thought of as assessing infinitesimal distances between a positive
semidefinite data matrix and the given model $\mathcal{M}_G$.  The
data matrix is generated from a distribution that corresponds to a
particular point in $\mathcal{M}_G$.  At a smooth point of
$\mathcal{M}_G$, the distribution of the infinitesimal distance is
given by a chi-square distribution which is the distribution of the
distance between a Gaussian random vector and a linear space.  At
singular points, the distribution is determined by the tangent cone at
the considered point \cite{drton:lrt}.  Singularities also impact
other approaches such as Wald tests \cite{MR3449776}, and it is
important to clarify:

\begin{ques}
  Is the image of $\phi_G$ a smooth manifold?  If not, what are the
  tangent cones of the image?
\end{ques}

We do not address the question explicitly in this paper.  However,
whenever $\phi_G$ is injective (see Section~\ref{sec:glob-ident}) its
image is smooth.  Indeed, when $\phi_G$ is injective it has a rational
inverse whose domain of definition includes the cone of all positive
definite matrices \cite{drton:2011}.  The next example illustrates
that not all models are smooth.

\begin{figure}[t]
        \centering
        \scalebox{.9}{
          \begin{tikzpicture}[->,>=triangle 45, shorten >=0pt,
            auto,thick,
            main node/.style={circle,inner sep=2pt,
              fill=gray!20,draw,font=\sffamily}] 
            
            \node[main node,rounded corners] (3) 
            {3};
            \node[main node,rounded corners] (1) 
            [above left =0.5cm and 1.5cm of 3] {1}; 
            \node[main node,rounded corners] (2) 
            [below left =0.5cm and 1.5cm of 3] {2}; 
            \node[main node,rounded corners] (4)
            [right=1.5cm of 3] {4};
            
            \path[color=black!20!blue,every node/.style={font=\sffamily\small}] 
            (1) edge node[above] {} (3)
            (1) edge node[above] {} (2)
            (2) edge node[below] {} (3)
            (3) edge node {} (4);
            \path[color=black!20!red,<->,every
            node/.style={font=\sffamily\small},bend left] 
            (3)  edge node {} (4);
          \end{tikzpicture}
        }
  \caption{Mixed graph of a model with two instruments.}
  \label{fig:two-ivs}
\end{figure}
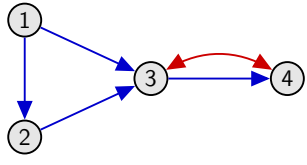

\begin{example}
  \label{ex:two-ivs}
  Consider the mixed graph $G=(V,D,B)$ from Figure~\ref{fig:two-ivs}.
  Let $\Sigma\in\mathbb{R}^{4\times 4}$ be a positive definite matrix.
  Define the matrix
  \begin{align*}
   \Sigma_{\{3,4\}.\{1,2\}} 
    &= \Sigma_{\{3,4\},\{1,2\}}\left(
      \Sigma_{\{1,2\},\{1,2\}}\right)^{-1},
  \end{align*}
  and the Schur complement
  \begin{align*}
   \Sigma_{\{3,4\},\{3,4\}.\{1,2\}} 
    &=  \Sigma_{\{3,4\},\{3,4\}}-\Sigma_{\{3,4\},\{1,2\}}\left(
        \Sigma_{\{1,2\},\{1,2\}}\right)^{-1}\Sigma_{\{1,2\},\{3,4\}}.
  \end{align*}
  Change coordinates to the triple of $2\times 2$ matrices
  \[
    \left(\Sigma_{\{1,2\},\{1,2\}},\; \Sigma_{\{3,4\}.\{1,2\}} ,\;\Sigma_{\{3,4\},\{3,4\}.\{1,2\}} \right).
  \]
  If $\Sigma=\phi_G(\Lambda,\Omega)$ for
  $\Lambda=(\lambda_{ij})\in\mathbb{R}^D$ and
  $\Omega=(\omega_{ij})\in\mathit{PD}(B)$, then
  \begin{align*}
    \Sigma_{\{1,2\},\{1,2\} }
    &=
      \begin{pmatrix}
        \omega_{11} &
        \lambda_{12}\omega_{11}\\
        \lambda_{12}\omega_{11} &\omega_{22}
      \end{pmatrix},\\
    \Sigma_{\{3,4\}.\{1,2\}} 
    &=
      \begin{pmatrix}
        \lambda_{13} & \lambda_{23}\\
        \lambda_{13}\lambda_{34} &\lambda_{23}\lambda_{34}
      \end{pmatrix},\\
    \Sigma_{\{3,4\},\{3,4\}.\{1,2\}} 
    &=
      \begin{pmatrix}
        \omega_{33} & \omega_{34}+\lambda_{34}\omega_{33}\\
        \omega_{34}+\lambda_{34}\omega_{33} &
        \omega_{44}+2\lambda_{34}\omega_{34}+\lambda_{34}^4\omega_{33} 
      \end{pmatrix}.
  \end{align*}
  We observe that $\Sigma$ is in the (topological) closure of
  $\mathcal{M}_G$ if and only if $\Sigma_{\{3,4\}.\{1,2\}}$ is a
  matrix of rank at most one.  In order for $\Sigma$ to be in
  $\mathcal{M}_G$ it must also hold that the second row of
  $\Sigma_{\{3,4\}.\{1,2\}}$ is zero only if the first row is zero.

  Geometrically, the closure of $\mathcal{M}_G$ is 
  equivalent to the product of two cones of positive definite $2\times
  2$ matrices and the set of $2\times 2$
  matrices of rank at most one.  The latter set is singular at the
  zero matrix.  For more details see the related example in
  \cite[Exercise 6.4]{oberwolfach}.
\end{example}

\subsection{Singularities of fibers}

Finally, without going into any detail, we note that it is also of
statistical interest to study the geometry of the fibers
$\mathcal{F}_G(\Lambda,\Omega)$.  In particular, the resolution of
singularities of $\mathcal{F}_G(\Lambda,\Omega)$ is connected to asymptotic
approximations in Bayesian approaches to model selection.  

Bayesian methods assess the goodness-of-fit of a model by integrating
the likelihood function with respect to a prior distribution.  In
models with many parameters, the integration is over a
high-dimensional domain and, thus, constitutes a difficult numerical
problem.  While carefully tuned Monte Carlo methods can be effective,
it can also be useful to invoke asymptotics.  For large sample size
$n$, the integrated likelihood function behaves like a Laplace
integral.  Under regularity conditions, a Laplace approximation can
yield accurate approximations that have been used in many applications
\cite{MR924875}.  However, the models considered here may also lead to
singular Laplace integrals for which asymptotic approximations are
more involved.

Asymptotic expansions for singular Laplace integrals are well-studied
\cite{MR966191}.  The work of Sumio Watanabe brings the ideas to bear
in the statistical context \cite{MR2554932}.  For several practically
relevant settings, it has become tractable to determine or bound the
\emph{real log-canonical threshold} and its multiplicity, which
determine how the integrated likelihood scales with the sample size
$n$.  This information can be used in model selection \cite{sBIC}.
Computing real log-canonical thresholds for data generated under the
distribution with covariance matrix $\phi_G(\Lambda,\Omega)$ requires
careful study of the singularities of the fiber
$\mathcal{F}_G(\Lambda,\Omega)$.  Bounds on the thresholds can be
obtained from cruder information such as the dimension of the fiber.


\addtocontents{toc}{\vspace{-.25cm}}{}{}
\part{Treks, Subgraphs and Decomposition}
\addtocontents{toc}{\vspace{.02cm}}{}{}

\section{Trek Rule}
\label{sec:trek-rule}

In solving the problems from Section~\ref{sec:questions-interest}, it
is desirable to exploit the connection between the covariance
parametrization $\phi_G$ of a structural equation model and the
underlying mixed graph $G=(V,D,B)$.  The trek rule that we present in
this section makes the connection precise and is behind results that
allow one to answer some of the questions we posed with efficient
algorithms.

It is natural to expect the covariance between random variables $X_i$
and $X_j$ to be determined by the semi-walks between the nodes $i$ and
$j$ in the graph $G$.  A semi-walk is an alternating sequence of nodes
from $V$ and edges from either $D$ or $B$ such that the endpoints of
each edge are the nodes immediately preceding and succeeding the edge
in the sequence.  In other words, a semi-walk is a walk that
uses bidirected or directed edges but is allowed to traverse directed
edges in the `wrong direction'.   As we will see, only
special semi-walks contribute to the covariance.

\begin{definition}
  A \emph{trek} $\tau$ from \emph{source} $i$ to \emph{target} $j$ is a
  semi-walk from $i$ to $j$ whose consecutive edges do not have any
  colliding arrowheads.  In other words, $\tau$ is a sequence of the
  form
  \begin{enumerate}
  \item[(a)] $i\leftarrow i_{l}\leftarrow \dots
      \leftarrow i_1\leftarrow
      i_0\longleftrightarrow j_0\to
      j_1\to \dots \to
      j_{r}\to j$, or
      \smallskip
  \item[(b)] $i\leftarrow
                       i_{l}\leftarrow \dots \leftarrow
                       i_1\xleftarrow[]{\hspace{0.74cm}}
                       i_0\xrightarrow[]{\hspace{0.74cm}}  j_1\to
                       \dots \to j_{r}\to j$. 
  \end{enumerate}
  A trek has a \emph{left-} and a \emph{right-hand side}, denoted
  $\lhs{\tau}$ and $\rhs{\tau}$, respectively.  We have
  $\lhs{\tau}=\{i_0,\dots,i_{l},i\}$ and
  $\rhs{\tau}=\{j_0,\dots,j_{r},j\}$ in case (a), and
  $\lhs{\tau}=\{i_0,\dots,i_{l},i\}$ and
  $\rhs{\tau}=\{i_0,j_1,\dots,j_{r},j\}$ in case (b).  In case (b) the
  \emph{top} node $i_0$ belongs to both sides.  A trek $\tau$ from $i$
  to $i$ may have no edges, in which case $i$ is the top node,
  $\lhs{\tau}=\rhs{\tau}=\{i\}$.  We call such a trek trivial.
\end{definition}

In an acyclic graph, if we think of directed edges pointing
`downward', then a trek takes us up and/or down a `mountain'.  
Any directed path is a trek, in which case $|\lhs{\tau}|=1$ or
$|\rhs{\tau}|=1$ depending on the direction in which the path is
traversed.  A trek may contain the same node on both its left- and
right-hand sides.  If the graph contains a cycle, then the left- or
right-hand side of $\tau$ may contain this cycle, possibly repeated.

For a trek $\tau$ that contains no bidirected edge and has top node
$i_0$, define a trek monomial as
\[
\tau(\Lambda,\Omega) = \omega_{i_0i_0}\prod_{k\to l\in \tau} \lambda_{kl}.
\]
For a trek $\tau$ that contains a bidirected edge $i_0\bi j_0$, define
the trek monomial as
\[
\tau(\Lambda,\Omega) = \omega_{i_0j_0}\prod_{k\to l\in \tau} \lambda_{kl}.
\]
The following rule expresses the covariance matrix $\Sigma$ as a
summation over treks \cite{spirtes:2000,wright:1921,wright:1934}.  We
write $\mathcal{T}(i,j)$ for the set of all treks from $i$ to $j$.

\begin{theorem}[Trek rule]
  Let $G=(V,D,B)$ be any mixed graph, and let $\Lambda\in\mathbb{R}^D$
  and $\Omega\in\mathit{PD}(B)$.  Then the covariances are
  \begin{equation}
    \label{eq:trek-rule}
    \phi_G(\Lambda,\Omega)_{ij} = \sum_{\tau\in\mathcal{T}(i,j)}
    \tau(\Lambda,\Omega), \qquad i,j\in V.
  \end{equation}
\end{theorem}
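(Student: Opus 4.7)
The plan is to expand $(I-\Lambda)^{-1}$ as a Neumann series and read off the combinatorial structure. Treat the expansion
\[
(I-\Lambda)^{-1} \;=\; \sum_{k\ge 0} \Lambda^k
\]
as a formal identity (it is a finite sum when $G$ is acyclic, since then $\Lambda$ is nilpotent under a topological ordering of $V$; in the cyclic case the series converges absolutely whenever the spectral radius of $\Lambda$ is less than one, and the resulting identity extends to all of $\mathbb{R}^D_\mathrm{reg}$ by rationality). The key combinatorial ingredient is the standard fact that, for any $k\ge 0$,
\[
(\Lambda^k)_{ji} \;=\; \sum_{j=i_0\to i_1\to\cdots\to i_k=i} \lambda_{i_0 i_1}\lambda_{i_1 i_2}\cdots\lambda_{i_{k-1} i_k},
\]
where the sum ranges over directed walks of length $k$ from $j$ to $i$ in $G$. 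Consequently, $\bigl((I-\Lambda)^{-T}\bigr)_{ik}=\bigl((I-\Lambda)^{-1}\bigr)_{ki}$ is the sum of $\prod_{e\in W}\lambda_e$ over all directed walks $W$ from $k$ to $i$ (including the empty walk when $k=i$, contributing $1$), and similarly for $(I-\Lambda)^{-1}_{lj}$.

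Next I would expand
\[
\phi_G(\Lambda,\Omega)_{ij} \;=\; \sum_{k,l\in V} \bigl((I-\Lambda)^{-T}\bigr)_{ik}\,\omega_{kl}\,\bigl((I-\Lambda)^{-1}\bigr)_{lj}
\]
and substitute the walk expansions for the two $\Lambda$-factors. The bilinear sum splits according to the support of $\Omega$: terms with $k=l$ contribute $\omega_{kk}\cdot\lambda^{W_L}\lambda^{W_R}$ where $W_L$ is a directed walk $k\to i$ and $W_R$ a directed walk $k\to j$, while terms with $k\ne l$ survive only when $k\bi l\in B$ and contribute $\omega_{kl}\cdot\lambda^{W_L}\lambda^{W_R}$ with $W_L$ a walk $k\to i$ and $W_R$ a walk $l\to j$. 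I would then check that concatenating $W_L$ (reversed) with $W_R$ yields exactly a trek from $i$ to $j$: in the $k=l$ case this is a trek of type (b) with top node $k$, and in the $k\ne l$ case a trek of type (a) with top edge $k\bi l$. The weight is precisely the trek monomial $\tau(\Lambda,\Omega)$. Conversely, every trek arises uniquely in this way by splitting it at its top node(s), so the correspondence is a bijection and the sums match term by term.

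The main obstacle is the bookkeeping in the cyclic case, where both the Neumann series and the trek sum are genuinely infinite. Two things must be handled cleanly: first, that the bijection between pairs $(W_L, W_R, \omega_{kl})$ and treks $\tau\in\mathcal{T}(i,j)$ is respected by reversing $W_L$ (since the trek definition lists its left-hand side as a directed path whose edges point \emph{away} from the top, which is exactly $W_L$ read backwards); and second, that the formal identity of power series in the entries of $\Lambda$ and $\Omega$ can be promoted to the claimed equality of rational functions. The latter follows because both sides agree as analytic functions on the open subset $\{\Lambda:\|\Lambda\|<1\}$ and both sides are rational functions on $\mathbb{R}^D_\mathrm{reg}$, so the identity propagates by the identity principle. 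In the acyclic case, no convergence is needed and the proof reduces to a direct finite matching of monomials.
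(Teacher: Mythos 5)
Your proposal is correct and follows essentially the same route as the paper: expand $(I-\Lambda)^{-1}$ as the geometric (Neumann) series, read off its entries as sums over directed walks, and match each product of an entry of $(I-\Lambda)^{-T}$, $\Omega$, and $(I-\Lambda)^{-1}$ with a trek obtained by gluing two directed walks at a common top node (diagonal $\omega_{kk}$) or along a bidirected edge (off-diagonal $\omega_{kl}$). Your extra care about the cyclic case (finite sum when acyclic, convergence/formal-series interpretation otherwise) matches the paper's own remarks accompanying the theorem.
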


Some clarification is in order.  If $G$ is acyclic, then the summation
in~(\ref{eq:trek-rule}) is finite and yields a polynomial.  If $G$
contains a directed cycle, then the right-hand side
of~(\ref{eq:trek-rule}) may yield a power series as shown in
Example~\ref{ex:trek-rule-cyclic} below.  Under assumptions on the
spectrum of $\Lambda$, the power series converges and yields the value
of $\phi_G(\Lambda,\Omega)_{ij}$.  These spectral conditions are also
needed to give cyclic models an interpretation of representing
observation of an equilibrium \cite{fish:corr,Lacerda08}.
This said, it is also useful to treat the series as a formal power
series.  If so desired, a combinatorial description can also be given
for a rational expression for $\phi_G(\Lambda,\Omega)_{ij}$; compare
\cite{MR3044565}.

\begin{proof}[Proof of the trek rule]
  Writing $(I-\Lambda)^{-1}=I+\Lambda+\Lambda^2+\dots$, we observe
  that
  \begin{align}
    \label{eq:directed-paths}
    ((I- \Lambda)^{-1})_{ij} &= \sum_{\tau \in \mathcal{P}(i,j)}
                               \prod_{k\rightarrow l \in \tau} \lambda_{kl},
  \end{align}
  where $\mathcal{P}(i,j)$ is the set of directed paths from $i$ to
  $j$ in $G$.  If $G$ is acyclic, then $\Lambda^m=0$ for all
  $m\ge |V|$, and the geometric series of matrices has only finitely
  many nonzero terms.  If $G$ is cyclic the geometric series is
  infinite and converges if and only if all eigenvalues of $\Lambda$
  have magnitude less than 1.  Now, observe that a product of three
  entries of $(I- \Lambda)^{-T}$, $\Omega$, and $(I- \Lambda)^{-1}$,
  respectively, corresponds to the concatenation of two directed paths
  at a common top node or by joining them with a bidirected edge.  A
  top node represents a diagonal entry of $\Omega$, and a bidirected
  edge an off-diagonal entry of $\Omega$.
\end{proof}

\begin{figure}[t]
  \centering
    \begin{center}
    \scalebox{0.8}{
      \begin{tikzpicture}[->,>=triangle 45, shorten >=0pt, auto,thick,
        main node/.style={circle,inner
          sep=2pt,fill=gray!20,draw,font=\sffamily}]

        \node[main node] (a) {2}; 
        \node[main node,label={[black!20!red]above right:{$\omega_{11}$}}]
          (b) 
          [right= 2cm of a] {1}; 
        \node[main node] (c) [right= 2cm of b] {3}; 
        \node[main node] (d) [right= 2cm of c] {4}; 
        
        \path[color=black!50!green,every
        node/.style={font=\sffamily\small}] (b) edge node[above]
        {$\lambda_{12}$} (a);
        \path[color=black!20!blue,every
        node/.style={font=\sffamily\small}] (b) edge node
        {$\lambda_{13}$} (c) (c) edge node
        {$\lambda_{34}$} (d);
        
        \node[main node] (a2) [below=.55cm of a] {2}; 
        \node[main node,label={[black!20!red]above right:{$\omega_{11}$}}] (b2)
          [right= 2cm of a2] {1}; 
        \node[main node] (c2) [right= 2cm of b2] {2}; 
        \node[main node] (d2) [right= 2cm of c2] {3}; 
        \node[main node] (e2) [right= 2cm of d2] {4}; 
        
        \path[color=black!50!green,every
        node/.style={font=\sffamily\small}] (b2) edge node[above]
        {$\lambda_{12}$} (a2);
        \path[color=black!20!blue,every
        node/.style={font=\sffamily\small}] (b2) edge node
        {$\lambda_{12}$} (c2) (c2) edge node
        {$\lambda_{23}$} (d2) (d2) edge node
        {$\lambda_{34}$} (e2);
        
        \node[main node,label={[black!20!red]above right:{$\omega_{22}$}}] (a3) [below=.55cm of a2] {2}; 
        \node[main node] (b3)
          [right= 2cm of a3] {3}; 
        \node[main node] (c3) [right= 2cm of b3] {4}; 
        
        \path[color=black!20!blue,every
        node/.style={font=\sffamily\small}] (a3) edge node[above]
        {$\color{MyBlue}\lambda_{12}$} (b3) (b3) edge node
        {$\color{MyBlue}\lambda_{13}$} (c3);

         \node[main node] (a4) [right=2cm of c3] {2}; 
        \node[main node] (b4)
          [right= 2cm of a4] {4}; 
        
        \path[color=black!20!red,<->,every
        node/.style={font=\sffamily\small}] (a4) edge node
        {$\color{MyRed}\omega_{24}$} (b4);
      \end{tikzpicture}
    }
  \end{center}
  \caption{Four treks in the graph from Figure~\ref{fig:recap}.}
  \label{fig:recap:treks}
\end{figure}
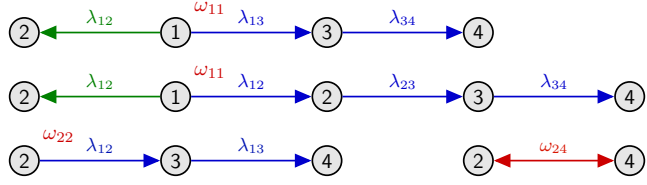

\begin{example}
  In Example~\ref{ex:recap}, the coordinate function $(\phi_G)_{24}$
  is a polynomial with four terms; see~(\ref{eq:recap-24}).  
  The 
  terms
  correspond to the four treks shown in Figure~\ref{fig:recap:treks}.
\end{example}

\begin{example}
  \label{ex:trek-rule-cyclic}
  Let $G$ be the graph from Figure~\ref{fig:two-ivs-cyclic}, which
  contains the directed cycle $2\to 3\to 4\to 2$.  Due to this cycle,
  $\det(I-\Lambda)=1-\lambda_{23} \lambda_{34} \lambda_{42}$.  
  As an example of a coordinate of $\phi_G$ we select
  \begin{align*}
    \phi_G(\Lambda,\Omega)_{24} &= \frac{1}{(1-\lambda_{23} \lambda_{34}
                                  \lambda_{42})^2}
\left[\lambda_{12}^2 \lambda_{23} \lambda_{34} \omega_{11}+\lambda_{12} \lambda_{13} \lambda_{34} \omega_{11} (\lambda_{23} \lambda_{34} \lambda_{42}+1)\right.\\
&\quad \left. +\lambda_{13}^2 \lambda_{34}^2 \lambda_{42} \omega_{11}+\lambda_{23} \lambda_{34} \omega_{22}+\lambda_{34}^2 \lambda_{42} \omega_{33}+2 \lambda_{34} \lambda_{42} \omega_{34}+\lambda_{42} \omega_{44}\right].
  \end{align*}
  To understand how this rational formula relates to the trek rule,
  let us focus on the treks from 2 to 4 that use the bidirected edge
  $2\bi 4$.  There are then two treks for which both left and right
  side are self-avoiding paths, namely,
  \begin{align*}
    \tau_1&:2 \leftarrow 4 \bi 3 \to 4, & 
    \tau_2&:2 \leftarrow 4 \leftarrow 3 \bi 4.
  \end{align*}
  Both of these treks yield the same monomial and together contribute
  the term $2\lambda_{34}\lambda_{42}\omega_{34}$ to $(\phi_G)_{24}$.
  All other treks from 2 to 4 that use edge $2\bi 4$ are obtained by
  inserting directed cycles into $\tau_1$ or $\tau_2$.  For instance,
  inserting one cycle on the left- and one on the right-hand side of
  $\tau_1$ gives
  \[
    2 \leftarrow 4 \leftarrow 3\leftarrow 2\leftarrow 4 \bi 3 \to 4
    \to 2\to 3\to 4.
  \]
  The monomials associated with these treks are 
  \[
    \lambda_{34}\lambda_{42}\omega_{34} \left(\lambda_{23}
      \lambda_{34} \lambda_{42} \right)^k, \qquad k=1,2,\dots. 
  \]
  The monomial for exponent $k$ arises from $k+1$ different treks;
  $l=0,1,\dots,k$ cycles are inserted on the left, the remaining $k-l$
  cycles are inserted on the right.  Hence, the contribution to
  $(\phi_G)_{24}$ made by all treks from 2 to 4 that use edge $2\bi 4$
  is
  \[
    2\sum_{k=0}^\infty (k+1)   \lambda_{34}\lambda_{42}\omega_{34} \left(\lambda_{23}
      \lambda_{34} \lambda_{42} \right)^k \;=\;
    \frac{2\lambda_{34}\lambda_{42}\omega_{34}}{(1- \lambda_{23} 
      \lambda_{34} \lambda_{42})^2},
  \]
  assuming that $|\lambda_{23}\lambda_{34}\lambda_{42}|<1$.
  This explains one of the terms in the rational expression for
  $(\phi_G)_{24}$.  The reasoning for the other terms is analogous.
\end{example}

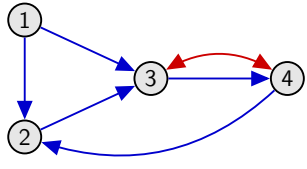
\begin{figure}[t]
        \centering
        \scalebox{.9}{
          \begin{tikzpicture}[->,>=triangle 45, shorten >=0pt,
            auto,thick,
            main node/.style={circle,inner sep=2pt,
              fill=gray!20,draw,font=\sffamily}] 
            
            \node[main node,rounded corners] (3) 
            {3};
            \node[main node,rounded corners] (1) 
            [above left =0.5cm and 1.5cm of 3] {1}; 
            \node[main node,rounded corners] (2) 
            [below left =0.5cm and 1.5cm of 3] {2}; 
            \node[main node,rounded corners] (4)
            [right=1.5cm of 3] {4};
            
            \path[color=black!20!blue,every node/.style={font=\sffamily\small}] 
            (1) edge node[above] {} (3)
            (1) edge node[above] {} (2)
            (2) edge node[below] {} (3)
            (3) edge node {} (4);
            \path[color=black!20!blue,bend left,every node/.style={font=\sffamily\small}]
            (4) edge node {} (2);
            \path[color=black!20!red,<->,every
            node/.style={font=\sffamily\small},bend left] 
            (3)  edge node {} (4);
          \end{tikzpicture}
        }
  \caption{A cyclic mixed graph.}
  \label{fig:two-ivs-cyclic}
\end{figure}


\section{Induced Subgraphs and Principal Submatrices}
\label{sec:induced-subgraphs}

Suppose $X=(X_i:i\in V)$ follows the linear structural equation model
given by mixed graph $G=(V,D,B)$, so $\var[X]=\phi_G(\Lambda,\Omega)$ for some
$\Lambda\in\mathbb{R}^D_\mathrm{reg}$ and $\Omega\in\mathit{PD}(B)$.
Let $A\subseteq V$ be a subset of nodes.  Then the covariance matrix
of the subvector $X_A=(X_i:i\in A)$ is obtained by projecting to the
relevant principal submatrix, that is,
\begin{equation}
  \label{eq:principal-submatrix}
  \var[X_A]\;=\;
  \phi_G(\Lambda,\Omega)_{A,A}.  
\end{equation}
The resulting map $(\Lambda,\Omega)\mapsto \phi_G(\Lambda,\Omega)_{A,A}$ may be
complicated, even when $\phi_G$ is not.

\begin{example}
  \label{ex:factor-analysis}
  Suppose $G=(V,D,\emptyset)$ is a directed graph with $i\to j\in D$
  if and only if $i\notin A$ and $j\in A$; the graph is thus
  bipartite.  Then the image of $\phi_G(\Lambda,\Omega)_{A,A}$ is the
  set of covariance matrices of a factor analysis model with
  $|V\setminus A|$ factors.  Factor analysis models have complicated
  geometric structure, particularly when $V\setminus A$ has more than
  two elements \cite{MR2772115,MR2299716,MR2652316,MR2504939}.
  Open problems remain even for $|V\setminus A|=1$ when
  allowing additional directed edges among the nodes in $A$
  \cite{MR3466188}.
\end{example}

For a general mixed graph $G=(V,D,B)$, let $D_A=D\cap (A\times A)$ be
the set of directed edges with both endpoints in $A$.  Similarly, let
$B_A\subset B$ be the set of bidirected edges that have both endpoints
in $A$.  The subgraph induced by $A$ is the mixed graph
$G_A=(A,D_A,B_A)$.  Example~\ref{ex:factor-analysis} and also already
Example~\ref{ex:iv} show that the covariance matrices obtained from
$\phi_{G_A}$ generally differ from those obtained by projecting
$\phi_G$ onto the $A\times A$ submatrix.  However, as we now
emphasize, induced subgraphs are relevant in a special case.

Define the set of \emph{parents} of a node $i\in V$ as
\[
  \pa(i) \;=\; \{j\in V\::\: j  \to i \}.
\]
A set of nodes $A\subseteq V$ is \emph{ancestral} if $i\in A$ implies
that $\pa(i)\subseteq A$.  The terminology indicates that such a set
contains all its ancestors, where an ancestor is a node with a
directed path to some node in $A$.  Ancestral sets are obtained by
recursively removing sink nodes.  We define node $i$ to be a
\emph{sink node} of $G$ if it is a sink of the directed part $(V,D)$,
that is, if all directed edges incident to $i$ have their head at $i$.

\begin{theorem}
  \label{thm:ancestral-subgraphs}
  Let $G=(V,D,B)$ be a mixed graph, and let $G_A$ be the subgraph
  induced by an ancestral set $A\subset V$.  Then for all
  $\Lambda\in\mathbb{R}^D_\mathrm{reg}$ and $\Omega\in\mathit{PD}(B)$,
  we have
  \[
    \phi_{G_A}(\Lambda_{A, A},\Omega_{A, A}) \;=\;
    \left[\phi_{G}(\Lambda,\Omega)\right]_{A, A}.
  \]
\end{theorem}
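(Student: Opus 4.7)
My plan is to reduce the identity to a block-matrix computation by exploiting the structural consequence of ancestrality on $\Lambda$. First, I would observe that if $V$ is ordered with $A$ preceding $V \setminus A$, then $\Lambda$ takes the block upper triangular form
\[
\Lambda \;=\; \begin{pmatrix} \Lambda_{A,A} & \Lambda_{A,\,V\setminus A} \\ 0 & \Lambda_{V\setminus A,\,V\setminus A} \end{pmatrix}.
\]
The zero in the $(V\setminus A,\,A)$ block is forced by ancestrality: an entry $\lambda_{ij}$ with $i \in V \setminus A$ and $j \in A$ would correspond to an edge $i \to j \in D$, which would force $i \in \pa(j) \subseteq A$, a contradiction. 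Consequently $I - \Lambda$ is block upper triangular, and its invertibility forces invertibility of both diagonal blocks; $(I - \Lambda)^{-1}$ inherits the same block form with diagonal blocks $(I_A - \Lambda_{A,A})^{-1}$ and $(I_{V \setminus A} - \Lambda_{V\setminus A,\,V\setminus A})^{-1}$, so in particular the $(V \setminus A,\,A)$ block of $(I - \Lambda)^{-1}$ vanishes.

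I would then check that the restricted parameters lie in the appropriate spaces. The matrix $\Lambda_{A,A}$ is supported in $D_A$, since for $i, j \in A$ the condition $i \to j \notin D_A$ is equivalent to $i \to j \notin D$, and the block-triangular argument above already gives $\Lambda_{A,A} \in \mathbb{R}^{D_A}_{\mathrm{reg}}$. The matrix $\Omega_{A,A}$ is a principal submatrix of a positive definite matrix, hence positive definite; for distinct $i, j \in A$, if $i \bi j \notin B_A$ then $i \bi j \notin B$ (since both endpoints lie in $A$), so $\omega_{ij} = 0$. Thus $\Omega_{A,A} \in \mathit{PD}(B_A)$ and the right-hand side $\phi_{G_A}(\Lambda_{A,A}, \Omega_{A,A})$ is well-defined.

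Finally, the identity itself is a direct block calculation. Writing $N = (I - \Lambda)^{-1}$, the $A$-columns of $N$ have the form $(N_{A,A}^\trans,\,0)^\trans$ because $N_{V\setminus A,\,A} = 0$; hence every contribution to the $(A,A)$ block of $N^\trans \Omega N$ involving $\Omega_{A,\,V\setminus A}$, $\Omega_{V\setminus A,\,A}$, or $\Omega_{V\setminus A,\,V\setminus A}$ drops out, leaving
\[
[\phi_G(\Lambda,\Omega)]_{A,A} \;=\; (N^\trans \Omega N)_{A,A} \;=\; N_{A,A}^\trans \, \Omega_{A,A} \, N_{A,A} \;=\; \phi_{G_A}(\Lambda_{A,A},\Omega_{A,A}).
\]
The only real obstacle is the bookkeeping needed to establish the block triangularity and to propagate it through the matrix inverse; there is no hidden combinatorial content. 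As an alternative, one could argue via the trek rule: ancestrality forces every trek in $G$ between two vertices of $A$ to stay inside $A$ and to use only edges of $D_A \cup B_A$, so the two trek expansions agree term by term. The block approach is nonetheless cleaner in the cyclic case since it sidesteps questions of convergence for the underlying power series.
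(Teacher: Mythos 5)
Your proof is correct, but it takes a genuinely different route from the paper. The paper argues combinatorially via the trek rule: since $A$ is ancestral, every node on either side of a trek between two nodes of $A$ is an ancestor of a node in $A$ and hence lies in $A$, so the treks from $i$ to $j$ in $G$ and in $G_A$ coincide, and the trek expansions of the two sides agree term by term --- this is exactly the alternative you sketch in your last paragraph. Your main argument instead works at the level of matrices: ancestrality forces $\Lambda_{V\setminus A,\,A}=0$, so $I-\Lambda$ is block upper triangular, the inverse inherits this structure with $(A,A)$ block $(I_A-\Lambda_{A,A})^{-1}$, and the $(A,A)$ block of $(I-\Lambda)^{-\trans}\Omega(I-\Lambda)^{-1}$ collapses to $\phi_{G_A}(\Lambda_{A,A},\Omega_{A,A})$. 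Your approach has two genuine advantages: it verifies explicitly that $(\Lambda_{A,A},\Omega_{A,A})\in\mathbb{R}^{D_A}_{\mathrm{reg}}\times\mathit{PD}(B_A)$, so the left-hand side is well defined (the paper leaves this implicit), and it avoids any appeal to the trek rule, which for cyclic graphs is either a formal power series identity or requires spectral conditions on $\Lambda$, whereas your block computation is valid for every $\Lambda\in\mathbb{R}^D_{\mathrm{reg}}$ without further qualification. What the paper's route buys is brevity (given that the trek rule is already established) and the combinatorial insight --- treks into an ancestral set cannot leave it --- which is reused elsewhere, e.g.\ in Proof B of Theorem~\ref{thm:glob-id-dags} and in the discussion of Verma-type constraints.
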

\begin{proof}
  Let $i,j\in A$.  By the trek-rule, the $(i,j)$ entry of
  $\phi_G(\Lambda,\Omega)$ is given by summing the monomials associated
  to treks from $i$ to $j$ in $G$.  Because $A$ is ancestral, a trek from $i$
  to $j$ in $G$ cannot leave $A$.  Hence, the set of treks from $i$ to
  $j$ in $G$ coincides with the set of treks from $i$ to $j$ in the
  subgraph $G_A$.  Applying the trek-rule to $G_A$ yields the claim.
\end{proof}

\begin{example}
  \label{ex:remove-sink}
  Take up Example~\ref{ex:iv:mixedgraph:takeup}.  Clearly, node 3 is a
  sink in the graph from Figure~\ref{fig:iv:mixed}.  Hence, the set
  $\{1,2\}$ is ancestral.  Inspecting the matrix displayed
  in~(\ref{eq:iv:Sigma}), we see that removing the third row and
  column yields the matrix for the induced subgraph $1\to 2$.
\end{example}

\section{Graph Decomposition}
\label{sec:graph-decomposition}

We now present a very useful graph decomposition, which allows one to
address several questions of interest by considering smaller
subgraphs.  The decomposition for acyclic mixed graphs was introduced
in work of Jin Tian \cite{tian:2005,tian:pearl:2002}.  Here we
formulate the natural extension to possibly cyclic mixed graphs.

Consider partitioning the vertex set of a mixed graph $G=(V,D,B)$ in
two ways.  The first partition is given by the connected components of
the bidirected part $(V,B)$.  The second partition is obtained from
the strongly connected components of the digraph $(V,D)$.  For two
distinct nodes to belong to the same strongly connected component,
there must be directed paths in either direction between them.  Let
$\mathcal{C}(G)$ be the finest common coarsening of the two
partitions.  Two nodes $i\not=j$ are in the same block of
$\mathcal{C}(G)$ if and only if they are connected by a path that uses
only edges that are bidirected or part of some directed cycle.  Note
that $G$ is acyclic if and only if all strongly connected components
are singleton sets, in which case the blocks of $\mathcal{C}(G)$ are
simply the connected components of the bidirected part $(V,B)$.

For a block $C\in\mathcal{C}(G)$, define 
\[
  V[C] := C \cup \bigcup_{i\in C} \pa(i)
\]
to be the union of the block and all parents of nodes in the block.
Let $D[C] = D \cap (V[C] \times C)$ be the set of directed edges with
head in $C$.   Let $B[C]$ be the set of bidirected with both
endpoints in $C$.

\begin{definition}
  The graphs $G[C] = (V[C], D[C], B[C])$, $C\in\mathcal{C}(G)$, form a
  decomposition of $G$, and we refer to them as the
  \emph{mixed components} of $G$.
\end{definition}

A graph decomposition partitions the edge set.  As we are working with
mixed graphs both edge sets are partitioned in the decomposition.  We
note that the set $V[C]\setminus C$ is the set of sources nodes of
$G[C]$.  Here, we define a source node as a node that is a tail on all
edges it is incident to, with the convention that both endpoints of a
bidirected edge are heads.

\begin{example}
  \label{ex:tian-decomp-graph}
  The graph in Figure~\ref{fig:tian-decomp} has the bidirected
  components $\{1,4\}$, $\{3\}$, and $\{2,5\}$.  The strong components
  of the directed part are $\{1\}$, $\{2,3\}$, $\{4\}$, and $\{5\}$.
  The finest common coarsening of the two partitions is
  $\{ \{1,4\}, \{2,3,5\}\}$.  The graph thus has two mixed components
  with vertex sets $V[\{2,3,5\}]=\{1,2,3,4,5\}$ and
  $V[\{1,4\}]=\{1,2,3,4\}$.  The mixed component with vertex set
  $V[\{2,3,5\}]$ contains all edges with a head in $\{2,3,5\}$, and
  the second mixed component contains all edges with a head in
  $\{1,4\}$.  The components are depicted in Figure~\ref{fig:tian-decomp}.
\end{example}

For $C\in\mathcal{C}(G)$, define the projection
$\pi^\to_C:\mathbb{R}^{V\times V}\to\mathbb{R}^{V[C]\times V[C]}$ by
setting
\[
  \pi^\to_C(\Lambda)_{ij} =
  \begin{cases}
    \lambda_{ij} &\text{if} \ j\in C,\\
    0 &\text{if} \ j\in V[C]\setminus C.
  \end{cases}
\]
Define a second map
$\pi^\bi_C:\mathbb{R}^{V\times V}\to\mathbb{R}^{V[C]\times V[C]}$ by
\[
  \pi^\bi_C(\Omega)_{ij} =
  \begin{cases}
    \omega_{ij} &\text{if} \ i,j\in C,\\
    1 &\text{if} \ i=j\in V[C]\setminus C,\\
    0 &\text{otherwise}.
  \end{cases}
\]
It projects onto the $C\times C$ submatrix of $\Omega$ and then adds
an identity matrix as a diagonal block over $V[C]\setminus C$.  Define
a subset of $\mathit{PD}(B[C])\subset
\mathbb{R}^{V[C]\times V[C]}$ as
\[
\mathit{PD}_I(B[C])\;=\; \left\{\Omega=(\omega_{ij})\in\mathit{PD}(B[C])
  \::\: \omega_{ii}=1 \;\text{ if }\;  i\in V[C]\setminus C \right\}.
\]
Then we have
\[
\pi^\to_C:\mathbb{R}^D\to\mathbb{R}^{D[C]} \quad\text{and}\quad
\pi^\to_C:\mathit{PD}(B)\to\mathit{PD}_I(B[C])
\]
because $G[C]$ is a subgraph of $G$.  With
$\pi_C=(\pi^\to_C,\pi^\bi_C)$, we obtain the isomorphism
\[
\pi=(\pi_C)_{C\in\mathcal{C}(G)} \::\:
 \mathbb{R}^D\times
\mathit{PD}(B) \:\to\: \prod_{C\in\mathcal{C}(G)} \mathbb{R}^{D[C]}\times\mathit{PD}_I(B[C]).
\]

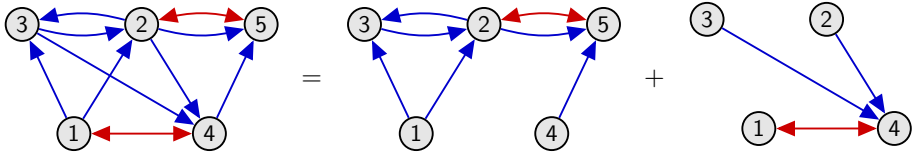
\begin{figure}[t]
  \centering
  \begin{tabular}{m{3.7cm}m{0.15cm}m{3.7cm}m{0.15cm}m{3cm}}
  \scalebox{0.9}{
    \begin{tikzpicture}[->,>=triangle 45, shorten >=0pt,
      auto,thick, main node/.style={circle,inner
        sep=2pt,fill=gray!20,draw,font=\sffamily}]
      
      \node[main node] (3) {3}; 
      \node[main node] (2) [right=1.25cm of 3] {2}; 
      \node[main node] (5) [right=1.25cm of 2] {5}; 
      \node[main node] (1) [below right=1.25cm and 0.4cm of 3] {1};
      \node[main node] (4) [below left=1.25cm and 0.4cm of 5] {4};
      
      \path[color=black!20!blue] 
      (1) edge node {} (2) 
      (3) edge node {} (4)
      (1) edge node {} (3)
      (2) edge node {} (4)
      (4) edge node {} (5);

      \path[color=black!20!blue, bend right=15]
      (2) edge node {} (3)
      (2) edge node {} (5)
      (3) edge node {} (2);

      \path[color=black!20!red,<->, bend left=15] 
      (2) edge node {} (5);
      \path[color=black!20!red,<->] 
      (1) edge node {} (4);
    \end{tikzpicture}
  }
    &$=$ &
  \scalebox{0.9}{
    \begin{tikzpicture}[->,>=triangle 45, shorten >=0pt,
      auto,thick, main node/.style={circle,inner
        sep=2pt,fill=gray!20,draw,font=\sffamily}]
      
      \node[main node] (3) {3}; 
      \node[main node] (2) [right=1.25cm of 3] {2}; 
      \node[main node] (5) [right=1.25cm of 2] {5}; 
      \node[main node] (1) [below right=1.25cm and 0.4cm of 3] {1};
      \node[main node] (4) [below left=1.25cm and 0.4cm of 5] {4};
      
      \path[color=black!20!blue] 
      (1) edge node {} (2) 
      (1) edge node {} (3)
      (4) edge node {} (5);

      \path[color=black!20!blue, bend right=15]
      (2) edge node {} (3)
      (2) edge node {} (5)
      (3) edge node {} (2);

      \path[color=black!20!red,<->, bend left=15] 
      (2) edge node {} (5);
    \end{tikzpicture}
  }
    & $+$ &
  \scalebox{0.9}{
    \begin{tikzpicture}[->,>=triangle 45, shorten >=0pt,
      auto,thick, main node/.style={circle,inner
        sep=2pt,fill=gray!20,draw,font=\sffamily}]
      
      \node[main node] (3) {3}; 
      \node[main node] (2) [right=1.25cm of 3] {2}; 
      \node[main node,draw=none,fill=white] (5) [right=1.25cm of 2]
      {$\phantom{5}$}; 
      \node[main node] (1) [below right=1.25cm and 0.4cm of 3] {1};
      \node[main node] (4) [below left=1.25cm and 0.4cm of 5] {4};
      
      \path[color=black!20!blue] 
      (3) edge node {} (4)
      (2) edge node {} (4);

      \path[color=black!20!red,<->] 
      (1) edge node {} (4);
    \end{tikzpicture}
    }
  \end{tabular}
  \caption{A mixed graph is decomposed into its two mixed
    components.}
  \label{fig:tian-decomp}
\end{figure}

\begin{example}
  \label{ex:tian-decomp-projections}
  Let $G$ be the graph from Figure~\ref{fig:tian-decomp}, which has
  $\mathcal{C}(G)=\{\{1,4\},\{2,3,5\}\}$.  A matrix in $\mathbb{R}^D$
  is of the form
  \begin{align*}
    \Lambda=
    \begin{bmatrix}
      0&\lambda_{12}&\lambda_{13}&0&0\\
      0&0&\lambda_{23}&\lambda_{24}&\lambda_{25}\\
      0&\lambda_{32}&0&\lambda_{34}&0\\
      0&0&0&0&\lambda_{45}\\
      0&0&0&0&0
    \end{bmatrix}.
  \end{align*}
  Its projections are
  \begin{align*}
    \pi^\to_{\{2,3,5\}}(\Lambda)
    &=    
      \begin{bmatrix}
        0&\lambda_{12}&\lambda_{13}&0&0\\
        0&0&\lambda_{23}&0&\lambda_{25}\\
        0&\lambda_{32}&0&0&0\\
      0&0&0&0&\lambda_{45}\\
      0&0&0&0&0
    \end{bmatrix},
    &
    \pi^\to_{\{1,4\}}(\Lambda)
      &=
    \begin{bmatrix}
      0&0&0&0&0\\
      0&0&0&\lambda_{24}&0\\
      0&0&0&\lambda_{34}&0\\
      0&0&0&0&0\\
      0&0&0&0&0
    \end{bmatrix}.
    \end{align*}
    An error covariance matrix in $\mathit{PD}(B)$ has the form
    \begin{align*}
      \Omega=\begin{bmatrix}
        \omega_{11}&0&0&\omega_{14}&0\\
        0&\omega_{22}&0&0&\omega_{25}\\
        0&0&\omega_{33}&0&0\\
        \omega_{14}&0&0&\omega_{44}&0\\
        0&\omega_{25}&0&0&\omega_{55}
      \end{bmatrix},
    \end{align*}
    and we have
    \begin{align*}
      \pi^\bi_{\{2,3,5\}}(\Omega)
      &=
        \begin{bmatrix}
          1&0&0&0&0\\
          0&\omega_{22}&0&0&\omega_{25}\\
          0&0&\omega_{33}&0&0\\
        0&0&0&1&0\\
        0&\omega_{25}&0&0&\omega_{55}
      \end{bmatrix},
                 &
                   \pi^\bi_{\{1,4\}}(\Omega)
                   &=
              \begin{bmatrix}
        \omega_{11}&0&0&\omega_{14}&0\\
        0&1&0&0&0\\
        0&0&1&0&0\\
        \omega_{14}&0&0&\omega_{44}&0\\
        0&0&0&0&1
      \end{bmatrix}.
    \end{align*}
\end{example}

With these preparation in place we may state Tian's theorem as
follows.  Recall that $\mathit{PD}_V$ is our symbol for the cone of
positive definite $V\times V$ matrices.

\begin{theorem}
  \label{thm:tian-decomposition}
  Let $G=(V,D,B)$ be a mixed graph with mixed components
  $G[C]=(V[C],D[C],B[C])$ for $C\in\mathcal{C}(G)$.  Then there is an
  invertible map $\tau$ such that the following diagram commutes:
  \[
    \begin{CD}
      \mathbb{R}^D_\mathrm{reg}\times\mathit{PD}(B)  @>\mspace{50mu}\phi_G\mspace{50mu}>> \mathit{PD}_V\\
      @AAA \mspace{-605mu} @VV{\pi}V \mspace{-75mu} @AAA \mspace{-1mu} @VV{{\tau}}V\\
      \prod_{C\in\mathcal{C}(G)}
      \mathbb{R}^{D[C]}_\mathrm{reg}\times\mathit{PD}_I(B[C])
      @>(\phi_{G[C]})_{C\in\mathcal{C}(G)}>> \prod_{C\in\mathcal{C}(G)}
      \mathit{PD}_{V[C]}
    \end{CD}
    \medskip
  \]
  In other words,
  $\tau\circ \phi_G=(\phi_{G[C]}\circ\pi_C)_{C\in\mathcal{C}(G)}$.  Both $\tau$
  and its inverse are rational maps, defined on all of $\mathit{PD}_V$
  and all of $\prod_{k=1}^m \mathit{PD}_{V_k}$, respectively.
\end{theorem}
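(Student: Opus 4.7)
My plan is to construct $\tau$ componentwise as a rational map on $\mathit{PD}_V$, verify the commutativity via the trek rule, and then produce a rational inverse.

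For each mixed component $C \in \mathcal{C}(G)$, write $A := V[C] \setminus C$. The map $\tau_C \colon \mathit{PD}_V \to \mathit{PD}_{V[C]}$ should have the identity on the $A \times A$ block (matching the normalization built into $\pi^\bi_C$) and should agree with $\phi_{G[C]}(\pi_C(\Lambda, \Omega))$ whenever $\Sigma = \phi_G(\Lambda, \Omega)$. I will construct $\tau_C$ from the $V[C] \times V[C]$ principal submatrix of $\Sigma$ via Schur-complement and whitening operations relative to the $A \times A$ block, combined (when $A$ contains nodes that are downstream of $C$ in $G$) with c-component identification formulas of Tian that express the relevant interventional moments of $X_{V[C]}$ rationally in $\Sigma$. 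The denominators are principal minors of $\Sigma$, nonvanishing on $\mathit{PD}_V$, so $\tau = (\tau_C)_C$ is defined everywhere on $\mathit{PD}_V$.

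The core of the argument is verifying $\tau_C(\phi_G(\Lambda, \Omega)) = \phi_{G[C]}(\pi_C(\Lambda, \Omega))$ for each $C$. The trek rule of Section~\ref{sec:trek-rule} expands the right-hand side as a sum over $\mathcal{T}_{G[C]}(i, j)$ with the convention $\omega_{ii} = 1$ for $i \in A$, while the left-hand side expands the sum over the larger set $\mathcal{T}_G(i, j)$ and then applies the algebraic reshaping of $\tau_C$. The combinatorial claim is that this reshaping exactly cancels the contributions of treks in $G$ that leave $V[C]$ and the excess contributions of the non-unit source variances on $A$, leaving only the $\mathcal{T}_{G[C]}$ terms. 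Rather than enumerating treks one by one, I would argue matrix-algebraically: because bidirected edges respect mixed components, $\Omega$ is block diagonal along $\mathcal{C}(G)$, and the blocks of $(I - \Lambda)^{-1}$ induced by the strong-component structure of $(V, D)$ can be manipulated to isolate the contributions internal to $V[C]$ from those that leave it. This separation matches the Schur-complement structure of $\tau_C$ block by block.

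To produce $\tau^{-1}$, note that each $M_C$ determines, via its non-identity entries, specific combinations of the structural parameters attached to $C$: the coefficients of edges with head in $C$, the bidirected covariances within $C$, and the error variances on $C$. Taken together the $M_C$ determine $\Sigma$ as the unique positive-definite solution of an explicit system of rational equations whose denominators are nonvanishing on $\prod_C \mathit{PD}_{V[C]}$. The main obstacle is the commutativity verification: matching a matrix-algebraic Schur-complement manipulation to the combinatorial cancellation of (possibly infinitely many, when $G$ is cyclic) trek contributions in a mixed-component structure that itself need not form a DAG. The matrix-algebraic route, which avoids explicit trek enumeration, appears to be the most tractable path.
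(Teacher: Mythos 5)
Your plan is in the same matrix-algebraic spirit as the paper's argument (work component by component, exploit that $\Omega$ is block-diagonal over $\mathcal{C}(G)$ and that $I-\Lambda$ is block-triangular over the strong components), but it stops short at exactly the two places where the work lies. First, you never actually define $\tau_C$: you list properties it ``should'' have and defer the verification of $\tau_C\circ\phi_G=\phi_{G[C]}\circ\pi_C$, which you yourself flag as the main obstacle. The paper supplies the missing device: a block-LDL decomposition $\Sigma=(I-A)^{-T}\Delta(I-A)^{-1}$ taken with respect to the topologically ordered strongly connected components of $(V,D)$, together with Lemma~\ref{lem:chol-block-unique} --- uniqueness of the factors $A\in\Upper(\mathcal{C}(D))$, $\Delta\in\Blockdiag(\mathcal{C}(D))$, and the no-fill-in statement that $A$ inherits block-diagonality over the coarser partition $\mathcal{C}(G)$ whenever $\Sigma$ does. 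The map $\tau_C$ is defined from the blocks $A_{\overline{C},C}$, $A_{C,C}$, $\Delta_{C,C}$, and commutativity drops out of the uniqueness after rescaling $I-\Lambda$ by $\diag(I-\Lambda_{W,W}:W\in\mathcal{C}(D))$ and block-Cholesky-factoring the rescaled $\Omega$. Your sketch has no substitute for this uniqueness step; and invoking Tian's c-component identification formulas is not legitimate here, because those are established for acyclic graphs in a probabilistic framework, while the cyclic mixed-graph case is precisely what this theorem extends (the paper notes that the probabilistic proof would itself have to be generalized, replacing nodes by strong components).

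Second, the invertibility claim is undermined by your construction as stated. If each $\tau_C$ is a function of the principal submatrix $\Sigma_{V[C],V[C]}$ alone, then $\tau$ cannot have an inverse defined on $\prod_C\mathit{PD}_{V[C]}$: already for the path $1\to 2\to 3$ the blocks are singletons with $V[\{1\}]=\{1\}$, $V[\{2\}]=\{1,2\}$, $V[\{3\}]=\{2,3\}$, and these submatrices do not determine $\sigma_{13}$ for a general $\Sigma\in\mathit{PD}_V$. The paper's $\tau_C$ is deliberately \emph{not} a function of $\Sigma_{V[C],V[C]}$; it encodes regressions of $X_C$ on all preceding strong components, which is why the collection $(\tau_C(\Sigma))_C$ determines the full factors $A$ and $\Delta$ and hence $\Sigma$. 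Moreover, your route to $\tau^{-1}$ through ``structural parameters attached to $C$'' presupposes that these are recoverable from $M_C$, i.e., injectivity of $\phi_{G[C]}$, which fails in general (this is exactly the identifiability issue behind Corollary~\ref{cor:id-decomposition}). The paper's inversion avoids structural parameters entirely: from $\tau_C(\Sigma)$ one recovers $A_{\overline{C},C}$, $A_{C,C}$, $\Delta_{C,C}$ by a block-LDL decomposition (rational in $\tau_C(\Sigma)$, with principal-minor denominators that never vanish on the cone), assembles $A$ and $\Delta$ over all $C$, and reconstitutes $\Sigma$ from~$(I-A)^{-T}\Delta(I-A)^{-1}$.
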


Below we give a linear algebraic proof that
makes $\tau$ and its rational nature explicit.  Alternatively, a proof in
probabilistic notation could be given by generalizing the proof of 
\cite[Lemma 1]{tian:pearl:2002}.  For the generalization, the nodes in
the setup of \cite{tian:pearl:2002} may be replaced by the strong
components of the directed part $(V,D)$.

Theorem~\ref{thm:tian-decomposition} is a very useful result as
questions about $\phi_G$ can be answered by studying, one by one, the
maps $\phi_{G[C]}$ for the mixed components.  The fact that $\tau$ and
$\tau^{-1}$ are rational is important.  For instance, it allows one to
obtain precise algebraic information about parameter identifiability
in the sense of Definition~\ref{def:id-degree}.
    
\begin{cor}
  \label{cor:id-decomposition}
  The degree of identifiability of a mixed graph $G$ is the product of
  the degrees of identifiability of its mixed components $G[C]$,
  $C\in\mathcal{C}(G)$.  In particular, $\phi_G$ is (generically)
  injective if and only if each $\phi_{G[C]}$ is so, for
  $C\in\mathcal{C}(G)$.
\end{cor}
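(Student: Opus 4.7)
The plan is to transport the fiber structure of $\phi_G$ across the commutative diagram of Theorem~\ref{thm:tian-decomposition}. Fix a pair $(\Lambda,\Omega)\in\mathbb{R}^D_\mathrm{reg}\times\mathit{PD}(B)$ and write $(\Lambda_C,\Omega_C):=\pi_C(\Lambda,\Omega)$. Since $\pi=(\pi_C)_{C\in\mathcal{C}(G)}$ is an isomorphism of parameter spaces and $\tau$ is a bijection on the target of $\phi_G$, a pair $(\Lambda',\Omega')$ lies in $\mathcal{F}_G(\Lambda,\Omega)$ if and only if $\phi_{G[C]}(\pi_C(\Lambda',\Omega'))=\phi_{G[C]}(\Lambda_C,\Omega_C)$ for every $C\in\mathcal{C}(G)$. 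Because the right-hand side decouples across mixed components, this will yield a bijection
\[
  \pi\,:\,\mathcal{F}_G(\Lambda,\Omega)\;\xrightarrow{\ \sim\ }\;\prod_{C\in\mathcal{C}(G)}\mathcal{F}_{G[C]}(\Lambda_C,\Omega_C).
\]
Passing to cardinalities immediately gives the ``in particular'' statement: $\phi_G$ has a singleton fiber at $(\Lambda,\Omega)$ (and, taking this for all parameters, is injective) if and only if each $\phi_{G[C]}$ is so.

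For the algebraic degree statement, I would next promote the displayed bijection to the complexified parameter spaces. The map $\pi$ is linear in the matrix entries (each ingredient $\pi_C^\to$ and $\pi_C^\bi$ simply selects entries and inserts $1$s on an identity block), hence it extends verbatim to an isomorphism over $\mathbb{C}$. The maps $\tau$ and $\tau^{-1}$ are explicitly rational over $\mathbb{R}$, and the same formulas are rational over $\mathbb{C}$, so the commutative diagram of Theorem~\ref{thm:tian-decomposition} persists on a Zariski-open dense subset of the complex domain. The fiber bijection therefore holds generically over $\mathbb{C}$, and counting complex solutions yields that a generic complex fiber of $\phi_G$ has cardinality $\prod_{C\in\mathcal{C}(G)}k_C$, where $k_C$ is the algebraic degree of identifiability of $G[C]$.

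The main point that needs care, though essentially routine, is the transfer of genericity: a generic choice of $(\Lambda,\Omega)$ for $G$ should induce simultaneously generic parameters for every mixed component so that each of the component counts $k_C$ is attained at once. Here I would invoke the isomorphism property of $\pi$: for each $C$ the condition ``the fiber of $\phi_{G[C]}$ has cardinality $k_C$'' defines a nonempty Zariski-open subset $U_C$ of the complex parameter space of $G[C]$, and $\pi^{-1}\bigl(\prod_{C}U_C\bigr)$ is then nonempty and Zariski-open in the complex parameter space of $G$. On this set the fiber bijection forces $|\mathcal{F}_G(\Lambda,\Omega)|=\prod_C k_C$, so this product is the generic complex fiber count, i.e., the algebraic degree of identifiability of $G$.
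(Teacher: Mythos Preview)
Your argument is correct and is precisely the one the paper has in mind: the corollary is stated without proof, immediately after noting that the rationality of $\tau$ and $\tau^{-1}$ is what ``allows one to obtain precise algebraic information about parameter identifiability in the sense of Definition~\ref{def:id-degree}.'' Your fiber bijection via the commutative diagram, its complexification through the rational formulas, and the genericity transfer through the isomorphism $\pi$ simply make explicit what the paper leaves to the reader.
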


Our proof of Theorem~\ref{thm:tian-decomposition} is presented in
terms of Cholesky decompositions.  When applied to a Gaussian
covariance matrix, the Cholesky decomposition corresponds to factoring
the multivariate normal density into a product of conditional
densities, which is the connection to the probabilistic setting of
\cite{tian:pearl:2002}.  We begin by stating a lemma on 
uniqueness and sparsity in block-Cholesky decomposition.

If $\mathcal{C}$ is a partition of a finite set $V$, then we write
$\Blockdiag(\mathcal{C})$ for the space of block-diagonal matrices.
So, $A=(a_{ij})\in\mathbb{R}^{V\times V}$ is in
$\Blockdiag(\mathcal{C})$ if and only if $a_{ij}=0$ whenever $i$ and
$j$ are in distinct blocks of $\mathcal{C}$.  If we order the blocks
of the partition as $\mathcal{C}=\{C_1,\dots,C_k\}$, then we may
define a space of strictly block upper-triangular matrices
$\Upper(C)$, which contains 
$A=(a_{ij})\in\mathbb{R}^{V\times V}$ if
and only if $a_{ij}=0$ whenever $i\in C_u$ and $j\in C_v$ with
$u\ge v$.

\begin{lemma}
  \label{lem:chol-block-unique}
  Let $\Sigma\in\mathit{PD}_V$, and let $\mathcal{C}$ be a partition
  of $V$, with ordered blocks.
  \begin{enumerate}
  \item[(i)] There exist unique matrices $A\in\Upper(\mathcal{C})$ and
    $\Delta\in\Blockdiag(\mathcal{C})$ such that
    \[
      \Sigma = (I-A)^{-T} \Delta (I-A)^{-1}.
    \]
    The matrix $\Delta$ has positive definite diagonal blocks.
  \item[(ii)] Let $\mathcal{C}'$ be a second partition of $V$ that is
    coarser than $\mathcal{C}$. 
    If $\Sigma\in \Blockdiag(\mathcal{C}')$, then the matrix $A$ from
    (i) satisfies $A\in\Blockdiag(\mathcal{C}')$.
  \end{enumerate}
\end{lemma}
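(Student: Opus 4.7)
The plan is to establish (i) by induction on the number of blocks $k=|\mathcal{C}|$, using a Schur complement to strip off one block at a time, and then to deduce (ii) as an almost formal consequence of the uniqueness in (i).

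For (i), the base case $k=1$ is immediate (take $A=0$, $\Delta=\Sigma$). For the inductive step, I would partition
\[
  \Sigma \;=\; \begin{pmatrix}\Sigma_{11} & \Sigma_{12}\\ \Sigma_{12}^T & \Sigma_{22}\end{pmatrix}
\]
along $\{C_1,\,V\setminus C_1\}$ and correspondingly split any candidate $A\in\Upper(\mathcal{C})$ into its first block row $B := A_{C_1,V\setminus C_1}$ and the strictly block upper triangular matrix $A'$ on $V\setminus C_1$. Rewriting the target as $(I-A)^T\Sigma(I-A)=\Delta$ and reading off the three block equations yields: $\Delta_{C_1,C_1}=\Sigma_{11}$; the $(C_1,V\setminus C_1)$ equation, together with $\Sigma_{11}\succ 0$, forces $B=\Sigma_{11}^{-1}\Sigma_{12}(I-A')$ once $A'$ is known; and after substitution the $(V\setminus C_1,V\setminus C_1)$ block equation reduces to
\[
  \Delta_{V\setminus C_1,V\setminus C_1} \;=\; (I-A')^T\bigl[\Sigma_{22}-\Sigma_{12}^T\Sigma_{11}^{-1}\Sigma_{12}\bigr](I-A'),
\]
which is the target identity for $A'$ and the Schur complement $\Sigma_{22}-\Sigma_{12}^T\Sigma_{11}^{-1}\Sigma_{12}\succ 0$. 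The induction hypothesis applied with the ordered partition $\{C_2,\dots,C_k\}$ supplies $A'$ and the remaining diagonal blocks of $\Delta$; then $B$ is fixed by the displayed formula. Uniqueness propagates backwards through this recursion, and positive definiteness of each diagonal block of $\Delta$ follows from that of $\Sigma$ and successive Schur complements.

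For (ii), since $\Sigma\in\Blockdiag(\mathcal{C}')$, I would decompose $\Sigma=\bigoplus_{C'\in\mathcal{C}'}\Sigma_{C',C'}$. The ordering of $\mathcal{C}$ restricts to each $C'\in\mathcal{C}'$ (by selecting those blocks of $\mathcal{C}$ contained in $C'$), so (i) applies to each summand and yields $A^{C'}\in\Upper(\mathcal{C}\cap C')$ and $\Delta^{C'}\in\Blockdiag(\mathcal{C}\cap C')$. Assemble
\[
  \tilde A \;:=\; \bigoplus_{C'\in\mathcal{C}'} A^{C'}, \qquad \tilde\Delta \;:=\; \bigoplus_{C'\in\mathcal{C}'} \Delta^{C'},
\]
with zero blocks between distinct elements of $\mathcal{C}'$. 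Then $\tilde A\in\Upper(\mathcal{C})$ (strict block upper triangularity holds within each $C'$ by construction and trivially between distinct $C'$'s, regardless of how the ordering of $\mathcal{C}$ interleaves the blocks of $\mathcal{C}'$), and $\tilde\Delta\in\Blockdiag(\mathcal{C})$. Block-diagonal multiplication gives $(I-\tilde A)^{-T}\tilde\Delta(I-\tilde A)^{-1}=\bigoplus_{C'}\Sigma_{C',C'}=\Sigma$. By the uniqueness assertion in (i), $A=\tilde A$, which lies in $\Blockdiag(\mathcal{C}')$.

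The main obstacle is the inductive step of (i): the target identity is nonlinear in $A$, so the key technical point is to verify that the three block equations genuinely decouple into a recursion on the Schur complement plus a single linear equation determining the newly introduced block row of $A$. Once that bookkeeping is done, (ii) follows almost formally from uniqueness, the only substantive observation being that direct-summing valid block-Cholesky factors of the diagonal blocks of $\Sigma$ produces a valid (and hence, by uniqueness, the) block-Cholesky factorization of $\Sigma$ itself.
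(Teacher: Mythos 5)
Your proof is correct, but it is organized differently from the paper's. The paper disposes of both parts by citation: for (i) it invokes the standard block-LDL decomposition $\Sigma=(I+L)\Delta(I+L)^T$ with $L^T\in\Upper(\mathcal{C})$ and uses the fact that unit block upper-triangular matrices form a group to pass to $(I-A)^{-T}\Delta(I-A)^{-1}$; for (ii) it appeals to the known fill-in behavior of sparse (block) Cholesky factorization. You instead re-derive (i) from scratch by Schur-complement induction on the number of blocks, working directly with the equivalent equation $(I-A)^T\Sigma(I-A)=\Delta$; this is essentially the classical proof underlying block-LDL, and your bookkeeping is right: the $(C_1,C_1)$ block gives $\Delta_{C_1,C_1}=\Sigma_{11}$, the $(C_1,V\setminus C_1)$ block is linear in $B$ given $A'$ (using $\Sigma_{11}\succ 0$), and after substitution the remaining block is exactly the same problem for the positive definite Schur complement, so existence, uniqueness, and positive definiteness of the diagonal blocks all propagate. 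Your argument for (ii) is genuinely different from the paper's: rather than analyzing fill-in, you assemble blockwise factorizations of the diagonal blocks $\Sigma_{C',C'}$ (positive definite as principal submatrices) into a candidate pair $(\tilde A,\tilde\Delta)$ with $\tilde A\in\Upper(\mathcal{C})\cap\Blockdiag(\mathcal{C}')$ and conclude $A=\tilde A$ from the uniqueness in (i). What your route buys is self-containedness and the pleasant feature that (ii) becomes a formal corollary of the uniqueness statement; what the paper's route buys is brevity and an explicit link to standard numerical linear algebra, where the zero-fill-in property for block-diagonal sparsity patterns is a special case of general sparse Cholesky theory.
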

\begin{proof}
  (i) A block-LDL decomposition yields $\Sigma=(I+L)\Delta (I+L)^T$ for
  unique $L^T\in \Upper(\mathcal{C})$ and
  $\Delta\in\Blockdiag(\mathcal{C})$.  Unit block upper-triangular
  matrices form a group and, thus, $(I+L)^{-T}=I-A$ for
  $A\in \Upper(\mathcal{C})$.
  (ii) 
  The claim is a consequence of the way fill-in occurs when computing
  the Cholesky decomposition of a sparse matrix
  \cite[Section 4.1]{vandenberghe}.
\end{proof}

\begin{proof}[Proof of Theorem~\ref{thm:tian-decomposition}]
  We first show that, for every block $C\in\mathcal{C}(G)$, there
  exists a map $\tau_C$ such that
  $\tau_C\circ\phi_G=\phi_{G[C]}\circ\pi_C$.  Let
  $\Lambda\in\mathbb{R}^D_{\rm reg}$, $\Omega\in\mathit{PD}(B)$, and
  $\Sigma = \phi_G(\Lambda,\Omega)$.  Then the claim is that
  \begin{align}
    \tau_C(\Sigma)
    \label{eq:proof-to-show}
    &= 
    \begin{bmatrix}
      I-
      \begin{pmatrix}
        0 & \Lambda_{\overline{C},C}\\
        0 & \Lambda_{C,C} 
      \end{pmatrix}
    \end{bmatrix}^{-T}
    \begin{pmatrix}
      I & 0\\
      0 & \Omega_{C,C}
    \end{pmatrix}
    \begin{bmatrix}
      I-
      \begin{pmatrix}
        0 & \Lambda_{\overline{C},C} \\
        0 & \Lambda_{C,C}
      \end{pmatrix}
    \end{bmatrix}^{-1}.
  \end{align}

  Let $\mathcal{C}(D)$ be the partition of $V$ given by the strongly
  connected components of $(V,D)$.  Order the blocks of
  $\mathcal{C}(D)$ topologically as $W_1,\dots,W_k$ such that the
  existence of a directed path from a node in $W_u$ to a node in $W_v$
  implies that $v\ge u$.  By
  Lemma~\ref{lem:chol-block-unique}(i), there are
  $A\in\Upper(\mathcal{C}(D))$ and
  $\Delta\in\Blockdiag(\mathcal{C}(D))$ such that
  \begin{equation}
    \label{eq:proof-chol}
  \Sigma = (I-A)^{-T} \Delta (I-A)^{-1}
  \end{equation}
  Letting $\overline{C}=V[C]\setminus C$, we define
  \begin{equation}
    \label{eq:proof:claimed-W}
    \tau_C(\Sigma) = 
    \begin{bmatrix}
      I-
      \begin{pmatrix}
        0 & A_{\overline{C},C} \\
        0 & A_{C,C}
      \end{pmatrix}
    \end{bmatrix}^{-T}
    \begin{pmatrix}
      I & 0\\
      0 & \Delta_{C,C}
    \end{pmatrix}
    \begin{bmatrix}
      I-
      \begin{pmatrix}
        0 & A_{\overline{C},C}\\
        0 & A_{C,C} 
      \end{pmatrix}
    \end{bmatrix}^{-1}.
  \end{equation}
  
  If $G$ is acyclic then $\Lambda$ is strictly upper-triangular under
  a topological ordering and, thus,
  $\Lambda\in\Upper(\mathcal{C}(D))$.  When $G$ has directed cycles,
  then $\Lambda$ is block upper-triangular but not strictly so.
  Hence, we consider the block-diagonal matrix
  \begin{equation*}
    \label{eq:proof:make-upper-tri}
    \Delta_\Lambda\;=\;\diag( I-\Lambda_{W,W} : W\in\mathcal{C}(D)),
  \end{equation*}
  which is invertible because $\det(I-\Lambda)=\det(\Delta_\Lambda)$ and
  $\Lambda\in\mathbb{R}^D_\mathrm{reg}$.  Hence,
  \begin{equation}
    \label{eq:proof-make-unitriang}
    \Sigma=\big[ (I-\Lambda) \Delta_\Lambda^{-1}\big]^{-T} \big[
    \Delta_\Lambda^{-1}\Omega\Delta_\Lambda^{-1}\big]\big[ (I-\Lambda)
    \Delta_\Lambda^{-1}\big]^{-1}.
  \end{equation}
  Because $\Delta_\Lambda,\Omega\in\Blockdiag(\mathcal{C}(G))$, we
  have
  \begin{equation*}
    \label{eq:proof:Omega-tilde}
    \tilde\Omega \;=\; \Delta_\Lambda^{-T}\Omega\Delta_\Lambda^{-1} \;\in\;
    \Blockdiag(\mathcal{C}(G)) .
  \end{equation*}
  Moreover, 
  due to the block upper-triangular shape of $\Lambda$, 
  \begin{equation*}
    \label{eq:proof:Lambda-tilde}
    \tilde\Lambda \;=\; I-(I-\Lambda) \Delta_\Lambda^{-1}  \;\in\;
    \Upper(\mathcal{C}(D)) .
  \end{equation*}
  By Lemma~\ref{lem:chol-block-unique}(i) and (ii), there are
  $\Delta_\Omega\in\Blockdiag(\mathcal{C}(D))$ and
  $U\in\Upper(\mathcal{C}(D))\cap\Blockdiag(\mathcal{C}(G))$ such that
  \begin{equation}
    \label{eq:proof:Omega-tilde-chol}
    \tilde\Omega = (I-U)^{-T}\Delta_\Omega(I-U)^{-1}.
  \end{equation}
  Combining~(\ref{eq:proof-make-unitriang})
  and~(\ref{eq:proof:Omega-tilde-chol}) gives
  \begin{equation}
    \label{eq:proof:param-chol}
    \Sigma=
    \big[(I-\tilde\Lambda)(I-U)\big]^{-T}\Delta_\Omega
    \big[(I-\tilde\Lambda)(I-U)\big]^{-1}, 
  \end{equation}
  where
    $(I-\tilde\Lambda)(I-U)=I-(\tilde\Lambda+U-\tilde\Lambda U)$
  with $\tilde\Lambda+U-\tilde\Lambda U\in \Upper(\mathcal{C}(D))$.  

  By the uniqueness in Lemma~\ref{lem:chol-block-unique}(i), equations
  (\ref{eq:proof-chol}) and (\ref{eq:proof:param-chol}) imply that
  \begin{align}
    \label{eq:proof-Delta-eqn}
    \Delta&=\Delta_\Omega, \\
    \label{eq:proof-A-eqn}
    A&= \tilde\Lambda+U-\tilde\Lambda U.
  \end{align}
  Since $U\in\Blockdiag(\mathcal{C}(G))$, we have that
  \begin{equation}
    \label{eq:proof:zeros-in-C}
    U_{V\times C} = 
    \begin{pmatrix}
         U_{(V\setminus C)\times C}\\ U_{C\times C}
    \end{pmatrix}
    =    
    \begin{pmatrix}
      0\\ U_{C\times C}
    \end{pmatrix}.
  \end{equation}
  Therefore, by~(\ref{eq:proof-A-eqn}),
  $A_{C,C} = \tilde\Lambda_{C,C}+U_{C,C}-\tilde\Lambda_{C,C} U_{C,C}$,
  and we deduce that
  \begin{equation}
    \label{eq:proof:A-princ-submatrix-factor}
    I- A_{C,C} = (I-\tilde\Lambda_{C,C})(I-U_{C,C}) \quad\text{and}\quad
    A_{\overline{C},C} = \tilde\Lambda_{\overline{C},C}(I-U_{C,C}).
  \end{equation}
  Moreover, 
  \begin{equation}
    \label{eq:proof:tildeOmega-princ}
    \tilde\Omega_{C,C} = (I-U_{C,C})^{-T}\Delta_{C,C}(I-U_{C,C})^{-1},
  \end{equation}
  which follows from~(\ref{eq:proof-Delta-eqn}) and the fact that
  \begin{equation*}
    \label{eq:proof:I-C-partition}
    \big[(I-U)^{-1}\big]_{V,C} = 
    \begin{pmatrix}
      0\\ \big[(I-U)^{-1}\big]_{C,C}
    \end{pmatrix}= 
    \begin{pmatrix}
      0 \\ (I-U_{C,C})^{-1}
    \end{pmatrix},
  \end{equation*}
  which in turn follows from $U$ being
  in $\Blockdiag(\mathcal{C}(G))$.  
  
  Substituting the formulas from
  (\ref{eq:proof:A-princ-submatrix-factor})
  into~(\ref{eq:proof:claimed-W}) yields that
  \begin{multline}
    \label{eq:proof-tauW-in-Lambda-Omega}
    \tau_C(\Sigma)=
    \left[
    \begin{pmatrix}
      I & -\tilde\Lambda_{\overline{C},C}\\
      0 &  I-\tilde\Lambda_{C,C} 
    \end{pmatrix}
    \begin{pmatrix}
      I & 0 \\
      0 & I-U_{C,C}
    \end{pmatrix}
    \right]^{-T}\\
    \times
    \begin{pmatrix}
      I & 0\\
      0& \Delta_{C,C}
    \end{pmatrix}
    \left[
    \begin{pmatrix}
      I & -\tilde\Lambda_{\overline{C},C}\\
      0 &  I-\tilde\Lambda_{C,C} 
    \end{pmatrix}
    \begin{pmatrix}
      I & 0 \\
      0 & I-U_{C,C}
    \end{pmatrix}
    \right]^{-1}.
  \end{multline}
  Using~(\ref{eq:proof:tildeOmega-princ}), we get that
  \begin{equation}
    \label{eq:proof:piece-together-Omegatilde}
    \begin{pmatrix}
      I & 0 \\
      0 & I-U_{C,C}
    \end{pmatrix}^{-T}
    \begin{pmatrix}
      I & 0\\
      0& \Delta_{C,C}
    \end{pmatrix}
    \begin{pmatrix}
      I & 0 \\
      0 & I-U_{C,C}
    \end{pmatrix}^{-1}
    =
    \begin{pmatrix}
      I & 0\\
      0 & \tilde\Omega_{C,C}
    \end{pmatrix}.
  \end{equation}
  Recalling the definition of $\Delta_\Lambda$ and $\tilde\Lambda$,
  we have
  \begin{equation}
    \label{eq:proof:piece-together-Lambda}
    \begin{pmatrix}
      I & -\tilde\Lambda_{\overline{C},C}\\
      0 &  I-\tilde\Lambda_{C,C} 
    \end{pmatrix}
    =
    \begin{pmatrix}
      I & -\Lambda_{\overline{C},C}\\
      0 &  I-\Lambda_{C,C} 
    \end{pmatrix}
    \begin{pmatrix}
      I & 0\\
      0 & (\Delta_\Lambda^{-1})_{C,C}
    \end{pmatrix}
  \end{equation}
  Plugging~(\ref{eq:proof:piece-together-Omegatilde})
  and~(\ref{eq:proof:piece-together-Lambda})
  into~(\ref{eq:proof-tauW-in-Lambda-Omega}), 
  we obtain the claim from~(\ref{eq:proof-to-show}).

  The entries of the matrices $A$ and $\Delta$ in
  (\ref{eq:proof-chol}) are rational functions of $\Sigma$ that are
  defined on all of $\mathit{PD}_V$.  Hence, the same is true for the
  map $\tau_C$ defined in~(\ref{eq:proof:claimed-W}).  
  
  The value of $\tau_C(\Sigma)$ uniquely determines the matrices
  $A_{C,C}$, $A_{\overline{C},C}$, and $\Delta_{C,C}$  in
  (\ref{eq:proof:claimed-W}).  They are determined through a block LDL
  decomposition and, thus, rational functions of $\tau_C(\Sigma)$.
  Knowing the three matrices for all $C\in\mathcal{C}(G)$, we can form
  $A$ and $\Delta$ and recover $\Sigma$ using~(\ref{eq:proof-chol}).
  Hence, $\tau$ is invertible and the inverse is rational.
\end{proof}


\addtocontents{toc}{\vspace{-.25cm}}{}{}
\part{Parameter Identification}
\addtocontents{toc}{\vspace{.02cm}}{}{}

\section{Global Identifiability}
\label{sec:glob-ident}

This section discusses Question~\ref{qu:glob-id}, which asks for a
characterization of the mixed graphs $G=(V,D,B)$ for which the map
$\phi_G$ is injective.  In the statistical literature a model with
injective parametrization is also called \emph{globally
  identifiable}.  

\begin{example}
  If $G$ is the graph from Figure~\ref{fig:sur}, then $\phi_G$ is
  injective.  Indeed, the coefficients for the two directed edges of
  $G$ satisfy $\sigma_{11}\lambda_{12}=\sigma_{12}$ and
  $\sigma_{44}\lambda_{43}=\sigma_{34}$;
  recall~(\ref{eq:sur-global-id}).  Since every positive definite
  matrix $\Sigma=(\sigma_{ij})\in\mathbb{R}^{4\times 4}$ has
  $\sigma_{11},\sigma_{44}>0$, these two equations always have a
  unique solution.  Hence, all fibers
  $\mathcal{F}_G(\Lambda,\Omega)$ are singleton sets.  In contrast, if
  $G$ is the graph from Figure~\ref{fig:iv:mixed}, then only generic
  fibers are singleton sets and $\phi_G$ is not injective; recall
  Example~\ref{ex:iv-generic}.
\end{example}

Our first observation ties in with classical linear algebra. 

\begin{theorem}
  \label{thm:glob-id-dags}
  If $G=(V,{\color{MyBlue} D},{\color{MyRed} \emptyset})$ is an
  acyclic digraph, then $\phi_G$ is injective and has a rational
  inverse.
\end{theorem}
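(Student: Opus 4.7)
The plan is to recognize $\Sigma=\phi_G(\Lambda,\Omega)$ as a Cholesky-type (LDL) decomposition of $\Sigma$, so that injectivity reduces to the uniqueness of that decomposition, and then to exhibit a rational inverse via nodewise regression.

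Since $G$ is acyclic, I would first pick a topological order on $V$ so that $i\toblue j\in D$ forces $i<j$. In this order, every $\Lambda\in\mathbb{R}^D$ is strictly upper triangular, so $I-\Lambda$ is unit upper triangular. Because $B=\emptyset$, the matrix $\Omega\in\mathit{PD}(\emptyset)$ is diagonal with positive diagonal entries. Consequently,
\[
\Sigma\;=\;(I-\Lambda)^{-\trans}\Omega(I-\Lambda)^{-1}
\]
has exactly the form of an LDL-type factorization of $\Sigma$. Applying Lemma~\ref{lem:chol-block-unique}(i) with $\mathcal{C}$ the partition of $V$ into singletons (ordered topologically), we have $\Upper(\mathcal{C})$ equal to the strictly upper triangular matrices and $\Blockdiag(\mathcal{C})$ equal to the diagonal matrices. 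The lemma produces a \emph{unique} pair $(A,\Delta)$, with $A\in\Upper(\mathcal{C})$ and $\Delta\in\Blockdiag(\mathcal{C})$ (positive diagonal), satisfying $\Sigma=(I-A)^{-\trans}\Delta(I-A)^{-1}$. Any admissible $(\Lambda,\Omega)$ meets these constraints and so must coincide with $(A,\Delta)$; hence $\phi_G$ is injective.

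For the rational inverse, I would give closed-form expressions node by node. Fix $j\in V$. Because $G$ is acyclic, the subvector $X_{\pa(j)}$ is a function of the errors $\varepsilon_i$ for $i$ an ancestor of some parent of $j$, none of which is $j$. Since $\Omega$ is diagonal, $\varepsilon_j$ is uncorrelated with each such $\varepsilon_i$ and therefore with $X_{\pa(j)}$. Substituting into (\ref{eq:sem:general}) shows that $(\lambda_{ij})_{i\in\pa(j)}$ is the vector of population regression coefficients of $X_j$ on $X_{\pa(j)}$ and that $\omega_{jj}$ is the corresponding conditional variance. Explicitly,
\[
(\lambda_{ij})_{i\in\pa(j)}\;=\;\Sigma_{\pa(j),\pa(j)}^{-1}\,\Sigma_{\pa(j),j},\qquad
\omega_{jj}\;=\;\Sigma_{jj}-\Sigma_{j,\pa(j)}\Sigma_{\pa(j),\pa(j)}^{-1}\Sigma_{\pa(j),j},
\]
which are rational in the entries of $\Sigma$ and defined on all of $\mathit{PD}_V$, since every principal submatrix of a positive definite matrix is invertible.

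There is no genuine obstacle: the content is the classical uniqueness of Cholesky decomposition, already packaged in Lemma~\ref{lem:chol-block-unique}. The only point requiring a brief justification is the regression interpretation of $(\lambda_{ij},\omega_{jj})$ above, which is an immediate consequence of acyclicity together with the diagonality of $\Omega$; alternatively, one could simply verify by direct computation that the rational formulas displayed above left-invert $\phi_G$.
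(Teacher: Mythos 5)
Your proposal is correct and follows essentially the same route as the paper, which itself gives two proofs: your injectivity argument via uniqueness of the LDL decomposition (Lemma~\ref{lem:chol-block-unique}(i) with the singleton partition in a topological order) is the paper's Proof~A, and your nodewise regression formulas for $\Lambda_{\pa(j),j}$ and $\omega_{jj}$ are exactly the rational inverse of Proof~B, with the only cosmetic difference that you justify the key relations probabilistically (uncorrelatedness of $\varepsilon_j$ with $X_{\pa(j)}$) where the paper argues combinatorially via the trek rule. No gaps.
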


We give two proofs.  The first one emphasizes the connection to
Cholesky decomposition.

\begin{proof}[Proof A]
  Suppose $V=\{1,\dots,m\}$ is enumerated in reversed
  topological order such that $i\to j\in D$ implies that $j<i$.  Then
  $\Lambda$ is a strictly lower-triangular matrix, and
  $\phi_G(\Lambda,\Omega)$ has matrix inverse
  $(I - \Lambda) \Omega^{-1} (I - \Lambda)^T$.  This is the product of
  a unit lower-triangular matrix, a positive diagonal matrix and a
  unit upper-triangular matrix.  We may compute $I-\Lambda$ and
  $\Omega^{-1}$ by an LDL decomposition.
\end{proof}

The second proof emphasizes the graphical nature of the problem and
possible sparsity of $\Lambda$.  It shows more explicitly that the
inverse of $\phi_G$ is rational.

\begin{proof}[Proof B]
  Letting $\Sigma=(\sigma_{ij})=\phi_G(\Lambda,\Omega)$, we have that
  \begin{equation}
    \label{eq:dag:lambda-id-rel}
    \Sigma_{\pa(i),i} \;=\; \Sigma_{\pa(i),\pa(i)}\Lambda_{\pa(i),i}    
  \end{equation}
  because if $j\in\pa(i)$, then every trek from $j$ to $i$ ends with
  an edge $k\to i$ for $k\in\pa(i)$.  Indeed, a trek from $j$ to $i$
  for which this fails has to be a directed path from $i$ to $j$.
  Adding the edge $j\to i$ to this path would yield a directed cycle.
  Similarly, every nontrivial trek from $i$ to $i$ begins and ends
  with a directed edge whose tail is a parent of $i$.  Hence,
  \begin{equation}
    \label{eq:dag:omega-id-rel}
    \sigma_{ii}\;=\;\omega_{ii} +\Lambda_{\pa(i),i}^T
    \Sigma_{\pa(i),\pa(i)}\Lambda_{\pa(i),i}. 
  \end{equation}
  The matrix $\Sigma_{\pa(i),\pa(i)}$ is a principal submatrix of the
  positive definite matrix $\Sigma$ and, thus, invertible.  Therefore,
  \begin{align}
    \label{eq:dag:lambda-id}
    \Lambda_{\pa(i),i} &\;=\;
                         \left(\Sigma_{\pa(i),\pa(i)}\right)^{-1}
                         \Sigma_{\pa(i),i},\\ 
    \label{eq:dag:omega-id}
    \omega_{ii} &\;=\;
                  \sigma_{ii}-\Sigma_{i,\pa(i)}\left(\Sigma_{\pa(i),\pa(i)}\right)^{-1} 
                  \Sigma_{\pa(i),i}.
                  \qedhere
  \end{align}
\end{proof}

Proof B shows that the formula from~(\ref{eq:dag:lambda-id}) holds
more generally.  It merely needs to hold that every trek from
a node $j\in\pa(i)$ to $i$ ends with a directed edge with $i$ as its head, so
an edge of the form $k\to i$.  This holds for every node in the graph
if and only if the graph is \emph{ancestral} \cite{richardson:2002}.
A mixed graph is ancestral if the presence of a directed path from
node $i$ to node $j$ implies that $i\not=j$ and $i\bi j\notin B$.  An
ancestral graph is in particular acyclic.

The next easy lemma is crucial for the understanding of injectivity of
$\phi_G$.
  
\begin{lemma}
  \label{lem:subgraphs}
  If $\phi_G$ is injective and $H \subseteq G$ is a
  \emph{subgraph}, then $\phi_H$ is injective.
\end{lemma}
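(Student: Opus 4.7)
The plan is to exhibit $\phi_H$ as (literally) a restriction of $\phi_G$ to a subset of its domain, whereupon injectivity transfers by elementary set theory. The term \emph{subgraph} should be read as $H=(V,D',B')$ with $D'\subseteq D$ and $B'\subseteq B$ (the vertex set is the same, since otherwise the formula $\phi_H(\Lambda,\Omega)=(I-\Lambda)^{-T}\Omega(I-\Lambda)^{-1}$ produces matrices of a different size and the statement has no content).

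First I would observe the two inclusions $\mathbb{R}^{D'}\subseteq \mathbb{R}^{D}$ and $\mathit{PD}(B')\subseteq \mathit{PD}(B)$, which are immediate from the support definitions~(\ref{eq:R^D}) and~(\ref{eq:PDB}): a matrix whose off-diagonal support lies in the smaller edge set also has support in the larger edge set. Next, for $\Lambda\in\mathbb{R}^{D'}$ the invertibility of $I-\Lambda$ does not depend on whether we view $\Lambda$ as an element of $\mathbb{R}^{D'}$ or of $\mathbb{R}^D$, so in fact $\mathbb{R}^{D'}_\mathrm{reg}\subseteq \mathbb{R}^D_\mathrm{reg}$.

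With these inclusions, the domain of $\phi_H$ sits inside the domain of $\phi_G$. Since the two maps are given by the \emph{same} formula $(I-\Lambda)^{-T}\Omega(I-\Lambda)^{-1}$, we have $\phi_G(\Lambda,\Omega)=\phi_H(\Lambda,\Omega)$ for every $(\Lambda,\Omega)$ in the domain of $\phi_H$. Consequently, any two distinct preimages of the same point under $\phi_H$ would also be distinct preimages of the same point under $\phi_G$, contradicting its injectivity.

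There is no real obstacle here; the only thing to verify carefully is that the support conditions and the regularity condition behave well under the embedding, which is routine. I would spell the argument out in one short paragraph and conclude.
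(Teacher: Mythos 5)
Your edge-removal argument is correct and coincides with half of the paper's proof: when $H$ has the same vertex set as $G$, the domain of $\phi_H$ sits inside the domain of $\phi_G$, the two maps are given by the same formula, and injectivity restricts. The genuine gap is your decision to read ``subgraph'' as necessarily spanning and to dismiss vertex removal as contentless. The paper needs the lemma precisely for subgraphs on \emph{fewer} vertices: Corollary~\ref{cor:global-id-simple} applies it to a two-node subgraph $H$ of an arbitrary graph $G$, the proof that injectivity forces acyclicity restricts to a directed cycle on a subset of the nodes, and Theorem~\ref{thm:glob-id}(a) speaks of subgraphs whose bidirected part is connected, again on a vertex subset. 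In those applications $\phi_H$ takes values in symmetric matrices of smaller format, so it is not literally a restriction of $\phi_G$ and your set-theoretic argument does not apply verbatim.

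The missing step is small but not vacuous. Any subgraph is obtained by first deleting edges (your case) and then deleting nodes that have become isolated. For an isolated node $i$, after a suitable ordering of the vertices the parametrization splits as
\[
  \phi_G(\Lambda,\Omega) \;=\;
  \begin{pmatrix}
    \phi_H(\Lambda,\Omega) & 0\\
    0 & \omega_{ii}
  \end{pmatrix},
\]
so if $\phi_H$ fails to be injective, two distinct parameter pairs for $H$ with equal $\phi_H$-image can be padded with the same value of $\omega_{ii}$ (and a zero row and column of $\Lambda$ and $\Omega$ at $i$) to produce two distinct parameter pairs for $G$ with equal $\phi_G$-image, contradicting injectivity of $\phi_G$; the padded $\Omega$ remains positive definite and $I-\Lambda$ remains invertible by the block structure. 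With this supplement your proof is complete; without it, the statement you prove is too weak to support the lemma's later uses in the paper.
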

\begin{proof}
  Any subgraphs can be obtained by removing edges one at a time, and
  then removing isolated nodes.  If $H$ is obtained from $G=(V,D,B)$
  by removing the edge $i\to j$, then $\phi_H$ is the restriction of
  $\phi_G$ to the subset of matrices
  $\Lambda\in\mathbb{R}^D_\mathrm{reg}$ that have $\lambda_{ij}=0$.
  If we instead remove the edge $i\bi j$, then the restrictions is to
  matrices $\Omega\in\mathit{PD}(B)$ with $\omega_{ij}=0$.  If $H$ is
  obtained by removing the isolated node $i$, then
  \[
    \phi_G(\Lambda,\Omega) =
    \begin{pmatrix}
      \phi_H(\Lambda,\Omega) & 0\\
      0 & \omega_{ii}
    \end{pmatrix}.
  \]
  In either case non-injectivity of $\phi_H$ implies non-injectivity
  of $\phi_G$.
\end{proof}

\begin{cor}
  \label{cor:global-id-simple}
  If $\phi_G$ is injective, then $G$ is {simple}, that is, for
  any vertices $i\not= j$ at most one of the three edges $i \bi j$,
  $i\toblue j$ and $i \otblue j$ may appear in $G$.
\end{cor}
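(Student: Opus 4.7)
The strategy is proof by contrapositive using Lemma~\ref{lem:subgraphs}. Assume $G$ is not simple: some pair of distinct vertices $\{i,j\}$ is joined by at least two of the three edges $i\to j$, $i\leftarrow j$, and $i\bi j$. Deleting every other edge of $G$ and every isolated node produces a subgraph $H$ of $G$ whose vertex set is $\{i,j\}$ and whose edge set consists precisely of the two (or three) edges in question. By Lemma~\ref{lem:subgraphs}, injectivity of $\phi_G$ would imply injectivity of $\phi_H$, so it suffices to prove that $\phi_H$ is \emph{not} injective.

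Up to interchanging $i$ and $j$, there are only two minimal non-simple configurations to examine: (a) $H$ carries the directed $2$-cycle $i\to j$, $j\to i$ with $B=\emptyset$; and (b) $H$ carries $i\to j$ together with $i\bi j$. The three-edge case contains both as subgraphs, so Lemma~\ref{lem:subgraphs} reduces it to (a) or (b). In both configurations, the parameter space $\mathbb{R}^D_\mathrm{reg}\times\mathit{PD}(B)$ is a Zariski-open subset of $\mathbb{R}^4$: two diagonal variances $\omega_{ii},\omega_{jj}$ together with two further free parameters (either two edge coefficients, or one edge coefficient together with the off-diagonal entry $\omega_{ij}$). The codomain $\mathit{PD}_2$, however, has real dimension only $3$. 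Since $\phi_H$ is a rational map between semi-algebraic sets, its generic fiber must have dimension at least $1$, hence be infinite, and $\phi_H$ fails to be injective.

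For a concrete witness in case (b), the trek rule immediately yields $\sigma_{ii}=\omega_{ii}$, $\sigma_{ij}=\lambda\omega_{ii}+\omega_{ij}$, and $\sigma_{jj}=\lambda^2\omega_{ii}+2\lambda\omega_{ij}+\omega_{jj}$; for any target $\Sigma\in\mathit{PD}_2$ and every $\lambda$ in a small neighborhood of $\sigma_{ij}/\sigma_{ii}$ one solves uniquely for $\omega_{ij}$ and $\omega_{jj}$, producing a one-parameter family of distinct preimages with $\Omega$ still positive definite. Case (a) is handled analogously, exploiting $\det(I-\Lambda)=1-\lambda_{12}\lambda_{21}$ and solving the two remaining trek-rule equations for the variances. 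The only mild obstacle is to verify that the dimension count (or the explicit construction) stays inside the open locus $\mathbb{R}^D_\mathrm{reg}\times\mathit{PD}(B)$, but this is automatic because $\det(I-\Lambda)\neq 0$ and positive-definiteness are both open conditions.
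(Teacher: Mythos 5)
Your proposal is correct and follows essentially the same route as the paper: reduce via Lemma~\ref{lem:subgraphs} to a two-node subgraph with two edges and then observe that $\phi_H$ maps a four-dimensional domain into the three-dimensional space of symmetric $2\times 2$ matrices, so it cannot be injective. The explicit one-parameter family of preimages you construct in case (b) is a nice (correct) addition, but it is not needed beyond the dimension count that the paper itself uses.
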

\begin{proof}
  If $G$ is not simple then it contains a subgraph $H$ with two nodes
  and two edges.  The map $\phi_H$ is infinite-to-one as it maps a
  four-dimensional domain into the three-dimensional set of symmetric
  $2\times 2$ matrices.  Now apply
  Lemma~\ref{lem:subgraphs}.
\end{proof}

\begin{theorem}
  If $\phi_G$ is injective, then $G$ is {acyclic}.
\end{theorem}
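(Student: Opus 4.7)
The plan is to prove the contrapositive: assuming $G=(V,D,B)$ contains a directed cycle $i_1\toblue i_2\toblue\cdots\toblue i_k\toblue i_1$ with $k\ge 2$, I will show that $\phi_G$ is not injective. By Lemma~\ref{lem:subgraphs}, injectivity passes to subgraphs, so it suffices to exhibit the failure on the subgraph $H$ whose vertex set is $\{i_1,\dots,i_k\}$ and whose only edges are the $k$ directed edges of the cycle (no bidirected edges, no chords). The problem then reduces to producing two distinct pairs $(\Lambda,\Omega)\neq (\Lambda',\Omega')$ in the domain of $\phi_H$ with $\phi_H(\Lambda,\Omega)=\phi_H(\Lambda',\Omega')$.

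For this I will use a uniform, scalar choice of parameters. Pick $\lambda\in\mathbb{R}\setminus\{-1,0,1\}$ with $\lambda^k\neq 1$, and $\omega>0$, and set $\Lambda\in\mathbb{R}^{D_H}$ to have the value $\lambda$ at each of the $k$ cycle positions and zero elsewhere, and set $\Omega=\omega I$. The constraint $\lambda^k\neq 1$ ensures $\det(I-\Lambda)=1-\lambda^k\neq 0$, so indeed $(\Lambda,\Omega)\in\mathbb{R}^{D_H}_{\mathrm{reg}}\times\mathit{PD}(\emptyset)$. It is cleaner to compare concentration matrices, so I will work with
\[
  \Sigma^{-1} \;=\; (I-\Lambda)\Omega^{-1}(I-\Lambda)^T \;=\; \frac{1}{\omega}(I-\Lambda)(I-\Lambda)^T,
\]
which is polynomial in the parameters. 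A direct (sparse) expansion shows that $(I-\Lambda)(I-\Lambda)^T$ has $1+\lambda^2$ on the diagonal, the value $-\lambda$ at exactly the cycle off-diagonal positions (or $-2\lambda$ in the degenerate case $k=2$), and zeros elsewhere. Hence $\Sigma^{-1}$ depends on the parameters only through the two scalars $(1+\lambda^2)/\omega$ and $\lambda/\omega$.

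The decisive step is the substitution $\lambda\mapsto 1/\lambda$, $\omega\mapsto \omega/\lambda^2$. It fixes $\lambda/\omega$ trivially, and it fixes $(1+\lambda^2)/\omega$ because $(1+\lambda^{-2})/(\omega\lambda^{-2})=(1+\lambda^2)/\omega$. It also preserves nonsingularity ($1-\lambda^{-k}\neq 0$) and positivity. Since $\lambda\neq 1/\lambda$, this substitution produces a genuinely distinct pair $(\Lambda',\Omega')$ in the domain of $\phi_H$ with the same image; combined with Lemma~\ref{lem:subgraphs} this contradicts the assumed injectivity of $\phi_G$.

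The main obstacle I anticipate is that no crude dimension count settles the case $k\ge 3$: the cycle contributes $2k$ parameters landing in the $k(k+1)/2$-dimensional space of symmetric matrices, so for $k=3$ these dimensions agree and for $k\ge 4$ the parameter space is strictly smaller. Exhibiting the explicit scalar substitution $\lambda\mapsto 1/\lambda$ bypasses this obstruction by exploiting the rotational symmetry of the uniform cycle parametrization, which gives non-injectivity in a single uniform stroke for every $k\ge 2$.
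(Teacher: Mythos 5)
Your proof is correct, and its core is genuinely different from the paper's. Both arguments begin the same way, invoking Lemma~\ref{lem:subgraphs} to reduce to a pure directed cycle, but from there the paper's proof is only a sketch: the $2$-cycle is dispatched by the dimension count behind Corollary~\ref{cor:global-id-simple}, and for cycles of length $m\ge 3$ it cites the result of \cite{drton:2011} that $\phi_G$ is generically $2$-to-$1$, without exhibiting the second fiber point. You instead produce an explicit witness that works uniformly for every cycle length $k\ge 2$: with $\Lambda=\lambda P$ ($P$ the cyclic shift) and $\Omega=\omega I$, the concentration matrix $(I-\Lambda)\Omega^{-1}(I-\Lambda)^T=\frac{1}{\omega}\bigl[(1+\lambda^2)I-\lambda(P+P^T)\bigr]$ depends only on $(1+\lambda^2)/\omega$ and $\lambda/\omega$, and the substitution $\lambda\mapsto 1/\lambda$, $\omega\mapsto\omega/\lambda^2$ preserves both while keeping $I-\Lambda$ invertible and $\omega>0$; since refuting injectivity (as opposed to generic injectivity) only requires one non-singleton fiber, this settles all cases at once, including $k=2$, without the dimension count. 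What each approach buys: yours is elementary and self-contained, and it makes the hidden symmetry of the cyclic parametrization concrete; the paper's route, by leaning on the generic $2$-to-$1$ theorem, conveys strictly more information (the algebraic degree of identifiability of the cycle, not just failure of injectivity), but at the cost of an external reference. Two small remarks: for real $\lambda\notin\{-1,0,1\}$ the condition $\lambda^k\neq 1$ is automatic, so it need not be imposed separately; and your exchange $\lambda\leftrightarrow 1/\lambda$ is, up to rescaling, the classical non-identifiability of a feedback loop under reversal of the flow around the cycle, which is a nice structural explanation to keep in mind.
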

\begin{proof}[Proof sketch]
  By Lemma~\ref{lem:subgraphs}, we may restrict to studying directed
  cycles $1 \toblue 2\toblue \dots \toblue m \toblue 1$.  The case
  of $m=2$ is covered by Corollary~\ref{cor:global-id-simple}.  If
  $m\ge 3$, then it is possible to show that $\phi_G$ is generically
  $2$-to-$1$, that is, the fiber $\mathcal{F}_G(\Lambda,\Omega)$ is
  generically of size two \cite{drton:2011}.
\end{proof}

It remains to characterize injectivity for acyclic graphs $G=(V,D,B)$.
The following theorem shows that injectivity can be decided in
polynomial time by alternatingly decomposing the bidirected part
$(V,B)$ into connected components and removing sink nodes of the
directed part $(V,D)$.  

\begin{theorem}
  \label{thm:glob-id}
  Suppose $G$ is an acyclic mixed graph.  Then:    
  \begin{enumerate}
  \item[(a)] $\phi_G$ is injective if and only if $G$ does not contain
    a subgraph whose bidirected part is connected and whose
    directed part has a unique sink.
  \item[(b)] If $\phi_G$ is injective, then its inverse is rational
    and $\mathcal{M}_G$ smooth.
  \end{enumerate}
\end{theorem}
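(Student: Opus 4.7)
My plan is to combine the Tian decomposition (Corollary~\ref{cor:id-decomposition}) with induction on $|V|$. For the forward direction of (a), suppose $G$ contains a subgraph $H$ with bidirected part connected on $V(H)$ and unique sink $t$ in its directed part. By Lemma~\ref{lem:subgraphs} it suffices to show that $\phi_H$ itself fails to be injective. The key observation is that because $t$ is the unique sink of the directed part of $H$, the variable $X_t$ does not appear in any structural equation of $H$ other than its own; consequently the parameters $\{\lambda_{jt}: j\to t\in H\}$, $\{\omega_{jt}: j\bi t\in H\}$, and $\omega_{tt}$ enter the covariances of $H$ only through the $t$-row and $t$-column. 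I would exhibit an explicit one-parameter deformation that trades the $\lambda_{\cdot t}$ against the $\omega_{\cdot t}$ and recalibrates $\omega_{tt}$ to leave the entire covariance matrix of $H$ unchanged. The bidirected connectivity of $V(H)$ is essential here: it guarantees the presence of enough bidirected parameters $\omega_{jt}$ to absorb the deformations in the $\lambda_{jt}$.

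For the reverse direction of (a), I would induct on $|V|$. By Corollary~\ref{cor:id-decomposition} it suffices to assume $G$ consists of a single mixed component $G[C]$; otherwise each mixed component has a smaller vertex set and induction applies separately. Since the bidirected part on $C$ is connected, the no-bad-subgraph hypothesis forces the directed subgraph on $C$ to have at least two sinks (otherwise the induced subgraph on $C$ would itself be a bad subgraph); the degenerate case $|C|=1$ is covered by Theorem~\ref{thm:glob-id-dags}. I pick a sink $s\in C$ and, using the existence of another sink $s'\in C$ together with the bidirected connectivity of $C$, I would derive rational formulas for $\{\lambda_{js}: j\in\pa(s)\}$, $\{\omega_{js}: j\bi s\in B\}$, and $\omega_{ss}$ as rational functions of $\Sigma$, in the spirit of the classical IV derivation in Example~\ref{ex:iv:mixedgraph} and of the Schur-complement formulas of Proof~B of Theorem~\ref{thm:glob-id-dags}. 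Removing $s$ yields a smaller acyclic mixed graph $G'$ whose subgraphs are subgraphs of $G$, hence still free of bad subgraphs; the induction hypothesis then identifies the remaining parameters.

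The inductive procedure simultaneously establishes (b): the identification formulas compose into a global rational inverse of $\phi_G$, defined wherever certain principal submatrices of $\Sigma$ are invertible, and every principal submatrix of a positive definite matrix is again positive definite, so the inverse extends to all of $\mathit{PD}_V$. Since $\phi_G$ and this inverse restrict to mutually inverse rational maps between the open semi-algebraic set $\mathbb{R}^D\times\mathit{PD}(B)$ and $\mathcal{M}_G$, the image $\mathcal{M}_G$ is a smooth submanifold of $\mathit{PD}_V$. The main obstacle I anticipate is the rational sink-identification step: showing that, given two distinct sinks $s,s'\in C$ in a connected bidirected component, all parameters incident to $s$ are rational in $\Sigma$. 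A second delicate point is the explicit construction of the non-identifying deformation in the forward direction; while the IV graph of Example~\ref{ex:iv:mixedgraph} provides a clear prototype, extending it to bad $H$ with arbitrarily many directed edges among non-sink vertices requires a careful and uniform parametrization of the perturbation.
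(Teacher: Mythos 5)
Your skeleton --- Lemma~\ref{lem:subgraphs} plus a counterexample for the bad subgraph in one direction, and a decomposition/sink-removal recursion in the other --- is the same outline the paper gives (the paper itself defers the hard work to \cite{drton:2011}). The problem is that at both places where the mathematical content actually lies, your proposal either asserts what is to be proved or relies on a mechanism that fails. For the necessity direction, the proposed deformation cannot work as described. The graph of Figure~\ref{fig:not-injective}(b) is itself a bad subgraph (bidirected part connected, unique sink $4$), but it is simple and acyclic, so by Theorem~\ref{thm:simple-gen-id} its map $\phi_G$ is generically injective and algebraically one-to-one; hence no one-parameter deformation through a generic point can exist, and the unique parent $3$ of the sink has no bidirected edge to $4$, so there is no $\omega_{34}$ against which to trade $\lambda_{34}$. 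Connectedness of the bidirected part of $H$ only guarantees that $t$ has at least one bidirected neighbour; it does not put bidirected edges between $t$ and $\pa(t)$. The actual witness of non-injectivity must be built at special values of the parameters \emph{not} incident to $t$: by Lemma~\ref{lem:gen-id-easier-equations}, $\Lambda_{\pa(t),t}$ solves a linear system whose coefficient matrix depends on the remaining parameters, and one must tune those (in Figure~\ref{fig:not-injective}(b), e.g.\ so that $\omega_{33}+\lambda_{12}\lambda_{23}\omega_{13}=0$) to make this system singular while the covariance matrix stays in the model and $\Omega$ stays positive definite. That tuning --- not absorption of the $\lambda_{jt}$ by the $\omega_{jt}$ --- is where bidirected connectivity enters, and it is the whole content of the counterexample the paper alludes to.

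For the sufficiency direction, the ``rational sink-identification step'' you flag as the main obstacle \emph{is} the theorem; nothing in your outline supplies it. Invoking the IV derivation is a warning sign rather than a guide: the formula~(\ref{eq:iv}) identifies $\lambda_{23}$ only when $\cov[X_1,X_2]\neq 0$, and the IV graph is precisely a graph whose $\phi_G$ is not injective (Example~\ref{ex:iv-generic}). What must be shown is that, under the no-bad-subgraph hypothesis, the linear systems from Lemma~\ref{lem:gen-id-easier-equations} determining $\Lambda_{\pa(s),s}$, and then $\Omega_{\cdot,s}$, are nonsingular for \emph{every} positive definite $\Sigma$, not merely generically; the mere existence of a second sink $s'$ does not yield this, and this uniform nonsingularity is exactly what \cite{drton:2011} proves. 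Part~(b) inherits the gap, since the rational inverse defined on all of $\mathit{PD}_V$ is what that missing argument has to produce. Finally, a repairable but real flaw in the bookkeeping: the mixed components of Theorem~\ref{thm:tian-decomposition} need not have fewer vertices --- $V[C]$ can equal $V$, as for the Verma graph of Figure~\ref{fig:recap} --- so ``each mixed component has a smaller vertex set'' is false and your induction measure has to be replaced (e.g.\ by $|C|$, or by first peeling sinks via Theorem~\ref{thm:ancestral-subgraphs}); but even with that fixed, the two gaps above remain.
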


Figure~\ref{fig:not-injective} illustrates the characterization in
part (a) of the theorem.  The fact that $\phi_G$ is not injective if
the combinatorial condition in (a) fails can be shown by a
counterexample for the particular subgraph and then invoking
Lemma~\ref{lem:subgraphs}.  For a proof of the sufficient condition in
Theorem~\ref{thm:glob-id}, we refer the reader to \cite{drton:2011}.
We note that the sufficiency of the condition for injectivity can be
proven by repeatedly applying the graph decomposition result in
Theorem~\ref{thm:tian-decomposition} and the result on ancestral
subgraphs from Theorem~\ref{thm:ancestral-subgraphs}.  These results
as well as Theorem~\ref{thm:glob-id} have generalizations to nonlinear
structural equation models \cite{shpitser:2006,tian:pearl:2002}.

  \begin{figure}[t]
    \centering
    \begin{tabular}{ll}
      (a) \scalebox{0.9}{
        \begin{tikzpicture}[->,>=triangle 45, shorten >=0pt,
          auto,thick, main node/.style={circle,inner
            sep=2pt,fill=gray!20,draw,font=\sffamily}]
          
          \node[main node,rounded corners] (1) {1}; \node[main
          node,rounded corners] (2) [below right=.75cm and .75cm of 1]
          {2}; \node[main node,rounded corners] (3) [right=2.5cm of 1]
          {3}; \node[main node,rounded corners] (4) [below
          right=0.75cm and .75cm of 3] {4};
          
          \path[color=black!20!blue,every
          node/.style={font=\sffamily\small}] (1) edge node {} (2) (1)
          edge node {} (4) (2) edge node {} (3) (3) edge node {} (4);
          \path[color=black!20!red,<->,every
          node/.style={font=\sffamily\small}] (1) edge node {} (3) (2)
          edge node {} (4);
        \end{tikzpicture}
      } 
      &
        \qquad (b) \scalebox{0.9}{
            \begin{tikzpicture}[->,>=triangle 45, shorten >=0pt,
              auto,thick, main node/.style={circle,inner
                sep=2pt,fill=gray!20,draw,font=\sffamily}]
          
              \node[main node,rounded corners] (1) {1}; \node[main
              node,rounded corners] (2) [below right=.75cm and .75cm
              of 1] {2}; \node[main node,rounded corners] (3)
              [right=2.5cm of 1] {3}; \node[main node,rounded corners]
              (4) [below right=0.75cm and .75cm of 3] {4};
          
              \path[color=black!20!blue,every
              node/.style={font=\sffamily\small}] (1) edge node {} (2)
              (2) edge node {} (3) (3) edge node {} (4);
              \path[color=black!20!red,<->,every
              node/.style={font=\sffamily\small}] (1) edge node {} (3)
              (1) edge node {} (4) (2) edge node {} (4);
            \end{tikzpicture}
          }      
    \end{tabular}
        \caption{(a) A mixed graph for which the parametrization $\phi_G$
          is injective.  (b) A graph for which $\phi_G$ is not injective.}
        \label{fig:not-injective}
      \end{figure}
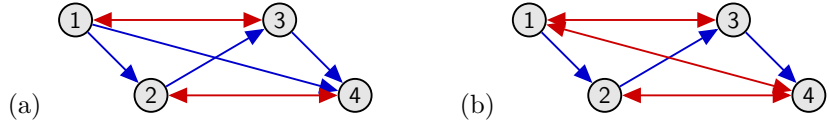

\section{Generic Identifiability}
\label{sec:gener-ident}

The difference between injectivity and generic injectivity of $\phi_G$
may appear minute.  However, the two properties are quite different,
and failure of generic injectivity cannot be argued by studying
subgraphs (as in Lemma~\ref{lem:subgraphs}).  According to
Corollary~\ref{cor:global-id-simple}, a mixed graph $G$ can have the
map $\phi_G$ injective only if it is acyclic and simple.  The deeper
issue is then to find out which simple acyclic mixed graphs have
$\phi_G$ injective.  In contrast, the next result shows that all
simple acyclic mixed graphs are generically injective.  The deeper
issue for generic injectivity is thus the treatment of graphs that
contain directed cycles or are not simple.

\begin{theorem}
  \label{thm:simple-gen-id}
  If $G=(V,D,B)$ is acyclic and simple, then $\phi_G$ is generically
  injective and algebraically one-to-one.
\end{theorem}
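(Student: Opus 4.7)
I would prove the theorem by induction on $|V|$, combining Tian's decomposition (Theorem~\ref{thm:tian-decomposition} together with Corollary~\ref{cor:id-decomposition}) and the ancestral subgraph identity (Theorem~\ref{thm:ancestral-subgraphs}). The inductive hypothesis is the statement of the theorem itself for graphs on fewer than $|V|$ vertices. For the inductive step, I first reduce to a single mixed component: by Corollary~\ref{cor:id-decomposition} the algebraic degree of identifiability of $G$ factors as a product over the mixed components $G[C]$, $C\in\mathcal{C}(G)$, and simplicity and acyclicity are inherited by each $G[C]$ (edges of $G[C]$ are a subset of those of $G$, and its directed part is a subgraph of the directed part of $G$). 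So it suffices to prove algebraic $1$-to-$1$ when $G$ itself is a single mixed component, i.e.\ $(V,B)$ is connected, every directed edge has head in $C$, and the sources $V\setminus C$ have variances pinned to $1$.

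Since $G$ is acyclic, pick a sink $v$ of its directed part, necessarily in $C$. Then $V\setminus\{v\}$ is ancestral, so Theorem~\ref{thm:ancestral-subgraphs} gives $\Sigma_{V\setminus\{v\},V\setminus\{v\}}=\phi_{G_{V\setminus\{v\}}}(\Lambda',\Omega')$ for the principal-submatrix parameters $\Lambda'$ and $\Omega'$. As $G_{V\setminus\{v\}}$ is still simple and acyclic and has fewer vertices, the inductive hypothesis recovers $\Lambda'$ and $\Omega'$ generically and uniquely from this submatrix. It remains to recover the parameters attached to $v$: the coefficients $\lambda_{uv}$ for $u\in\pa(v)$, the covariances $\omega_{vw}$ for $v\bi w\in B$, and the variance $\omega_{vv}$.

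For the recovery step, I invoke the trek rule. Because $v$ is a directed-sink, every non-trivial trek from $v$ to $w\in V\setminus\{v\}$ must leave $v$ either along a single directed edge $v\leftarrow u$ with $u\in\pa(v)$ or along a single bidirected top-edge $v\bi w'$ with $w'\bi v\in B$; acyclicity and the sink property then prevent such treks from revisiting $v$, so their right-hand sides are treks in $G_{V\setminus\{v\}}$. Grouping accordingly yields, for each $w\in V\setminus\{v\}$, the linear equation
$$\sigma_{v,w} \;=\; \sum_{u\in\pa(v)} \lambda_{uv}\,\sigma'_{u,w} \;+\; \sum_{w'\,:\, v\bi w'} \omega_{v,w'}\,\bigl((I-\Lambda')^{-1}\bigr)_{w',w},$$
whose coefficients are rational functions of the already-recovered $\Lambda'$ and $\Omega'$. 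Once the $\lambda_{uv}$ and $\omega_{v,w'}$ are identified, $\omega_{vv}$ follows from $\sigma_{vv}$ by subtracting known terms.

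The main obstacle is showing this linear system has full column rank for generic $(\Lambda',\Omega')$, and this is the only place where the simplicity hypothesis truly enters. Simplicity forces $\pa(v)\cap\{w':v\bi w'\}=\emptyset$ (no bows at $v$), so the unknowns at $v$ are indexed by distinct vertices in $V\setminus\{v\}$ and number at most $|V|-1$, matching the $|V|-1$ equations. To certify generic full column rank I would exhibit an explicit specialization: setting $\Lambda'=0$ and $\Omega'=I$ (admissible since $I\in\mathit{PD}(B')$ trivially), the column for $u\in\pa(v)$ collapses to $e_u$ and the column for $w'$ with $v\bi w'$ collapses to $e_{w'}$, so by simplicity the columns are distinct standard basis vectors and hence linearly independent. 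Zariski-closedness of the full-rank locus then gives generic full column rank, the linear system has a unique (complex) solution, and the induction closes.
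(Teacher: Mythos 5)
Your proposal is correct, but it takes a genuinely different route from the paper's. The paper argues in one pass, without induction or decomposition: by Lemma~\ref{lem:gen-id-easier-equations} the fiber is cut out by the bilinear equations $\left[(I-\Lambda)^T\Sigma(I-\Lambda)\right]_{\pa(i),i}=0$ (available precisely because simplicity makes every parent of $i$ a non-sibling of $i$), these are rewritten as linear systems $\left[(I-\Lambda)^T\Sigma\right]_{\pa(i),\pa(i)}\Lambda_{\pa(i),i}=\left[(I-\Lambda)^T\Sigma\right]_{\pa(i),i}$ and solved for the columns of $\Lambda$ recursively along a topological order, with $\Omega$ then recovered as $(I-\Lambda)^T\Sigma(I-\Lambda)$; generic unique solvability is certified by the same specialization you use, $\Lambda=0$, $\Omega=I$, $\Sigma=I$. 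You instead peel off a sink $v$, use Theorem~\ref{thm:ancestral-subgraphs} and the inductive hypothesis to recover the parameters of the induced subgraph on $V\setminus\{v\}$, and then solve a single $(|V|-1)$-row linear system jointly for the $\lambda_{uv}$, $u\in\pa(v)$, and the $\omega_{vw'}$, $v\bi w'\in B$; your trek-rule derivation of that system is valid (acyclicity plus the sink property do keep the residual treks inside $V\setminus\{v\}$), simplicity makes the unknowns indexed by distinct vertices so the $\Sigma=I$ specialization yields distinct standard basis columns, and genericity transfers through the dominant projection $(\Lambda,\Omega)\mapsto(\Lambda_{A,A},\Omega_{A,A})$. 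The paper's route buys economy and makes visible that only the constraints $\omega_{ji}=0$ for $j\in\pa(i)$ are needed; yours buys an explicit identification of the error covariances at each sink and shows how Theorems~\ref{thm:tian-decomposition} and~\ref{thm:ancestral-subgraphs} can drive identifiability arguments, in the spirit of the decomposition-based strengthenings discussed at the end of Section~\ref{sec:gener-ident}. One caveat: the opening reduction via Corollary~\ref{cor:id-decomposition} is unnecessary (your sink-removal induction applies verbatim to any simple acyclic mixed graph) and, as phrased, slightly misquotes it --- the corollary concerns the unpinned maps $\phi_{G[C]}$, whereas you restrict to $\mathit{PD}_I(B[C])$ with source variances pinned to $1$, which would force an extra argument that genericity survives restriction to that subspace; the cleanest fix is simply to drop the decomposition step.
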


The theorem is due to \cite{brito:2002}.  It shows that the graph from
Figure~\ref{fig:not-injective}(b) has $\phi_G$ generically injective,
but not injective.  A short proof of Theorem~\ref{thm:simple-gen-id}
is obtained through the following observation that we use repeatedly
in this section.

\begin{lemma}
  \label{lem:gen-id-easier-equations}
  Let $G=(V,D,B)$ be a mixed graph, and let
  $\Sigma=\phi_G(\Lambda_0,\Omega_0)$ for
  $\Lambda_0\in\mathbb{R}^D_\mathrm{reg}$ and
  $\Omega_0\in\mathit{PD}(B)$.  The fiber
  $\mathcal{F}_G(\Lambda_0,\Omega_0)$ is isomorphic to the set of
  matrices $\Lambda\in\mathbb{R}^D_\mathrm{reg}$ that solve the
  equation system
  \begin{equation}
    \label{eq:gen-id-easier-equations}
    \left[(I-\Lambda)^T\Sigma(I-\Lambda)\right]_{ij} \;=\; 0, \qquad
    i\not=j, \; i\bi j\notin B.
  \end{equation}
\end{lemma}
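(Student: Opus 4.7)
The plan is to show that the projection $(\Lambda,\Omega)\mapsto\Lambda$ furnishes the claimed isomorphism. The point is that once $\Sigma$ is fixed, $\Omega$ is determined by $\Lambda$ via the inverted parametrization. Indeed, from the defining relation $\Sigma=(I-\Lambda)^{-T}\Omega(I-\Lambda)^{-1}$ on $\mathbb{R}^D_{\mathrm{reg}}\times\mathit{PD}(B)$, invertibility of $I-\Lambda$ immediately yields the equivalent rewriting
\[
\Omega \;=\; (I-\Lambda)^T\Sigma(I-\Lambda).
\]
I would take this as the backbone of the argument.

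First I would verify that the projection $\pi:(\Lambda,\Omega)\mapsto\Lambda$ maps $\mathcal{F}_G(\Lambda_0,\Omega_0)$ into the solution set of~(\ref{eq:gen-id-easier-equations}). Given $(\Lambda,\Omega)$ in the fiber, $\Omega$ must lie in $\mathit{PD}(B)$, so $\omega_{ij}=0$ whenever $i\neq j$ and $i\bi j\notin B$; combined with the displayed identity this is exactly the equation system~(\ref{eq:gen-id-easier-equations}). Injectivity of $\pi$ is immediate since $\Omega$ is a function of $\Lambda$ and $\Sigma$.

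Next I would construct the inverse. Given $\Lambda\in\mathbb{R}^D_{\mathrm{reg}}$ solving~(\ref{eq:gen-id-easier-equations}), define $\Omega:=(I-\Lambda)^T\Sigma(I-\Lambda)$. The required zero-pattern of $\Omega$ over non-edges of $B$ follows from~(\ref{eq:gen-id-easier-equations}); symmetry follows from symmetry of $\Sigma$; and positive definiteness follows because $\Omega$ is a congruence of the positive definite matrix $\Sigma$ by the invertible matrix $I-\Lambda$. Thus $\Omega\in\mathit{PD}(B)$, and by construction $\phi_G(\Lambda,\Omega)=\Sigma=\phi_G(\Lambda_0,\Omega_0)$, so $(\Lambda,\Omega)\in\mathcal{F}_G(\Lambda_0,\Omega_0)$. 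This map is clearly a two-sided inverse of $\pi$, establishing the isomorphism.

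There is no serious obstacle here; the content is simply that \emph{which} $\Omega$ appears in the fiber is forced by $\Lambda$, so studying the fiber reduces to a polynomial system in the $\lambda_{ij}$ alone. The bookkeeping to track is only that~(\ref{eq:gen-id-easier-equations}) indexes precisely the off-diagonal non-edges of the bidirected part, which is exactly the support restriction built into $\mathit{PD}(B)$. The fact that the inverse map $\Lambda\mapsto(\Lambda,(I-\Lambda)^T\Sigma(I-\Lambda))$ is polynomial in the entries of $\Lambda$ may be recorded in passing, since it will be useful when counting complex solutions via Gr\"obner basis methods in the sequel.
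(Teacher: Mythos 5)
Your proposal is correct and follows essentially the same route as the paper: project the fiber via $(\Lambda,\Omega)\mapsto\Lambda$, note that $\Omega=(I-\Lambda)^T\Sigma(I-\Lambda)$ is forced once $\Sigma$ is fixed, and invert by this very formula, with $\Omega\in\mathit{PD}(B)$ guaranteed by the equation system together with congruence by the invertible matrix $I-\Lambda$. Your write-up is, if anything, slightly more explicit than the paper's about symmetry and positive definiteness of the recovered $\Omega$.
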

\begin{proof}
  The projection $(\Lambda,\Omega)\mapsto\Lambda$ maps
  $\mathcal{F}_G(\Lambda_0,\Omega_0)$ to the set of matrices
  $\Lambda\in\mathbb{R}^D_\mathrm{reg}$ that solve the equations
  in~(\ref{eq:gen-id-easier-equations}).  Indeed, as $I-\Lambda$ is
  invertible for $\Lambda\in\mathbb{R}^D_\mathrm{reg}$,
  \[      \Sigma=\phi_G(\Lambda,\Omega)=(I-\Lambda)^{-T}\Omega(I-\Lambda) 
    \;\implies\;
    \Omega=(I-\Lambda)^T\Sigma(I-\Lambda).
  \]
  Any entry of $\Omega$ that is indexed by a pair $(i,j)$ with
  $i\not=j$ and $i\bi j\notin B$ is zero.  Conversely, if
  $\Lambda\in\mathcal{F}_G(\Lambda)$, then
  $(\Lambda,(I-\Lambda)^T\Sigma(I-\Lambda))\in\mathcal{F}_G(\Lambda,\Omega)$.
\end{proof}

We emphasize that the equations in~(\ref{eq:gen-id-easier-equations})
are bilinear as
\begin{equation*}
  \left[(I-\Lambda)^T\Sigma(I-\Lambda)\right]_{ij} 
  \;=\;
  \sigma_{ij} - \sum_{k\in\pa(i)} \lambda_{ki}\sigma_{ki} -
  \sum_{l\in\pa(j)} \sigma_{il}\lambda_{lj} + \sum_{k\in\pa(i)}\sum_{l\in\pa(j)} \lambda_{ki}\sigma_{kl}\lambda_{lj}.
\end{equation*}

\begin{proof}[Proof of Theorem~\ref{thm:simple-gen-id}]
  Because $G$ is acyclic, we may enumerate the vertex set in a
  topological order as $V=\{1,\dots,m\}$.  Then
  $\pa(i)\subseteq\{1,\dots,i-1\}$ for $i=1,\dots,m$.  Moreover,
  because $G$ is simple, $j\in\pa(i)$ implies that $j\bi i\notin B$.
  By Lemma~\ref{lem:gen-id-easier-equations},
  \[
    \left[(I-\Lambda)^T\Sigma(I-\Lambda)\right]_{\pa(i),i} \;=\; 0,
    \qquad i=1,\dots,m.
  \]
  These equations can be rewritten as
  \begin{equation}
    \label{eq:acyclic-simple-recursive}
    \left[(I-\Lambda)^T\Sigma\right]_{\pa(i),\pa(i)}
    \Lambda_{\pa(i),i} \;=\; \left[(I-\Lambda)^T\Sigma\right]_{\pa(i),i},
    \qquad i=1,\dots,m.
  \end{equation}
  By the tolopogical order, if $j\in\pa(i)$, then the entries in the
  $j$-th row of $(I-\Lambda)^T\Sigma$ depend only on the first $i-1$
  columns of $\Lambda$.  The system
  in~(\ref{eq:acyclic-simple-recursive}) can thus be solved
  recursively, where each step requires solving a linear system.

  To show that $\phi_G$ is generically injective, it remains to argue
  that the equations in~(\ref{eq:acyclic-simple-recursive})
  generically have a unique solution.  It suffices to exhibit a single
  pair $(\Lambda,\Omega)$ for which this is true.  We may choose
  $\Lambda=0$ and $\Omega=I$, so $\Sigma=I$.  Then the
  matrix for the $i$-th group of equations
  in~(\ref{eq:acyclic-simple-recursive}) is $\Sigma_{\pa(i),\pa(i)}$,
  which is invertible.
\end{proof}

Although a combinatorial characterization of the graphs with
generically injective parametrization $\phi_G$ is not known, Gr\"obner
basis techniques can be used to determine the degree of
identifiability from Definition~\ref{def:id-degree} and, thus, decide
whether $\phi_G$ is algebraically one-to-one.  Gr\"obner bases are
computationally tractable for non-trivial examples and have been used
for a classification of all graphs with up to 5 nodes
\cite{foygel:draisma:drton:2012}.  For larger graphs, algebraic
methods can be applied after decomposition according to
Theorem~\ref{thm:tian-decomposition}.  

We describe two options for the computation.  In either case, we
advocate working with the equation system
from~(\ref{eq:gen-id-easier-equations}) as opposed to the fiber
equation $\Sigma=\phi_G(\Lambda,\Omega)$.
System~(\ref{eq:gen-id-easier-equations}) has $\Omega$ eliminated and
may be far more compact as it avoids the inversion of $I-\Lambda$.
This said, although system~(\ref{eq:gen-id-easier-equations}) is
polynomial also for graphs that contain directed cycles, care must be
taken to avoid spurios solutions with $I-\Lambda$ non-invertible.

The first possibility is to perform a parametric Gr\"obner basis
computation.  We introduce a matrix $\Lambda$ whose nonzero entries
$\lambda_{ij}$, $i\to j\in D$, are indeterminates and a pair of
matrices $(\Lambda_0,\Omega_0)$ that are parameters.  We form the
matrix $(I-\Lambda)^T\phi_G(\Lambda_0,\Omega_0)(I-\Lambda)$ and set to
zero the off-diagonal entries indexed by non-edges of the bidirected
part $(V,B)$.  We then compute a Gr\"obner basis for the resulting
system in the polynomial ring with coefficients in the field of
rational fractions $\mathbb{R}(\Lambda_0,\Omega_0)$.  The Gr\"obner
basis readily yields the dimension of the generic fibers.  If the
dimension is finite we may also find the algebraic degree of the
generic fibers, which is what we referred to as degree of
identifiability.  When the graph $G$ contains directed cycles, the
matrix $I-\Lambda$ can be non-invertible.  We thus first saturate our
equation system with respect to $\det(I-\Lambda)$.

\begin{example}
  The following code for the system {\sc Singular} \cite{Singular}
  implements the approach just described for a directed 3-cycle:
  \begin{small}
\begin{verbatim}
LIB "linalg.lib"; option(redSB);
ring R = (0,l012,l023,l031,w011,w022,w033),(l12,l23,l31),dp;
matrix L[3][3] = 1,-l12,0,  
                 0,1,-l23,
                 -l31,0,1;
matrix L0[3][3] = 1,-l012,0,
                  0,1,-l023,
                  -l031,0,1;
matrix W0[3][3] = w011,0,0,
                  0,w022,0,
                  0,0,w033;
matrix W[3][3] = transpose(L)*inverse(transpose(L0))*W0*inverse(L0)*L;
ideal GB = sat(ideal(W[1,2],W[1,3],W[2,3]), det(L))[1];
dim(GB); mult(GB);
\end{verbatim}
  \end{small}
  The output from the last line first certifies that the fibers are
  generically zero-dimensional, that is, contain finitely many
  points.  The multiplicity computed with the last command shows the
  degree of identifiability to be two.
\end{example}

The second possibility is to consider only polynomials with
real-valued coefficients but to introduce as polynomial variables the
nonzero entries of $\Lambda$ as well as a symmetric matrix $\Sigma$.
These variables are ordered with respect to a block monomial order in
which the variables in $\Lambda$ are larger than the variables in
$\Sigma$.  Let $\mathcal{I}$ be the ideal generated by the
off-diagonal entries of $(I-\Lambda)^T\Sigma(I-\Lambda)$ that are
indexed by the non-edges of $(V,B)$.  Saturate $\mathcal{I}$ with
respect to $\det(I-\Lambda)$.  Let $\mathcal{J}$ be the reduced
Gr\"obner basis of the resulting ideal.  Elimination theory yields the
following \cite[Section 8 of the supplemental material]{foygel:draisma:drton:2012}.

\begin{proposition}
  \label{prop:leading-terms}
  The parametrization $\phi_G$ of a mixed graph $G=(V,D,B)$ is
  algebraically one-to-one if and only if for each $i\to j\in D$, the
  reduced Gr\"obner basis $\mathcal{J}$ contains an element with
  leading monomial $a(\Sigma)\lambda_{ij}$.
\end{proposition}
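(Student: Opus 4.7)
The plan is to interpret the algebraic degree of identifiability of $G$ as the dimension of an explicit quotient ring over the field $\mathbb{C}(\Sigma)$, and then to read that dimension off the leading monomials of the reduced Gr\"obner basis $\mathcal{J}$.

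First I will apply Lemma~\ref{lem:gen-id-easier-equations} to identify the fibers of $\phi_G$, counted generically over $\mathbb{C}$, with the solution sets in $\Lambda$ of the polynomial system generating $\mathcal{I}$, restricted to $\det(I-\Lambda)\not=0$. Consequently, the algebraic degree of $\phi_G$ in the sense of Definition~\ref{def:id-degree} equals the generic cardinality of the fibers of the projection from $V(\mathcal{I}')$ to the symmetric-matrix space in $\Sigma$, where $\mathcal{I}'=\mathcal{I}:\det(I-\Lambda)^\infty$ is the saturated ideal used in the construction of $\mathcal{J}$. Standard fiber-dimension theory then translates this cardinality into $\dim_{\mathbb{C}(\Sigma)} R/\mathcal{I}'R$, where $R=\mathbb{C}(\Sigma)[\Lambda]$.

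Second, I will exploit the block monomial order. Since $\mathcal{J}$ is a reduced Gr\"obner basis for $\mathcal{I}'$ with the $\Lambda$-variables placed above the $\Sigma$-variables, clearing $\Sigma$-denominators produces a Gr\"obner basis of $\mathcal{I}'R$ inside $R$. An element $g\in\mathcal{J}$ with block-order leading monomial $a(\Sigma)\lambda_{ij}$ becomes, after rescaling by $a(\Sigma)^{-1}\in\mathbb{C}(\Sigma)$, a Gr\"obner basis element in $R$ whose $\Lambda$-leading monomial is exactly $\lambda_{ij}$. Hence the $\Lambda$-leading monomial ideal of $\mathcal{I}'R$ contains $(\lambda_{ij}:i\to j\in D)$ precisely when every such edge has a leading-monomial element in $\mathcal{J}$ of the stated form. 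The quotient $R/\mathcal{I}'R$ then has standard-monomial basis $\{1\}$ exactly when that containment is an equality; by the first step this is equivalent to $\phi_G$ being algebraically one-to-one.

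The main technical care lies in the interplay between the $\det(I-\Lambda)$-saturation (which excises spurious solutions with $I-\Lambda$ non-invertible) and the $\Sigma$-localization (which extracts the generic behavior over $\Sigma$). Because $\det(I-\Lambda)$ depends only on the $\Lambda$-variables, these two operations commute, so the generic fiber count deduced from $\mathcal{J}$ over $\mathbb{C}(\Sigma)$ really equals the algebraic degree over $\mathbb{R}^D_\mathrm{reg}\times \mathit{PD}(B)$. A secondary subtlety is that a reduced Gr\"obner basis element could, a priori, have leading monomial of the shape $a(\Sigma)\lambda_{ij}^k$ or $a(\Sigma)\lambda_{ij}\lambda_{kl}$ rather than the stand-alone shape $a(\Sigma)\lambda_{ij}$; requiring the latter for every edge is exactly what pins the leading ideal of $\mathcal{I}'R$ down to the maximal ideal $(\lambda_{ij})_{i\to j\in D}$ and thereby forces the degree of identifiability to equal one.
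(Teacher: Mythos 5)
Your reduction breaks at the very first step, because it implicitly assumes that the projection of $V(\mathcal{I}')$ onto the $\Sigma$-coordinates is dominant, i.e.\ that $\mathcal{I}'\cap\mathbb{R}[\Sigma]=0$. In general this elimination ideal is nonzero: it is (essentially) the vanishing ideal of the model, which is exactly how $\mathcal{I}(G)$ is computed in Section~\ref{sec:implicitization}. Whenever $\mathcal{I}(G)\neq 0$, any nonzero $f(\Sigma)\in\mathcal{I}'\cap\mathbb{R}[\Sigma]$ becomes a unit in $R=\mathbb{C}(\Sigma)[\Lambda]$, so $\mathcal{I}'R=R$, the quotient $R/\mathcal{I}'R$ is zero, and its leading-monomial ideal contains $1$; your chain of equivalences (criterion $\Leftrightarrow$ leading ideal is $(\lambda_{ij}:i\to j\in D)$ $\Leftrightarrow$ $\dim_{\mathbb{C}(\Sigma)}R/\mathcal{I}'R=1$ $\Leftrightarrow$ degree one) then fails on both ends. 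This is not a corner case but the typical situation: for the Verma graph of Figure~\ref{fig:recap}, $\phi_G$ is algebraically one-to-one by Theorem~\ref{thm:simple-gen-id}, yet $f_\mathrm{Verma}$ from~(\ref{eq:f-verma}) lies in $\mathcal{I}'\cap\mathbb{R}[\Sigma]$, so in your setup $\dim_{\mathbb{C}(\Sigma)}R/\mathcal{I}'R=0\neq 1$ and the leading ideal of $\mathcal{I}'R$ contains all $\lambda_{ij}$ only through the unit coming from $f_\mathrm{Verma}$, not through elements with leading monomial $a(\Sigma)\lambda_{ij}$. The generic fiber relevant to Definition~\ref{def:id-degree} lies over a generic point of the \emph{image} $\mathcal{M}_G$, i.e.\ over the fraction field $K$ of $\mathbb{R}[\Sigma]/(\mathcal{I}'\cap\mathbb{R}[\Sigma])$ (equivalently, after substituting $\Sigma=\phi_G(\Lambda_0,\Omega_0)$ with generic parameters, as in the paper's first computational approach), not over the field in which all $\sigma_{ij}$ are independent transcendentals.

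Once $\mathbb{C}(\Sigma)$ is replaced by $K$, the easy ``clear denominators'' transfer of leading monomials is no longer available, and that is where the real content of the proposition sits: passing from the reduced Gr\"obner basis $\mathcal{J}\subset\mathbb{R}[\Lambda,\Sigma]$ to the fiber ring over $K$ is a specialization (reduce modulo $\mathcal{I}'\cap\mathbb{R}[\Sigma]$, then localize), under which elements of $\mathcal{J}\cap\mathbb{R}[\Sigma]$ vanish rather than become units and a leading coefficient $a(\Sigma)$ could a priori die (if $a\in\mathcal{I}(G)$), destroying the corresponding leading monomial. Proving that the elements with leading monomial $a(\Sigma)\lambda_{ij}$ do survive this specialization --- using reducedness of $\mathcal{J}$ and the block elimination order --- is the elimination-theoretic argument the paper delegates to Section~8 of the supplement of \cite{foygel:draisma:drton:2012}, and it is missing from your proposal. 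A secondary gap: $\dim_K$ of the fiber ring counts solutions with multiplicity, while Definition~\ref{def:id-degree} counts distinct complex solutions, so the ``only if'' direction additionally needs a generic-reducedness argument (or one must otherwise rule out a generically fat point), since $\mathcal{I}'$ is not known to be radical.
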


More generally, the generic dimension and degree of the fibers
$\mathcal{F}_G(\Lambda,\Omega)$ can be determined by analyzing the
monomials under the staircase of the initial ideal of $\mathcal{J}$
\cite[Chapter 9]{cox:2007}.  This way we may determine the degree of
identifiability of $G$.  In comparison to the first approach, the
second method yields relations that show how to identify coefficients
$\lambda_{ij}$ from $\Sigma$.

\begin{example}
  Treating again a directed 3-cycle, we give an example of the second
  type of computation in {\sc Singular}:
  \begin{small}
\begin{verbatim}
LIB "linalg.lib"; option(redSB);
ring R = 0,(l12,l23,l31,s11,s12,s13,s22,s23,s33),(dp(3));
matrix L[3][3] = 1,-l12,0,
                 0,1,-l23,
                 -l31,0,1;
matrix S[3][3] = s11,s12,s13,
                 s12,s22,s23,
                 s13,s23,s33;
matrix W[3][3] = transpose(L)*S*L;
ideal GB = sat(ideal(W[1,2],W[1,3],W[2,3]), det(L))[1]; GB;
\end{verbatim}
  \end{small}
  The output is a list of 9 polynomials whose leading terms are, in
  our usual notation, 
  \begin{align*}
    \lambda_{23}\lambda_{31}\sigma_{23}, &&
    \lambda_{12}\lambda_{31}\sigma_{13}, &&
    \lambda_{12}\lambda_{23}\sigma_{12}, &&
    \lambda_{12}\lambda_{31}\sigma_{12}\sigma_{33},\\
    \lambda_{12}\sigma_{11}\sigma_{13}\sigma_{23}, &&
    \lambda_{12}\lambda_{23}\sigma_{11}\sigma_{23}, &&
    \lambda_{23}\sigma_{12}\sigma_{13}\sigma_{22}, &&
    \lambda_{23}\lambda_{31}\sigma_{13}\sigma_{22}, &&
    \lambda_{31}^2\sigma_{13}\sigma_{22}\sigma_{33}.
  \end{align*}
  By Proposition~\ref{prop:leading-terms}, $\phi_G$ is not
  algebraically one-to-one because there is no leading term of the
  form $\lambda_{31}a(\Sigma)$.  The last leading term belongs to a
  polynomial that shows that $\lambda_{31}$ is algebraic function of
  degree 2 of the covariance matrix $\Sigma$ because it solves the
  equation
  \begin{multline*}
    \lambda_{31}^2\sigma_{33}(\sigma_{13}\sigma_{22}-\sigma_{12}\sigma_{23})
    -\lambda_{31}(\sigma_{13}^2\sigma_{22}-\sigma_{11}\sigma_{23}^2-\sigma_{12}^2\sigma_{33}+\sigma_{11}\sigma_{22}\sigma_{33})\\
    +\sigma_{11}(\sigma_{13}\sigma_{22}-\sigma_{12}\sigma_{23}) 
    \;=\;0.
  \end{multline*}
  The equations with leading terms
  $\lambda_{23}\sigma_{12}\sigma_{13}\sigma_{22}$ and
  $\lambda_{12}\sigma_{11}\sigma_{13}\sigma_{23}$ show that
  $\lambda_{23}$ and $\lambda_{12}$ are rational functions of $\Sigma$
  and $\lambda_{31}$.  Altogether, we have verified that $\phi_G$ is
  algebraically 2-to-one.  Checking this by counting monomials under
  the staircase means considering the leading monomials considering
  only the variables $\lambda_{12},\lambda_{23},\lambda_{31}$ we seek
  to solve for.  The monomials are
  \begin{align}
    \label{eq:9monomials}
    \lambda_{23}\lambda_{31}, &&
    \lambda_{12}\lambda_{31}, &&
    \lambda_{12}\lambda_{23}, &&
    \lambda_{12}, &&
    \lambda_{23}, &&
    \lambda_{31}^2,
  \end{align}
  and generate the ideal
  $\mathcal{I}=\langle
  \lambda_{12},\lambda_{23},\lambda_{31}^2\rangle$.  The monomials
  under the staircase are the monomials in
  $\mathbb{R}[\lambda_{12},\lambda_{23},\lambda_{31}]\setminus
  \mathcal{I}$.  There are two, namely, $1$
  and $\lambda_{31}$.
%
\end{example}

Although Gr\"obner basis methods can be effective, it is desirable to
obtain combinatorial methods that are efficient also for large-scale
problems.  The half-trek criteria of \cite{foygel:draisma:drton:2012}
are state-of-the-art criteria that can be checked in time that is
polynomial in the size of the vertex set of the considered graph.
They provide a sufficient as well as a necessary condition for generic
injectivity of $\phi_G$.  More precisely, there is a condition that
is sufficient for $\phi_G$ to be algebraically one-to-one and a
related condition that is neessary for $\phi_G$ to be generically
finite-to-one.  The conditions are implemented in a package for the R
project for statistical computing \cite{semid}.  We begin our
discussion of the half-trek criteria by introducing some needed
terminology.

A \emph{half-trek} from source node $i$ to target node $j$ is a trek $\tau$
from $i$ to $j$ whose left-hand side is a singleton set, so
$\lhs{\tau}=\{i\}$.  In other words, a half-trek is of the form
\[
  i \toblue j_1 \toblue\dots\toblue j_r \to j \quad\text{or}\quad i \bi
  j_1 \toblue\dots\toblue j_r \to j.
\]
Let $X,Y\subseteq V$ be two sets of nodes of equal cardinality
$|X|=|Y|=k$.  Let $\Pi$ be a set of $k$ treks.  Then $\Pi$ is a system
of treks from $X$ to $Y$, denoted $\Pi:X\rightrightarrows Y$, if $X$
is the set of source nodes of the treks in $\Pi$ and $Y$ is the set of
target nodes.  Note that we allow $X\cap Y\not=\emptyset$.  The system
$\Pi$ is a system of half-treks if every trek $\pi_i$ is a half-trek.
Finally, a system $\Pi$ has no sided intersection if
\[
\lhs{\pi}\cap\lhs{\pi'}=\emptyset=\rhs{\pi}\cap\rhs{\pi'}
\]
for all pairs of treks $\pi,\pi'\in\Pi$.

\begin{definition}
  \label{def:htc}
  A set $Y\subseteq V$ satisfies the \emph{half-trek criterion} with
  respect to node $i$ if (i) $|Y|=|\pa(i)|$, (ii) $j=i$ or $j\bi i$
  implies that $j\not\in Y$, and (iii) there exists a system of
  half-treks $\Pi:Y\rightrightarrows\pa(i)$ that has no sided
  intersection.
\end{definition}

\begin{theorem} \label{thm:htc-id} 
  Let $G=(V,D,B)$ be a mixed graph.
  \begin{enumerate}
  \item[(i)] If for every $i\in V$ there exists a set $Y_i\subseteq V$
    that satisfies the half-trek criterion with respect to $i$, and if
    there exists a total ordering $\prec$ such that $j\prec i$
    whenever $j\in Y_i$ and there is a half-trek from $i$ to $j$, then
    $\phi_G$ is generically injective and algebraically one-to-one.
  \item[(ii)] For $\phi_G$ to be generically finite-to-one it is
    necessary that there exists a family of subsets $Y_i\subseteq V$,
    $i\in V$, such that $Y_i$ satisfies the half-trek criterion with
    respect to $i$ and $j\in Y_i$ implies $i\not \in Y_j$.
  \end{enumerate}
\end{theorem}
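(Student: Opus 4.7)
The plan is to base both parts on the reformulation given by Lemma~\ref{lem:gen-id-easier-equations}, which replaces the study of the fiber $\mathcal{F}_G(\Lambda,\Omega)$ by the bilinear system
\[
  \left[(I-\Lambda)^T\Sigma(I-\Lambda)\right]_{yi}\;=\;0,\qquad y\not=i,\;y\bi i\notin B,
\]
in the unknown $\Lambda$, with $\Sigma=\phi_G(\Lambda_0,\Omega_0)$ a generic covariance matrix. Expanding this expression, the equations indexed by a set of nodes $Y_i\subseteq V$ with $|Y_i|=|\pa(i)|$ (and none of whose elements is $i$ or a bidirected neighbour of $i$) can be rewritten, for each fixed $i$, as a linear system in the column $\Lambda_{\pa(i),i}$ whose coefficient matrix is of the form $M^i(\Lambda,\Sigma)\in\mathbb{R}^{Y_i\times\pa(i)}$ and whose right-hand side is a vector depending on $\Sigma$ and on other entries of $\Lambda$.

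For part~(i), I would use the postulated total ordering $\prec$ to solve these linear systems one column at a time. The key point is that by the ordering condition, any entry $\lambda_{kl}$ appearing in $M^i$ or in the right-hand side of the $i$-th system satisfies $l\prec i$ (or corresponds to a half-trek situation compatible with $\prec$), and has therefore already been determined when we reach column $i$. It then suffices to show that $\det M^i$, viewed as a polynomial in $\Sigma$ and in the coefficients of edges with head $\prec i$, does not vanish identically. Here I would apply the determinantal trek version of the Lindström--Gessel--Viennot lemma: $\det M^i$ equals a signed sum of monomials indexed by systems of half-treks from $Y_i$ to $\pa(i)$ with no sided intersection. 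The hypothesis that at least one such system $\Pi$ exists guarantees the presence of a non-cancelling monomial, so $\det M^i\not\equiv 0$. Hence on a Zariski-open dense subset each $M^i$ is invertible, the columns of $\Lambda$ are determined uniquely and rationally in the previous ones, and $\phi_G$ is generically injective and algebraically one-to-one.

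For part~(ii), I would argue via the Jacobian: by Question~\ref{qu:rank}, generic finite-to-oneness is equivalent to the Jacobian of $\phi_G$ having full column rank at a generic point. A computation following the trek rule shows that the Jacobian decomposes into blocks indexed by the nodes $i\in V$, and that the columns corresponding to the parameters $\{\lambda_{ki}:k\in\pa(i)\}$ are linearly independent modulo the contributions of the other nodes precisely when some set $Y_i\subseteq V$ satisfying the half-trek criterion with respect to $i$ exists (again by a trek-determinantal argument). Requiring all these rank conditions simultaneously forces a global compatibility, captured by the constraint that whenever $j\in Y_i$ one must have $i\notin Y_j$; otherwise the rank contributions attributed to the edges into $i$ and into $j$ would overlap, contradicting full column rank of the Jacobian.

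The main obstacle is the determinantal identity that expresses $\det M^i$ (and the analogous Jacobian minors in part~(ii)) as a signed enumeration of half-trek systems with no sided intersection; proving it requires a careful cancellation argument of Lindström--Gessel--Viennot type adapted to treks, together with book-keeping of the top nodes and bidirected edges that decorate each trek. Once this determinantal formula is in hand, both the sufficiency in (i) and the necessity in (ii) follow by the rank and solvability arguments outlined above.
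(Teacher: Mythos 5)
Your proposal follows essentially the same route as the paper's own (outline of) proof: part (i) reduces to the bilinear system of Lemma~\ref{lem:gen-id-easier-equations}, solves for the columns $\Lambda_{\pa(i),i}$ recursively in the order $\prec$, and establishes generic invertibility of the coefficient matrix $A_i(\Lambda,\Sigma)=\left[(I-\Lambda)^T\Sigma\right]_{Y_i,\pa(i)}$ by a Lindstr\"om--Gessel--Viennot-type argument from the half-trek system without sided intersection, while part (ii) argues via column-rank of the Jacobian with the blocks $J_i$ and the observation that $j\in Y_i$ and $i\in Y_j$ would force reuse of the row for the non-edge $i\bi j$. Like the paper, you leave the trek-determinantal cancellation identity as the deferred technical core (the paper delegates it to the cited reference), so the proposal is a faithful match rather than a new approach.
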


We merely outline the proof of the theorem; for details see
\cite{foygel:draisma:drton:2012}.  Some of the arguments are further
illustrated in Example~\ref{ex:htc-ideas}.  Note also that
Theorem~\ref{thm:simple-gen-id} is obtained from
Theorem~\ref{thm:htc-id}(i) by taking $Y_i=\pa(i)$ and $\prec$ as a
topological order.

\begin{proof}[Outline of proof of Theorem~\ref{thm:htc-id}]
  (i) Let $\Sigma=\phi_G(\Lambda_0,\Omega_0)$ for
  $\Lambda_0\in\mathbb{R}^D_\mathrm{reg}$ and
  $\Omega_0\in\mathit{PD}(B)$.  Suppose
  $(\Lambda,\Omega)\in\mathcal{F}_G(\Lambda_0,\Omega_0)$.  To show
  that $(\Lambda,\Omega)=(\Lambda_0,\Omega_0)$, we visit the nodes
  $i\in V$ from smallest to largest in the order $\prec$ and
  iteratively find a linear equation system that is uniquely solved
  by the $i$-th column of $\Lambda$.  The starting point is
  Lemma~\ref{lem:gen-id-easier-equations}, by which we have
  \begin{equation}
    \label{eq:htc-equations}
    \left[(I-\Lambda)^T\Sigma(I-\Lambda)\right]_{Y_i,i} = 0,  \quad
  i\in V.
  \end{equation}
  This is true because Definition~\ref{def:htc} yields that $j\not=i$
  and $j\bi i\notin B$ when $j\in Y_i$.  Similar to the proof of
  Theorem~\ref{thm:simple-gen-id}, we may
  rearrange~(\ref{eq:htc-equations}) to 
  \[
    A_i(\Lambda,\Sigma) \Lambda_{\pa(i),i} \;=\; b_i(\Lambda,\Sigma),
  \]
  with
  $A_i(\Lambda,\Sigma)=\left[(I-\Lambda)^T\Sigma\right]_{Y_i,\pa(i)}$
  and $b_i(\Lambda,\Sigma)= \left[(I-\Lambda)^T\Sigma\right]_{Y_i,i}$.
  Both $A_i(\Lambda,\Sigma)$ and $b_i(\Lambda,\Sigma)$ can be shown to
  only depend on those columns of $\Lambda$ that are indexed by nodes
  $j$ with a half-trek from $i$ to $j$.  Hence, the proof is complete
  if we can show that $A_i(\Lambda_0,\Sigma)$ is invertible for
  generic choices of $\Lambda_0$ and $\Omega_0$.  To verify this, we
  may use the existence of a half-trek system without sided
  intersection from $Y_i$ to $\pa(i)$ to argue that the determinant of
  $A_i(\Lambda_0,\Sigma)$ is not the zero polynomial.  This last step
  is in the spirit of the Lindstr\"om-Gessel-Viennot lemma.
  
  (ii) We study the Jacobian of the equations from
  Lemma~\ref{lem:gen-id-easier-equations}.  Its rows are indexed by
  the non-edges of the bidirected part $(V,B)$ and its columns by the
  edges in $D$.  For $\phi_G$ to be generically finite-to-one, it is
  necessary that the Jacobian has full column rank $D$.  It can be
  shown that the Jacobian contains an invertible $|D|\times |D|$
  submatrix only if the given condition holds.  Let $J_i$ be the
  submatrix of the Jacobian obtained by selecting the columns
  corresponding to directed edges with head at $i$.  Then, more specifically,
  $J_i$ has full column rank only if there exists a subset
  $Y_i\subseteq V$ that satisfies the half-trek criterion with respect
  to $i$.  Moreover, if $j\in Y_i$ and $i \in Y_j$, then the same row,
  namely, that corresponding to $i\bi j\notin B$, would be used to get
  an invertible square submatrix of $J_i$ and $J_j$.
\end{proof}

The conditions from Theorem~\ref{thm:htc-id} seem involved but as we
mentioned they can be checked in polynomial time.  Indeed, the problem
of finding suitable sets $Y_i$ that satisfy the half-trek criteria can
be shown to correspond to solving network-flow problems.  For
condition (i), we repeatedly solve a network-flow problem.  Condition
(ii) can be implemented as a single larger network-flow problem
\cite[Section 6]{foygel:draisma:drton:2012}.

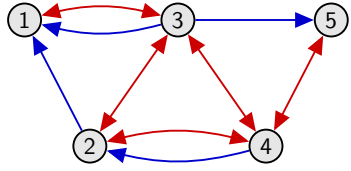
\begin{figure}[t]
  \centering
    \scalebox{0.9}{
    \begin{tikzpicture}[->,>=triangle 45, shorten >=0pt,
      auto,thick, main node/.style={circle,inner
        sep=2pt,fill=gray!20,draw,font=\sffamily}]
      
      \node[main node] (1) {1}; 
      \node[main node] (3) [right=1.75cm of 1] {3}; 
      \node[main node] (5) [right=1.75cm of 3] {5}; 
      \node[main node] (2) [below right=1.5cm and 0.6cm of 1] {2};
      \node[main node] (4) [below left=1.5cm and 0.6cm of 5] {4};
      
      \path[color=black!20!blue] 
      (2) edge node {} (1) 
      (3) edge node {} (5);

      \path[color=black!20!blue, bend left=15]
      (4) edge node {} (2)
      (3) edge node {} (1);

      \path[color=black!20!red,<->, bend left=15] 
      (2) edge node {} (4)
      (1) edge node {} (3);
      \path[color=black!20!red,<->] 
      (2) edge node {} (3)
      (4) edge node {} (3)
      (4) edge node {} (5);
    \end{tikzpicture}
  }
  \caption{Illustration of Theorem~\ref{thm:htc-id}.}
  \label{fig:htc-id}
\end{figure}

\begin{example}
  \label{ex:htc-ideas}
  Let $G$ be the graph in Figure~\ref{fig:htc-id}.   Each one of the sets
  \begin{align*}
    Y_1&=\{2,5\}, & Y_2&=\{5\}, & Y_3&=\emptyset, & Y_4&=\emptyset, & Y_5&=\{3\}
  \end{align*}
  satisfies the half-trek criterion with respect to the node it is
  indexed by.  This is least evident for $Y_1$, and we highlight the
  half-treks $2 \bi 3$ and $5\bi 4\to 2$, which have no sided
  intersection.  Choosing the ordering as
  $3\prec 4 \prec 5 \prec 1\prec 2$, Theorem~\ref{thm:htc-id}(i)
  shows that $\phi_G$ is algebraically one-to-one.  Other possible
  orderings are obtained by permuting $\{3,4,5\}$ or $\{1,2\}$.

  To illustrate ideas from the proof of Theorem~\ref{thm:htc-id}(i),
  we focus on node 1, with $\pa(1)=\{2,3\}$.  Since $Y_1=\{2,5\}$, we
  work with the equations
  \begin{align*}
    \left[(I-\Lambda)^T\Sigma(I-\Lambda)\right]_{51}&\;=\;0, &
    \left[(I-\Lambda)^T\Sigma(I-\Lambda)\right]_{21}&\;=\;0.
  \end{align*}
  Expanding out the matrix product, the equations become
  \begin{align}
    \label{eq:ex-htc-id-51}
    \sigma_{51}-\left(
    \sigma_{52}{\lambda_{21}}+
    \sigma_{53}{\lambda_{31}}\right)-\lambda_{35}\sigma_{31}+\left(
    \lambda_{35}\sigma_{32}{\lambda_{21}}+
    \lambda_{35}\sigma_{33}{\lambda_{31}}\right)&\;=\;0,\\
    \label{eq:ex-htc-id-21}
    \sigma_{21}-\left(
    \sigma_{22}{\lambda_{21}}+
    \sigma_{23}{\lambda_{31}}\right)-
    \lambda_{42}\sigma_{41}+\left(
    \lambda_{42}\sigma_{42}{\lambda_{21}}+
    \lambda_{42}\sigma_{43}{\lambda_{31}}\right)&\;=\;0,
  \end{align}
  and we wish to solve for $\lambda_{21}$ and $\lambda_{31}$.  With
  $5\prec 1$, we have already solved for $\lambda_{35}$; since
  $Y_5=\pa(5)$ we have $\lambda_{35}=\sigma_{35}/\sigma_{55}$ as is
  also clear from the discussion after Proof B for
  Theorem~\ref{thm:glob-id-dags}.  Substituting the ratio for
  $\lambda_{35}$ turns~(\ref{eq:ex-htc-id-51}) into a linear equation
  in $\lambda_{21}$ and $\lambda_{31}$.  The equation
  in~(\ref{eq:ex-htc-id-21}) could be linearized similarly, except
  that now the relevant coefficient $\lambda_{42}$ has not yet been
  determined in an ordering with $1\prec 2$.  However, if $\Lambda$ is
  part of a pair $(\Lambda,\Omega)$ in the fiber given by $\Sigma$,
  then
  \[
    -
    \lambda_{42}\sigma_{41}+\left(
    \lambda_{42}\sigma_{42}{\lambda_{21}}+
    \lambda_{42}\sigma_{43}{\lambda_{31}}\right)\;=\;0
  \]
  because there is no half-trek from $1$ to $2$.  To see this note
  that the term $\lambda_{42}\sigma_{41}$ corresponds to treks from
  $2$ to $1$ that start with the edge $2\leftarrow 4$, whereas the sum
  $\lambda_{42}\sigma_{42}{\lambda_{21}}+
  \lambda_{42}\sigma_{43}{\lambda_{31}}$ corresponds to
  treks from $2$ to $1$ that start with the edge $2\leftarrow 4$ and
  end in either $2\to 1$ or $3\to 1$.  These two sets of treks
  coincide when there is no half-trek from $1$ to $2$.  
\end{example}

The sufficient condition from Theorem~\ref{thm:htc-id}(i) can be
strengthened by first applying the graph decomposition from
Section~\ref{sec:graph-decomposition} and then check condition (i) in
each subgraph.  No such strengthening is possible for the necessary
condition from part (ii) of the theorem
\cite{foygel:draisma:drton:2012}.  Further strengthening of the
sufficient condition is possible by first removing sink nodes from the
graph using the observation from
Theorem~\ref{thm:ancestral-subgraphs}.  When a sink node is removed a
more refined graph decomposition may become possible; we refer the
reader to \cite{chen:2015,drton:weihs:2015}.  While a specific
polynomial-time algorithm using this idea is given in
\cite{drton:weihs:2015}, it is still unclear how to best design
algorithms based on recursive graph decomposition and removal of sink
nodes.

We conclude our discussion of parameter identification with two
examples from the exhaustive computational study of graphs with up to
5 nodes in \cite{foygel:draisma:drton:2012}.  Both graphs in
Figure~\ref{fig:htc-id} satisfy the necessary condition in
Theorem~\ref{thm:htc-id}(ii) and, thus, have $\phi_G$ generically
finite-to-one.  Neither graph satisfies the sufficient condition from
Theorem~\ref{thm:htc-id}(i).  The graph in panel (a) indeed does not
have $\phi_G$ generically injective.  Instead, $\phi_G$ is
algebraically 3-to-one.  The graph in panel (b), however, is
algebraically one-to-one but Theorem~\ref{thm:htc-id}(i) fails to
recognize it.  Decomposition and removal of sink nodes do not help.


\begin{figure}[t]\centering
  \begin{tabular}{p{6.25cm}@{\hspace{1.75cm}}p{3.5cm}}
    (a)
      \scalebox{0.9}{
      \begin{tikzpicture}[->,>=triangle 45, shorten >=0pt,
        auto,thick, main node/.style={circle,inner
          sep=2pt,fill=gray!20,draw,font=\sffamily}]
        
        \node[main node,rounded corners] (1) {1}; \node[main
          node,rounded corners] (2) [right=1.5cm of 1] {2}; \node[main
          node,rounded corners] (3) [below right=.5cm and 1.5cm of 2]
          {3}; \node[main node,rounded corners] (4) [below right=.5cm
          and 1.5cm of 3] {4}; \node[main node,rounded corners] (5)
          [above right=1cm and 1.5cm of 2] {5};
          
          \path[color=black!20!blue,every
          node/.style={font=\sffamily\small}] (1) edge node {} (2) (2)
          edge node {} (3) (3) edge node {} (4) (1) edge node {} (5)
          (2) edge node {} (5) ; \path[color=black!20!blue,every
          node/.style={font=\sffamily\small},bend right] (1) edge node
          {} (4); \path[color=black!20!red,<->,every
          node/.style={font=\sffamily\small},bend right] (1) edge node
          {} (2) (1) edge node {} (3) ;
          \path[color=black!20!red,<->,every
          node/.style={font=\sffamily\small},bend left] (1) edge node
          {} (5) (2) edge node {} (4);
        \end{tikzpicture}
      }
    & 
      (b)
    \scalebox{0.9}{
    \begin{tikzpicture}[->,>=triangle 45, shorten >=0pt,
      auto,thick, main node/.style={circle,inner
        sep=2pt,fill=gray!20,draw,font=\sffamily}]
      
      \node[main node,rounded corners] (1) {1}; \node[main
      node,rounded corners] (2) [below=1.5cm of 1] {2}; \node[main
      node,rounded corners] (3) [right=1.5cm of 2] {3}; \node[main
      node,rounded corners] (4) [right=1.5cm of 1] {4}; \node[main
      node,rounded corners] (5) [above right=1cm and .6cm of 1]
      {5};
      
      \path[color=black!20!blue,every
      node/.style={font=\sffamily\small}] (1) edge node {} (2) (1)
      edge node {} (3) (1) edge node {} (4) (4) edge node {} (5) ;
      \path[color=black!20!red,<->,every
      node/.style={font=\sffamily\small},bend right] (1) edge node
      {} (2) (1) edge node {} (3) (1) edge node {} (4);
      \path[color=black!20!red,<->,every
      node/.style={font=\sffamily\small}] (1) edge node {} (5);
    \end{tikzpicture}
    }
  \end{tabular}
  \caption{Graphs that satisfy the necessary but not the
    sufficient condition from Theorem~\ref{thm:htc-id}: (a) $\phi_G$
    is algebraically 3-to-one, (b) $\phi_G$ is algebraically
    one-to-one and, thus, generically injective.}
  \label{fig:htc-gap}
\end{figure}
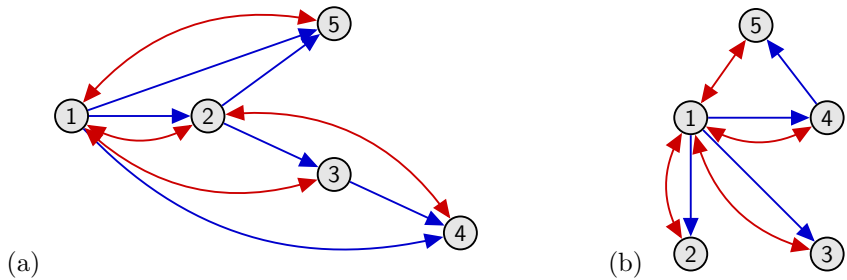

\addtocontents{toc}{\vspace{-.25cm}}{}{}
\part{Relations Among Covariances}
\addtocontents{toc}{\vspace{.02cm}}{}{}

\section{Implicitization}
\label{sec:implicitization}

Let $\mathcal{M}_G$ be the set of covariance matrices of the
structural equation model given by a mixed graph $G=(V,D,B)$.
Motivated, in particular, by the covariance equivalence problem from
Question~\ref{qu:model-equivalence}, we now discuss polynomial
relations among the entries of the matrices in $\mathcal{M}_G$.  Let
$\Sigma=(\sigma_{ij})$ be a symmetric $V\times V$ matrix of variables,
and let $\mathbb{R}[\Sigma]$ be the ring of polynomials in the
$\sigma_{ij}$ with real coefficients.  Then the polynomial relations
we seek to understand make up the \emph{vanishing ideal}
\[
  \mathcal{I}(G) \;=\: \left\{
    f\in \mathbb{R}[{\Sigma}] \::\:
    f({\Sigma}) = 0\,\;\text{for all}\;\, 
    {\Sigma}\in\mathcal{M}_G 
  \right\}.      
\]
Since $\mathcal{M}_G$ is the image of the rational map $\phi_G$, we
may compute a generating set for $\mathcal{I}(G)$ by implicitization.
Assuming for simplicity that $G$ is acyclic and, thus, $\phi_G$
polynomial, we have
\[
  \mathcal{I}(G) \;=\; \langle\; {\Sigma}-\phi_G({\Lambda},{\Omega}) \;\rangle
  \;\cap\; \mathbb{R}[{\Sigma}].
\]
A better approach, however, is to start with the equations from
Lemma~\ref{lem:gen-id-easier-equations}, which have $\Omega$ already
eliminated.  We compute
\[
  \mathcal{I}(G) \;=\; \left\langle
    \left[(I-{\Lambda})^\trans{\Sigma}(I-{\Lambda}) 
    \right]_{ij}\::\:\; i \neq j,\; i \bi j \notin B \right\rangle
  \;\cap\; \mathbb{R}[{\Sigma}].
\]
If $G$ is not acyclic, we saturate with respect to
$\det(I-{\Lambda})$ before intersecting with
$\mathbb{R}[{\Sigma}]$; compare the examples in
Section~\ref{sec:gener-ident}.

\begin{figure}[t]
  \centering
    \scalebox{0.9}{
      \begin{tikzpicture}[->,>=triangle 45, shorten >=0pt, auto,thick,
        main node/.style={circle,inner
          sep=2pt,fill=gray!20,draw,font=\sffamily}]
        
        \node[main node] (1) {1}; \node[main node] (2) [below
        right=0.5cm and 1.25cm of 1] {2}; \node[main node] (3) [right=2.5cm of 1] {3}; \node[main node] (4) [below
        right=0.5cm and 1.25cm of 3] {4};
        
        \path[color=black!20!blue,every
        node/.style={font=\sffamily\small}] (1) edge node {} (2) (1)
        edge node {} (3) (2) edge node {} (4) (3) edge node {} (4);
      \end{tikzpicture}
    }
  \caption{An acyclic digraph.}
  \label{fig:dag-implicitize}
\end{figure}
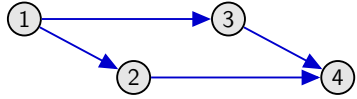

\begin{example}
  \label{ex:dag-implicitize}
  To illustrate the use of a different software, we change to {\sc
    Macaulay2} \cite{M2}.  The following code computes a generating
  set for the vanishing ideal of the graph $G$ shown in
  Figure~\ref{fig:dag-implicitize}:
  \begin{small}
\begin{verbatim}
R = QQ[l12,l13,l24,l34, s11,s12,s13,s14, s22,s23,s24, s33,s34, s44, 
       MonomialOrder => Eliminate 4];
Lambda = matrix{{1, -l12, -l13, 0},
                {0, 1, 0, -l24},
                {0, 0, 1, -l34},
                {0, 0, 0, 1}};
S = matrix{{s11, s12, s13, s14},
           {s12, s22, s23, s24},
           {s13, s23, s33, s34},
           {s14, s24, s34, s44}};
W = transpose(Lambda)*S*Lambda;
I = ideal{W_(0,1),W_(0,2),W_(0,3),W_(1,2),W_(1,3),W_(2,3)};
eliminate({l12,l13,l24,l34},I)
\end{verbatim}
  \end{small}
  The `GraphicalModels' package \cite{MR3073716} automates the computation:
  \begin{small}
\begin{verbatim}
needsPackage "GraphicalModels";
G = digraph {{1,{2,3}},{2,{4}},{3,{4}}};
R = gaussianRing G;
gaussianVanishingIdeal R
\end{verbatim}
  \end{small}
  Reproduced in our notation, the output shows that the ideal
  $\mathcal{I}(G)$ is generated by the two polynomials
  \begin{align}
    \label{eq:dag-generator1}
    f_1&=\sigma_{12}\sigma_{13}-\sigma_{11}\sigma_{23}, \\
        \label{eq:dag-generator2}
    f_2&=\sigma_{14}\sigma_{23}^2-\sigma_{13}\sigma_{23}\sigma_{24}-\sigma_{14}\sigma_{22}\sigma_{33}+\sigma_{12}\sigma_{24}\sigma_{33}+\sigma_{13}\sigma_{22}\sigma_{34}-\sigma_{12}\sigma_{23}\sigma_{34}.
  \end{align}
\end{example}

Computing $\mathcal{I}(G)$ using Gr\"obner bases is a method that
applies to any mixed graph but can be computationally prohibitive for
graphs with more than 5 or 6 nodes.  In order to solve model
equivalence problems combinatorial insight on
particular types of relations is needed.

\begin{example}
  Continuing with Example~\ref{ex:dag-implicitize}, observe that the
  two polynomials $f_1$ and $f_2$ from~(\ref{eq:dag-generator1})
  and~(\ref{eq:dag-generator2}) are determinants of submatrices of the
  covariance matrix $\Sigma$.  The following two displays
  locate the submatrices
  \begin{equation*}
    \begin{tikzpicture}
      \matrix [inner sep=1.5pt,matrix of math nodes,left delimiter={[},right
      delimiter={]},row sep=1ex,column sep=1ex,color=lightgray] (m) { {\color{black}\sigma_{11}} &
        \sigma_{12} & {\color{black}\sigma_{13}} & \sigma_{14}\\
        {\color{black}\sigma_{12}} & \sigma_{22} & {\color{black}\sigma_{23}} & \sigma_{24}\\
        \sigma_{13} & \sigma_{23} & \sigma_{33} & \sigma_{34}\\
        \sigma_{14} & \sigma_{24} & \sigma_{34} & \sigma_{44}\\
      }; 
      { \draw[color=black] (m-1-1.north west) -- (m-1-1.north east)
        -- (m-1-1.south east) -- (m-1-1.south west) -- (m-1-1.north
        west); }
    \end{tikzpicture}
    \qquad    \qquad
    \begin{tikzpicture}
      \matrix [inner sep=1.5pt,matrix of math nodes,left delimiter={[},right
      delimiter={]},row sep=1ex,column sep=1ex,color=lightgray] (m) {
        \sigma_{11} & {\color{black}\sigma_{12}} & {\color{black}\sigma_{13}} & {\color{black}\sigma_{14}}\\
        \sigma_{12} & {\color{black}\sigma_{22}} & {\color{black}\sigma_{23}} & {\color{black}\sigma_{24}}\\
        \sigma_{13} & {\color{black}\sigma_{23}} & {\color{black}\sigma_{33}} & {\color{black}\sigma_{34}}\\
        \sigma_{14} & \sigma_{24} & \sigma_{34} & \sigma_{44}\\
      }; { \draw[color=black] (m-2-2.north west) -- (m-2-3.north east)
        -- (m-3-3.south east) -- (m-3-2.south west) -- (m-2-2.north
        west); }
    \end{tikzpicture}
  \end{equation*}
  and show two boxes to visualize that each submatrix is obtained by
  bordering a principal submatrix by one additional row and column.
  The determinants $f_1$ and $f_2$ are seen to be almost principal
  minors of $\Sigma$.  As we discuss in Section~\ref{sec:cond-indep},
  the vanishing of almost principal minors of a Gaussian covariance
  matrix has a particular probabilistic meaning, namely, conditional
  independence
  \cite{MR3183760}.
\end{example}

\section{Conditional Independence}
\label{sec:cond-indep}

Suppose $X=(X_i:i\in V)$ is a Gaussian random vector indexed by a
finite set $V$ and with covariance matrix $\Sigma\in\mathit{PD}_V$.
Let $i,j\in V$ be two distinct indices, and let
$S\subseteq V\setminus\{i,j\}$.  The random variables $X_i$ and $X_j$
are conditionally independent given $X_S$ if and only if
$\det(\Sigma_{iS\times jS})=0$.  This connection between Gaussian
conditional independence and the vanishing of almost principal minors
of the covariance matrix is explained in detail in \cite[Chapter
3]{oberwolfach} and \cite{MR2362722}.

It is fully understood which conditional independence relations hold
in the covariance matrices of a linear structural equation model.  The
following concepts are needed to state the result.  Let $\pi$ be a
semi-walk from node $i$ to node $j$ in the considered mixed graph
$G=(V,D,B)$, and let node $k$ be a non-endpoint of $\pi$.  A
\emph{collider} on $\pi$ is a node $k$ that is an internal node on
$\pi$, and a head on the two edges that precede and succeed $k$ on
$\pi$.  We recall our convention that the two nodes incident to a
bidirected edge are both heads.  Pictorially, $\pi$ includes the
segment $\to k \leftarrow$, $\to k\bi$, $\bi k\leftarrow$ or
$\bi k\bi$.  A \emph{non-collider} on $\pi$ is an internal node of
$\pi$ that is not a collider on $\pi$.

\begin{definition}
  \label{def:dsep}
  Fix a set $S\subseteq V$.  Two nodes $i,j\in V$ are
  \emph{d-connected} by $S$ if $G$ contains a semi-walk from $i$ to
  $j$ that has all colliders in $S$ and all non-colliders outside $S$.
  If no such semi-walk exists, then $i$ and $j$ are \emph{d-separated}
  by $S$.
\end{definition}

The following theorem was first proven for acyclic digraphs
\cite{MR1064736} and later extended to cover arbitrary mixed graphs
\cite{smr:98}.

\begin{theorem}
  \label{thm:dsep}
  Let $G=(V,D,B)$ be a mixed graph.  Two nodes $i,j\in V$ are
  d-separated by $S$ if and only if
  $\det\left( \phi_G(\Lambda,\Omega)_{iS\times jS} \right)=0$
  for all $\Lambda\in\mathbb{R}^D_\mathrm{reg}$,
  $\Omega\in\mathit{PD}(B)$.
\end{theorem}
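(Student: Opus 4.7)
The plan is to connect the determinantal vanishing to a combinatorial condition on trek systems, then identify that condition with d-separation. By the trek rule of Section~\ref{sec:trek-rule}, each entry $\phi_G(\Lambda,\Omega)_{ab}$ expands as a (formal) sum over $\mathcal{T}(a,b)$. Applying Leibniz to $\det(\phi_G(\Lambda,\Omega)_{iS\times jS})$ yields a signed sum of products of trek monomials, organized as systems $\Pi:\{i\}\cup S\rightrightarrows\{j\}\cup S$ under a bijective pairing. A Lindstr\"om--Gessel--Viennot-style sign-reversing involution---pairing up systems whose treks share a first sided intersection by swapping the tails at that intersection---cancels all contributions from systems with any sided intersection. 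Therefore the determinant vanishes as a rational identity in $(\Lambda,\Omega)$ if and only if no trek system from $\{i\}\cup S$ to $\{j\}\cup S$ without sided intersection exists.

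For the direction ``d-separation $\Rightarrow$ vanishing'', I would assume $i$ and $j$ are d-separated by $S$ and argue by contradiction: given a trek system $\Pi$ without sided intersection, I would concatenate the distinguished trek incident to $i$ with the reverses of treks incident to $S$-nodes (following the bijective pairing among elements of $S$) to extract a semi-walk from $i$ to $j$ whose colliders lie in $S$ and whose non-colliders lie outside $S$, contradicting d-separation. The bookkeeping of ``top nodes'' and of left/right sides ensures that colliders appear exactly at the junctions where distinct treks meet and non-colliders appear exactly within the interior of each individual trek.

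For the direction ``vanishing $\Rightarrow$ d-separation'', I would argue contrapositively. Taking a d-connecting semi-walk $\pi$ from $i$ to $j$ relative to $S$, I would cut $\pi$ at its colliders (all of which lie in $S$) to produce a trek from $i$ to some node and treks emanating from each collider; together with trivial treks at the remaining nodes of $S$, these form a system $\Pi:\{i\}\cup S\rightrightarrows\{j\}\cup S$ without sided intersection. To show the determinant is not identically zero, I would specialize $(\Lambda,\Omega)$ so that the monomial $\Pi(\Lambda,\Omega)$ is the unique ``dominant'' contribution---e.g., by assigning moderate weights to the edges of $\pi$ and scaling all other parameters toward zero---so the determinant is a nonzero quantity.

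The main obstacle is making the sign-reversing involution rigorous when $G$ contains directed cycles, since then the trek rule yields a formal power series rather than a polynomial and the determinant is not a priori a polynomial identity. I would address this either by first proving the identity inside the ring of formal power series $\mathbb{R}[[\Lambda,\Omega]]$ and then specializing, or by restricting to the open set where the spectral condition on $\Lambda$ ensures convergence and invoking the identity principle for rational functions. A secondary subtlety is guaranteeing that the trek system built from a d-connecting walk genuinely has no sided intersection; this requires choosing $\pi$ minimally with respect to an appropriate partial order and performing careful case analysis at each collider to handle both directed and bidirected incidences.
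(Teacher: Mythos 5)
Your outline is essentially a direct, self-contained proof, and it takes a genuinely different route from the paper.  The paper does not prove Theorem~\ref{thm:dsep} from scratch: it cites the classical references and treats the statement as the special case $A=\{i\}\cup S$, $C=\{j\}\cup S$, $r=|S|$ of the trek-separation theorem (Theorem~\ref{thm:trek-separation}), whose proof outline first reduces to a digraph by subdividing bidirected edges, expands $\det(\Sigma_{A\times C})$ by Cauchy--Binet, applies the Lindstr\"om--Gessel--Viennot lemma to the path matrices $[(I-\Lambda)^{-1}]_{E\times A}$ and $[(I-\Lambda)^{-1}]_{E\times C}$, and finally converts ``every trek system has a sided intersection'' into the existence of a trek-separating pair $(S_A,S_C)$ with $|S_A|+|S_C|\le |S|$ via Menger's theorem.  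You instead run an LGV-style sign-reversing involution directly on trek systems $\{i\}\cup S\rightrightarrows\{j\}\cup S$ coming from the trek rule, and you replace the Menger step by a direct walk-surgery equivalence between d-separation by $S$ and the nonexistence of a sided-intersection-free system of size $|S|+1$.  What this buys is an argument that avoids the bidirected-edge subdivision, Cauchy--Binet, and Menger; what it costs is that the combinatorial bookkeeping you defer to ``careful case analysis'' (splicing a minimal d-connecting walk to remove same-side revisits and repeated colliders, checking that switch points are colliders because a target in $S$ cannot be a top in a sided-intersection-free system, and the formal power series treatment for cyclic graphs) is exactly the content the paper outsources to \cite{sullivant:2010,MR3044565}.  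Those steps do go through, and your handling of the cyclic case (formal series or the spectral-radius region plus the identity principle for rational functions) is fine.

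One step is thinner than the rest and needs a real argument: the completeness direction of your first equivalence.  The involution only shows that the determinant equals a \emph{signed} sum over sided-intersection-free systems; in a general mixed graph these surviving systems can pair sources to targets by permutations of different signs while using the same edge multiset, so they may still cancel.  Hence ``a sided-intersection-free system exists $\Rightarrow$ the determinant is not identically zero'' is not automatic, and your proposed fix---scaling all parameters off the distinguished system toward zero---does not by itself isolate its monomial, because other sided-intersection-free systems supported on the same edges survive that specialization.  You need an additional argument here, for instance restricting to the subgraph spanned by the distinguished system and showing the relevant minor is a nonzero polynomial there by a leading-term/term-order or explicit numerical-weight computation; this is precisely the control that the Cauchy--Binet/LGV factorization provides in the paper's route through Theorem~\ref{thm:trek-separation}.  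A second, minor point: to make the involution well defined you must swap tails on one fixed side at a canonically chosen intersection vertex and check the choice is preserved under the swap; this is routine but should be stated explicitly.
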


For acyclic digraphs, the theorem can be derived in a probabilistic
setup that extends to non-Gaussian conditional independence
\cite{lauritzen:1996}.  When lecturing about the result, the author
likes to discuss the example with three binary variables shown in
Figure~\ref{fig:boring-talk}.  In our linear Gaussian setting,
Theorem~\ref{thm:dsep} is a special case of
Theorem~\ref{thm:trek-separation} that we treat in more detail.

Define the \emph{conditional independence ideal}
\[
  \mathcal{I}_\text{CI}(G) \;=\; \langle \;\det\left({
      \Sigma}_{iS\times jS}\right) \::\: i,j \ \text{d-separated by }
  S\;\rangle.
\]
By Theorem~\ref{thm:dsep},
$\mathcal{I}_\text{CI}(G)\subseteq \mathcal{I}(G)$ for any mixed graph
$G$.  In Example~\ref{ex:dag-implicitize},
$\mathcal{I}_\text{CI}(G)= \mathcal{I}(G)$ but this may be false even
for acyclic digraphs \cite{MR2412156}.  Nevertheless, if $G$ is an
acylic digraph, then the set of covariance matrices $\mathcal{M}_G$ is
cut out by conditional independence relations.  In other words, for an
acyclic digraph,
\begin{equation}
  \label{eq:dag-model-cut-out}
  \mathcal{M}_G \;=\; V\left(\mathcal{I}_\text{CI}(G) \right) \cap \mathit{PD}_V
\end{equation}
is the positive definite part of the variety of the conditional
independence ideal.  The equality in~(\ref{eq:dag-model-cut-out}) also
holds when $G$ is an ancestral graph, as defined in
Section~\ref{sec:glob-ident}.  However, it is false in general as can
be seen in Example~\ref{ex:two-ivs}.  We remark that for acyclic
digraphs it has been proven that saturating the conditional
independence with respect to all principal minors yields the
vanishing ideal \cite{6875189}:
\[
  \mathcal{I}(G) \quad=\quad \mathcal{I}^\text{CI}(G)\::\:
  \bigg(\prod_{A\subset V} \det({
    \Sigma}_{A\times A})\bigg)^\infty.
\]
  

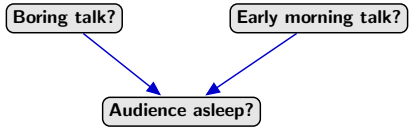
\begin{figure}[t]
  \begin{center}
    \scalebox{0.7}{
      \begin{tikzpicture}[->,>=triangle 45,shorten >=1pt, auto,thick,
        main
        node/.style={rectangle,fill=gray!20,draw,font=\sffamily\bfseries}]
        \node[main node,rounded corners] (2a) 
        {Audience asleep?}; \node[main node,rounded corners] (1a)
        [above left=1.2cm and -.4cm of 2a] {Boring talk?}; \node[main
        node,rounded corners] (3a) [above right=1.2cm and -.6cm of 2a]
        {Early morning talk?}; \path[color=black!20!blue,every
        node/.style={font=\sffamily\small}] (1a) edge node {} (2a)
        (3a) edge node {} (2a);
      \end{tikzpicture}
    }
  \end{center}
  \caption{A graph for three binary variables.}
  \label{fig:boring-talk}
\end{figure}

For acyclic digraphs and more generally ancestral graphs, the equality
from (\ref{eq:dag-model-cut-out}) allows us to answer
Question~\ref{qu:model-equivalence} on covariance equivalence by
checking whether two graphs have the same d-separation
relations.  The latter comparison can be accomplished in polynomial
time.  We state the result for acyclic digraphs
\cite{MR1096723,VermaPearl90}.  A generalization for ancestral graphs
was given more recently \cite{ali:2009}; see also \cite{zhao:2005}.

\begin{theorem}
  \label{thm:dag-equiv}
  Let $G$ and $G'$ be two acyclic digraphs.  Then
  $\mathcal{M}_G=\mathcal{M}_{G'}$ if and only if $G$ and $G'$ have
  the same adjacencies and the same unshielded colliders.  An
  unshielded collider is an induced subgraph of the form $i\to
  j\otblue k$. 
\end{theorem}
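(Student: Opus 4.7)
The plan is to reduce the question to comparing the families of d-separation relations of $G$ and $G'$, and then argue combinatorially that these families are determined by the skeleton together with the unshielded colliders. Concretely, I would proceed in three steps.

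\emph{Step 1 (Reduction to d-separation).} Because $G$ and $G'$ are acyclic digraphs, equation~(\ref{eq:dag-model-cut-out}) applies and gives $\mathcal{M}_G = V(\mathcal{I}_\text{CI}(G))\cap \mathit{PD}_V$, and similarly for $G'$. Combined with Theorem~\ref{thm:dsep}, which identifies the generators of $\mathcal{I}_\text{CI}(G)$ with almost-principal minors corresponding to d-separations, the equality $\mathcal{M}_G=\mathcal{M}_{G'}$ is equivalent to $G$ and $G'$ inducing the same family of d-separation relations $i \perp j \mid S$. Thus the theorem becomes a purely combinatorial statement: two acyclic digraphs have the same d-separation relations iff they share a skeleton and the same unshielded colliders.

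\emph{Step 2 (Easy direction: d-separation determines skeleton and unshielded colliders).} Two nodes $i,j$ are adjacent in $G$ iff no $S\subseteq V\setminus\{i,j\}$ d-separates them; indeed, an edge between $i$ and $j$ is itself a semi-walk with no internal node, hence active given any $S$, while if $i,j$ are non-adjacent one can take $S=\pa(i)\cup\pa(j)\setminus\{i,j\}$ to block every semi-walk between them. Consequently, identical d-separation relations force identical adjacencies. For unshielded colliders, let $i,j,k$ be such that $i$ and $k$ are each adjacent to $j$ but $i,k$ are non-adjacent. Then $i\to j\leftarrow k$ is an unshielded collider in $G$ iff some set $S$ d-separates $i$ and $k$ with $j\notin S$ (equivalently, $j$ is not in any d-separating set containing $\pa(i)\cup\pa(k)\setminus\{j\}$): if $j$ is a collider on the path $i - j - k$, including $j$ in the conditioning activates that path; if $j$ is a non-collider, excluding $j$ leaves it active. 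This property is expressed entirely in d-separation, so it transfers.

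\emph{Step 3 (Hard direction: skeleton $+$ unshielded colliders determine d-separation).} This is where the main obstacle lies. Suppose $G,G'$ have the same skeleton and the same unshielded colliders, and suppose $S\subseteq V$ d-connects $i$ and $j$ in $G$ via a semi-walk $\pi$. I would argue one can always choose $\pi$ to be a \emph{minimal} d-connecting path, meaning a path such that no chord shortens it while preserving activeness. The key lemma to establish is that in such a minimal path every consecutive triple $u-v-w$ along $\pi$ is unshielded in $G$: if $u$ and $w$ were adjacent, one could either use the chord directly (when $v$ is a non-collider that happens to be in $S$, or a collider not in $S$) or rebuild a shorter active path by invoking the adjacency $u-w$ and standard d-separation manipulations based on the orientation of that chord. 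Once $\pi$ is unshielded, the collider/non-collider status at each internal node is recorded by the unshielded-collider data that is common to $G$ and $G'$. Hence the same sequence of edges, interpreted in $G'$, remains active given $S$, showing $i$ and $j$ are d-connected in $G'$.

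The main difficulty is justifying the minimal-path reduction in Step 3, i.e.\ showing that shielded triples on an active path can always be bypassed to yield an active path whose consecutive triples are all unshielded. This is the standard Verma--Pearl lemma, and I would give it a careful case analysis on the orientations of the chord $u-w$ and the two path edges $u-v$, $v-w$, combined with the directed acyclicity of $G$ to rule out cyclic configurations. With that lemma in hand, the theorem follows by combining Steps 1--3.
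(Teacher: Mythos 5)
Your Step 1 matches exactly how the paper frames this theorem: by (\ref{eq:dag-model-cut-out}) together with Theorem~\ref{thm:dsep}, covariance equivalence of acyclic digraphs reduces to equality of d-separation relations, and the paper then simply cites Verma--Pearl and Frydenberg for the combinatorial equivalence rather than proving it. Your Step 2 is correct in substance, but note that your separating set $\pa(i)\cup\pa(j)\setminus\{i,j\}$ for non-adjacent $i,j$ is itself a small lemma rather than an observation: with the walk-based Definition~\ref{def:dsep}, an active walk would have to start with an edge out of $i$ and end with an edge out of $j$, so its collider nearest $i$ is a descendant of $i$ lying in $\pa(i)\cup\pa(j)$, and likewise near $j$; each option either creates a directed cycle outright or forces $i$ and $j$ to be ancestors of each other. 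That acyclicity argument should be spelled out.

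The genuine gap is Step 3, and it is not a technicality: the claim that shielded triples on an active path can always be bypassed so that one obtains an active path all of whose consecutive triples are unshielded is precisely the mathematical core of the Verma--Pearl theorem, and you state it and defer it (``I would give a careful case analysis''). It is also not a routine shortcutting argument: replacing $u\,\text{--}\,v\,\text{--}\,w$ by the chord can change the collider status of $u$ or $w$. For instance, if the segment is $a\to u\leftarrow v\to w$ with $v\notin S$, $u\in S$ an active collider, and the chord is $u\toblue w$, then on the shortened walk $u$ becomes a non-collider lying in $S$ and the shortcut is blocked; one must reroute rather than shorten, using the orientation of the chord and acyclicity to exclude the bad configurations. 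Until that induction (on the number of shielded triples, say) is carried out, the hard direction is unproved. Two further points to fix if you complete it: the paper's Definition~\ref{def:dsep} is walk-based (colliders must lie in $S$), whereas your Step 3 implicitly uses the path-based criterion with activation through descendants of colliders, so you should commit to one criterion and, if needed, prove their equivalence; and an alternative complete route, different from the path surgery, is Chickering's transformational argument, in which two acyclic digraphs with the same skeleton and the same unshielded colliders are connected by a sequence of covered-edge reversals, each of which manifestly preserves all d-separation relations.
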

  
We remark that it can also be decided in polynomial time whether two
digraphs that are not necessarily acyclic have the same d-separation
relations \cite{rich:dcgmarkov}.  When the graphs have directed cycles
then d-separation equivalence is necessary but not sufficient for
covariance equivalence; see, e.g., the example in \cite{drton:lrt}.

\section{Trek Separation}
\label{sec:trek-separation}

We now turn to the characterization of the vanishing of general minors
of the covariance matrices in a linear structural equation model.  Our
first example clarifies the importance of minors that are not almost
principal.

\begin{example}
  \label{ex:tetrad}
  If $G$ is the graph from Figure~\ref{fig:two-ivs} and
  Example~\ref{ex:two-ivs}, then $\mathcal{I}(G)$ is 
  generated by $\det(\Sigma_{12,34})$.  Such off-diagonal $2\times 2$
  minors are known as tetrads in the statistical literature; see
  \cite{MR2299716} and the references therein.
\end{example}

The tetrad representation theorem gives a combinatorial
characterization of the vanishing of any $2\times 2$ determinant
\cite{spirtes:2000}.  The theorem has been greatly generalized, and we
now have a full combinatorial understanding of when a minor of the
covariance matrix vanishes based on the notion of trek-separation
\cite{sullivant:2010}.  Moreover, the non-vanishing determinants can
be described precisely \cite{MR3044565}.

\begin{definition}
  Let $A,C,S_A,S_C\subseteq V$ be sets of nodes of the mixed graph
  $G=(V,D,B)$.  The pair $(S_A,S_C)$ \emph{trek-separates} $A$ and $C$
  if every trek from $i\in A$ to $j\in C$ intersects $S_A$ on its left
  side or $S_C$ on its right side.
\end{definition}
  
\begin{theorem}
  \label{thm:trek-separation}
  Let $G$ be a mixed graph.  Then the $A\times C$ submatrix of
  $\phi_G(\Lambda,\Omega)$ has generic rank $\le r$ if and
  only if there exist sets $S_A$ and $S_C$ with $|S_A|+|S_C|\le r$
  such that $(S_A,S_C)$ trek-separates $A$ and $C$.
\end{theorem}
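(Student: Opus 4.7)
The plan is to combine the trek rule with a Lindstr\"om--Gessel--Viennot style expansion of the minors of $\Sigma_{A,C}$, and then invoke a max-flow/min-cut duality to translate between trek-separators and rank. The factorization $\Sigma = (I-\Lambda)^{-T}\Omega(I-\Lambda)^{-1}$ expresses each entry of $\Sigma_{A,C}$ as $\sum_{\tau\in\mathcal{T}(i,j)}\tau(\Lambda,\Omega)$, with each trek $\tau$ splitting into a left directed path, a top contribution (either a variance $\omega_{kk}$ or a bidirected weight $\omega_{k\ell}$), and a right directed path.

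For sufficiency, assume $(S_A,S_C)$ trek-separates $A$ and $C$ with $|S_A|+|S_C|\le r$. Every trek $\tau$ from $A$ to $C$ meets $S_A$ on its left or $S_C$ on its right. Partition the treks by the node of $S_A$ closest to $A$ on the left side (if any); else by the node of $S_C$ closest to $C$ on the right side. Within each class, fixing the separator node $s$, the trek decomposes as the concatenation of a piece depending only on $s$ and its upstream neighborhood with a piece from $s$ into $A$ or $C$, producing a rank-one contribution to $\Sigma_{A,C}$. Summing over all classes writes $\Sigma_{A,C}$ as a sum of at most $|S_A|+|S_C|\le r$ rank-one matrices, so $\rank(\Sigma_{A,C})\le r$.

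For necessity, I would argue the contrapositive in two steps. First, a Menger-style max-flow/min-cut theorem on a flow network built from $G$---splitting each vertex into a left copy and a right copy joined by a unit-capacity edge, with directed edges and bidirected edges oriented so that no-sided-intersection of trek systems translates into vertex-disjointness of flows---shows that the minimum of $|S_A|+|S_C|$ over trek-separating pairs equals the maximum size of a system $\Pi:A\rightrightarrows C$ of treks with no sided intersection. Second, given such a system of size $r+1$, one shows that some $(r+1)\times(r+1)$ minor of $\Sigma_{A,C}$ is a nonzero polynomial: the Cauchy--Binet formula applied to the trek-factorization of $\Sigma_{A,C}$ expands the minor as a signed sum over trek systems between $(r+1)$-subsets of $A$ and $C$, and a sign-reversing involution cancels contributions from systems with sided intersection, leaving surviving terms that include the monomial associated to $\Pi$. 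Hence $\rank(\Sigma_{A,C})\ge r+1$ generically, contradicting the rank hypothesis.

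The main obstacle is the second step of the necessity direction. Constructing the sign-reversing involution requires a canonical choice of which sided intersection to ``swap'' across, and one must further check that the remaining no-sided-intersection systems contribute monomials that do not cancel among themselves but instead aggregate to a nonzero polynomial in the parameters. This is the heart of the argument in \cite{sullivant:2010} and requires careful bookkeeping with the top-node labels of treks and the signs arising from the Leibniz formula applied to the determinant.
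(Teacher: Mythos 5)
Your proposal is correct in outline, and for the hard direction it runs along the same track as the paper: Cauchy--Binet applied to the factorization of $\Sigma_{A,C}$, an LGV-type cancellation identifying which minors vanish identically, and Menger's theorem on a digraph with left/right copies of the nodes to convert ``every trek system has a sided intersection'' into the existence of a separating pair $(S_A,S_C)$. Where you genuinely differ is the easy direction: instead of deducing ``separation $\Rightarrow$ rank $\le r$'' from the same determinantal machinery, you classify each trek by the $S_A$-node nearest the source on its left side (else the $S_C$-node nearest the target on its right side) and factor each class as a product of a path sum into $A$ and a trek sum into $C$, writing $\Sigma_{A,C}$ as a sum of at most $|S_A|+|S_C|$ rank-one matrices. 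That decomposition is valid (the nearest-separator rule makes the splitting a bijection with independent constraints on the two pieces), it is closer to the argument in \cite{sullivant:2010} than to the survey's outline, and it even gives the rank bound for all parameter values rather than only generically. Two points of bookkeeping you should make explicit: the paper first subdivides bidirected edges and normalizes $\Omega$ to the identity, which reduces Cauchy--Binet to two factors, whereas your version as stated needs the three-factor expansion with minors of $\Omega$ (or the same reduction); and in the cyclic case treks are walks and the trek rule is a formal power series, so the rank-one factorization should be read either formally or on the open set where the series converges, with generic vanishing of the $(r+1)$-minors following because they are rational functions. The step you flag as the main obstacle---showing the surviving no-sided-intersection terms assemble into a nonzero polynomial---is precisely what the paper's proof outline also leaves to \cite{sullivant:2010} and, for graphs with directed cycles, to \cite{MR3044565}, so your proposal is at the same level of completeness as the paper's own argument there.
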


The theorem for acyclic mixed graphs is proven in
\cite{sullivant:2010}.  The cases with directed cycles are covered by
the results in \cite{MR3044565}.  We describe the ideas behind the proof.

\begin{proof}[Proof outline]
  The problem can be reduced to the case where $G$ is a digraph by
  subdividing bidirected edges.  For each edge $i\bi j\in B$ we
  introduce a new node $\{i,j\}$ and two edges $\{i,j\}\to i$ and
  $\{i,j\}\to j$.  The new digraph $G'$ thus has $|V|+|B|$ nodes and
  $|D|+2|B|$ edges.  If $G$ is the mixed graph from
  Figure~\ref{fig:iv:mixed}, then $G'$ is the digraph in
  Figure~\ref{fig:iv:dag}.  Every trek in $G$ corresponds to a trek in
  $G'$ where an edge $i\bi j$ in $G$ corresponds to
  $i\leftarrow \{i,j\}\to j$ in $G'$.  The entries of $(\phi_G)_{A,C}$
  and those of $(\phi_{G'})_{A,C}$ can be shown to admit the same set
  of relations. 

  In the sequel, assume that $G$ itself is a digraph.  Then
  $\mathit{PD}(B)$ contains diagonal matrices with positive entries.
  The rank of $\phi_G(\Lambda,\Omega)$ for
  $\Lambda\in\mathbb{R}^D_\mathrm{reg}$ and $\Omega\in\mathit{PD}(B)$
  is then the same as that of $\Sigma=\phi_G(\Lambda,I)$.

  To establish the claim, we may study the vanishing of the
  determinants of square submatrices.  So assume that $|A|=|C|=r+1$.
  The Cauchy-Binet formula gives that
  \begin{equation}
    \label{eq:cauchy-binet}
    \det\left( \Sigma_{A\times C}\right) \;=\; \sum_{E} \det\left( \left[
        (I-\Lambda)^{-1}\right]_{E\times A}\right) \det\left(
      \left[(I-\Lambda)^{-1}\right]_{E\times C}\right),
  \end{equation}
  where the sum is over subsets $E\subseteq V$ of cardinality
  $r+1$.  By the Lindstr\"om-Gessel-Viennot lemma for general
  digraphs, it holds that
  \begin{equation}
    \label{eq:LGV}
    \det\left( \left[
        (I-\Lambda)^{-1}\right]_{E\times
        A}\right) \;=\; 0 \quad \text{for all} \
    \Lambda\in\mathbb{R}^D_\mathrm{reg}, 
  \end{equation}
  if and only if no system of $r+1$ directed paths from $E$ to $A$ is
  vertex-disjoint.  Applying this to all terms
  in~(\ref{eq:cauchy-binet}) shows that
  $\det\left( \Sigma_{A\times C}\right)$ vanishes if and only if every
  system of treks from $A$ to $C$ has a sided intersection.  Here, an
  intersection on the left side of a trek corresponds to the vanishing
  of determinants of the matrices
  $ \left[ (I-\Lambda)^{-1}\right]_{E\times A}$ and, similarly,
  interesections on the right side are related to the determinants of
  the matrices $\left[(I-\Lambda)^{-1}\right]_{E\times C}$.  The
  characterization by sided intersections in trek systems can be
  turned into the claimed statement about trek-separation via Menger's
  theorem.  To account for the distinct role played by the left and
  the right sides of the treks, Menger's theorem is applied in a
  digraph $\tilde G$ that results from duplicating the nodes and edges of
  $G$.  Each node and each edge of $G$ has a left and a right side
  version in $\tilde G$.  Menger's theorem yields a cut set $S$ of
  cardinality $|S|\le r$ in $\tilde G$.  Partitioning $S$ according to
  the left and right side yields the pair $(S_A,S_C)$ for the claimed
  trek-separation.
\end{proof}

\begin{example}
  \label{ex:trek-sep1}
  When $G$ is the the graph from Figure~\ref{fig:two-ivs}, then the
  submatrix $[\phi_G(\Lambda,\Omega)]_{12,34}$ has generic rank 1;
  recall Example~\ref{ex:tetrad}.  Theorem~\ref{thm:trek-separation}
  shows that the rank is at least 1 because $(\emptyset,\emptyset)$
  does not trek-separate $\{1,2\}$ and $\{3,4\}$.  For instance, there
  is the trek $1\to 3$.  That the rank is no larger than 1 follows
  from $(\emptyset,\{3\})$ trek-separating $\{1,2\}$ and $\{3,4\}$.
  For instance, the node $3$ is on the right side of the two treks
  $1\to 3$ and $2\to 3\to 4$.
\end{example}

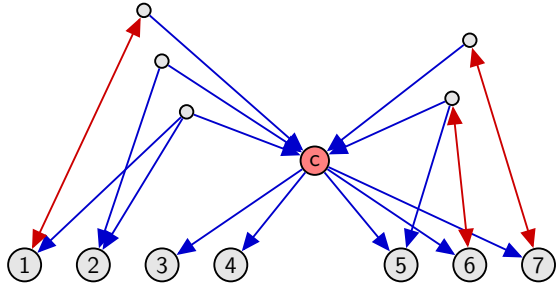
\begin{figure}[t]
  \centering
    \scalebox{0.9}{
      \begin{tikzpicture}[->,>=triangle 45, shorten >=0pt, auto,thick,
        main node/.style={circle,inner sep=2pt,
          fill=gray!20,draw,font=\sffamily}]

        \node[main node] (1) {1}; \node[main node] (2) [right=.5cm of
        1] {2}; \node[main node] (3) [right=.5cm of 2] {3}; \node[main
        node] (4) [right=.5cm of 3] {4}; \node[main node] (5)
        [right=2cm of 4] {5}; \node[main node] (6) [right=.5cm of 5]
        {6}; \node[main node] (7) [right=.5cm of 6] {7};

        \node[main node] (8) [above right=3.5cm and 1.5cm of 1] {};
        \node[main node] (9) [above right=2.75cm and .75cm of 2] {};
        \node[main node] (10) [above right=2cm and 0.1cm of 3] {};
        \node[main node] (11) [above left=2.2cm and 0.0cm of 6] {};
        \node[main node] (12) [above=2.95cm of 6] {};

        \node[main node,fill=red!50] (c) [above right=1.2cm and .9cm
        of 4] {c};

        \path[color=black!20!blue,every
        node/.style={font=\sffamily\small}] (8) edge (c) (9) edge (c)
        (9) edge (2) (10) edge (c) (10) edge (2) (10) edge (1) (c)
        edge (3) (c) edge (4) (c) edge (5) (c) edge (6) (c) edge (7)
        (11) edge (5) (11) edge (c) (12) edge (c);
        \path[color=black!20!red,<->,every
        node/.style={font=\sffamily\small}] (8) edge node {} (1) (12)
        edge node {} (7) (11) edge node {} (6);
      \end{tikzpicture}
    } \smallskip
  \caption{A `spider graph' with $\{1,2,3,4\}\times \{5,6,7\}$
    submatrix of rank two.}
  \label{fig:spider}
\end{figure}

\begin{example}
  \label{ex:trek-sep2}
  What is the generic rank of the $A\times C$ of
  $\phi_G(\Lambda,\Omega)$ when $G$ is the `spider graph' from
  Figure~\ref{fig:spider}, $A=\{1,2,3,4\}$ and $C=\{5,6,7\}$?  The
  node $c$ is on every trek between $A$ and $C$.  It follows that
  $(\{c\},\{c\})$ trek-separates $A$ and $C$ and thus the rank is no
  larger than two.  In fact, the rank is equal to two.  Consider the
  two treks
  \begin{align*}
    &\pi_1:1\bi \circ \toblue c \toblue 5,\\
    &\pi_2:3\otblue c \otblue \circ \bi 6.
  \end{align*}
  They have only node $c$ in common but $c\in\rhs{\pi_1}$ and
  $c\in\lhs{\pi_2}$.  Hence, a pair of trek-separating sets must use at
  least two nodes.
\end{example}

Trek separation solves the problem of characterizing the vanishing of
determinants.  However, there is currently no efficient method for
deciding when two mixed graphs are trek separation equivalent.


\section{Verma Constraints}
\label{sec:verma-constraints}

Much interesting ground lies beyond determinants of the covariance
matrix.  We content ourselves with two examples concerning a relation
first presented in
\cite{VermaPearl90}.  

\begin{example}
  \label{ex:verma-decompose}
  Let $G$ be the graph from Figure~\ref{fig:recap}.  The graph has no
  trek-separation relations as can be checked with the package
  `GraphicalModels' for {\sc Macaulay2} \cite{MR3073716}.
  The commands
  \begin{small}
\begin{verbatim}
needsPackage "GraphicalModels";
G = mixedGraph(digraph {{1,{2,3}},{2,{3}},{3,{4}}}, bigraph {{2,4}});
R = gaussianRing G;
trekIdeal(R,G)
\end{verbatim}
  \end{small}
  return the zero ideal.  Issuing the command
  \begin{small}
\begin{verbatim}
gaussianVanishingIdeal R 
\end{verbatim}
  \end{small}
  shows that $\mathcal{I}(G)$ is generated by
  \begin{multline}
    \label{eq:f-verma}
    f_\mathrm{Verma}=
    \sigma_{11} \sigma_{13} \sigma_{22} \sigma_{34}-\sigma_{11}
    \sigma_{13} \sigma_{23} \sigma_{24}-\sigma_{11} \sigma_{14}
    \sigma_{22} \sigma_{33}+\sigma_{11} \sigma_{14}
    \sigma_{23}^2\\
    -\sigma_{12}^2 \sigma_{13} \sigma_{34}
    +\sigma_{12}^2
    \sigma_{14} \sigma_{33}+\sigma_{12} \sigma_{13}^2
    \sigma_{24}-\sigma_{12} \sigma_{13} \sigma_{14} \sigma_{23}. 
  \end{multline}
  Clearly, $f_\mathrm{Verma}$ is not a
  determinant.  
  Its vanishing can be explained as follows.  Decompose $G$ into its
  mixed components.  The largest component $G[\{2,4\}]$ is depicted in
  Figure~\ref{fig:verma-comp}.  Appealing to
  Theorem~\ref{thm:tian-decomposition}, there is a rational function
  $\tau_{\{2,4\}}:\mathcal{M}_G\to \mathcal{M}_{G'}$, so
  $\tau_{\{2,4\}}$ is the covariance matrix for $G[\{2,4\}]$.  In
  $G[\{2,4\}]$, there is no trek from node 1 to node 4.  Hence, the
  $(1,4)$ entry of $\tau_{\{2,4\}}(\Sigma)$ is zero.  Clearing the
  denominator yields $f_\mathrm{Verma}(\Sigma)=0$ for
  $\Sigma\in\mathcal{M}_G$. 
\end{example}

\begin{figure}[t]
  \centering
            \scalebox{0.9}{
      \begin{tikzpicture}[->,>=triangle 45, shorten >=0pt, auto,thick,
        main node/.style={circle,inner
          sep=2pt,fill=gray!20,draw,font=\sffamily}]
        
        \node[main node] (1) {1}; \node[main node] (2) [below
        right=1cm and 1cm of 1] {2}; \node[main node] (3) [right=
        2cm of 1] {3}; \node[main node] (4) [below 
        right=1cm and 1cm of 3] {4};
        
        \path[color=black!20!blue,every
        node/.style={font=\sffamily\small}] (1) edge (2)  (3) edge (4);
        \path[color=black!20!red,<->,every
        node/.style={font=\sffamily\small}] (2) edge (4);
      \end{tikzpicture}
      

      }
  \caption{The largest mixed component of the Verma graph.}
  \label{fig:verma-comp}
\end{figure}
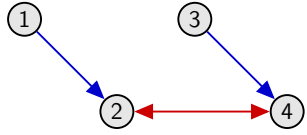

The example suggests that new relations can be discovered by
decomposing the graph and studying d-separation or trek-separation
relations in the mixed components.  The matter is more delicate, however.

\begin{example}
  \label{ex:verma-sink-decompose}
  In order to prevent decomposition of the Verma graph, add a fifth
  node and bidirected edges such that $(V,B)$ becomes connected.
  Specifically, consider the graph $G$ from
  Figure~\ref{fig:verma-sink-decomp}.  The new graph $G$ is simple and
  acyclic and, thus, has expected dimension $13$.  Because the nodes 2 and 5
  are d-separated by 1, we have
  $\sigma_{12}\sigma_{15}-\sigma_{11}\sigma_{25}\in\mathcal{I}(G)$.
  Other relations must exist, and indeed a Gr\"obner basis computation
  shows that
  \[
  \mathcal{I}(G) \;=\; \langle
  \sigma_{12}\sigma_{15}-\sigma_{11}\sigma_{25},\; f_\mathrm{Verma}
  \rangle : \sigma_{11}^\infty
  \]
  with $f_\mathrm{Verma}$ being the polynomial
  from~(\ref{eq:f-verma}).  The fact that
  $f_\mathrm{Verma}\in\mathcal{I}(G)$ is explained by
  Theorem~\ref{thm:ancestral-subgraphs}.  Since no directed edge of
  $G$ has a tail at node 5, the theorem allows us to consider the
  subgraph induced by the set of nodes $\{1,2,3,4\}$.  We are back in
  Example~\ref{ex:verma-decompose} and decomposition yields
  $f_\mathrm{Verma}$ as a relation.
\end{example}

It is clear that more contrived examples can be constructed in which
d-/trek-separation applies only after several rounds of alternating between
graph decomposition and forming a subgraph induced by an ancestral
set.  An overview of what is known about such a recursive approach can
be found in \cite{EvansEtAl14}, where the focus is on non-linear
models and manipulation of probability density functions.

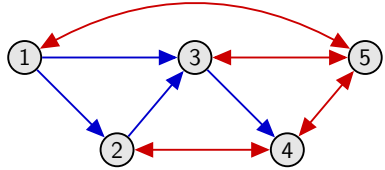
\begin{figure}[t]
  \centering 
  \scalebox{0.9}{
    \begin{tikzpicture}[->,>=triangle 45, shorten >=0pt, auto,thick,
      main node/.style={circle,inner
        sep=2pt,fill=gray!20,draw,font=\sffamily}]
      
      \node[main node] (1) {1}; \node[main node] (2) [below
      right=1cm and 1cm of 1] {2}; \node[main node] (3) [right=
      2cm of 1] {3}; \node[main node] (4) [below right=1cm and
      1cm of 3] {4}; \node[main node] (5) [right=2cm of 3] {5}; 
      
      \path[color=black!20!blue,every
      node/.style={font=\sffamily\small}] (1) edge  (2) (1) edge
      (3) (2) edge
      (3) (3) edge
      (4);
      \path[color=black!20!red,<->,every
      node/.style={font=\sffamily\small}] (2) edge (4)
      (5) edge (3) (5) edge (4);
      \path[color=black!20!red,<->,bend right,every
      node/.style={font=\sffamily\small}]  (5) edge (1);
    \end{tikzpicture}
  }
  \caption{A graph that can be decomposed only after
    removing the sink node 5.}
  \label{fig:verma-sink-decomp}
\end{figure}

\addtocontents{toc}{\vspace{.3cm}}{}{}
\section{Conclusion}
\label{sec:discussion}

Linear structural equation models have covariance matrices with rich
algebraic structure.  As we showed in
Section~\ref{sec:questions-interest}, statistical considerations
motivate a host of different algebraic problems.  In this review, we
focused on methods for parameter identification as well as relations
among covariances.  While much progress has been made, and continues
to be made \cite{fink:2016}, we have encountered a plethora of open
problems of algebraic and combinatorial nature.

In our review, we focused exclusively on linear and Gaussian models.
As noted repeatedly, many of the questions have interesting
generalizations to non-linear or non-Gaussian models.  In particular,
in settings with discrete random variables, as considered in
\cite{evans:2015:allconstraints,evans:2015}, algebra and Gr\"obner
basis techniques continue to be useful \cite{oberwolfach}.  Similarly,
many additional challenges arise in models with explicit latent
variables, which motivate studying the map $\phi_G$ when projected
onto a principal submatrix (recall
Section~\ref{sec:induced-subgraphs}).


\bibliographystyle{amsplain}
\bibliography{SEM_lectures}

\end{document}